\newtheorem{lemma}{Lemma}
\def\eps{\varepsilon}
\def\ttpi{\pi^{*}}
\def\dx{\text{d}x}
\def\dy{\text{d}y}
\pgfplotsset{%
  every axis/.append style={%
    y label style={at={(0.1,1.0)},anchor=south west,rotate=-90,color=black},
    yminorgrids
  }
}
\pgfplotsset{compat=1.9}
\date{June 04, 2019}
\let\oldbibliography\thebibliography
\renewcommand{\thebibliography}[1]{%
  \oldbibliography{#1}%
  \setlength{\itemsep}{0pt}%
}
\newif\ifrosensamples
\begin{document}

\title{Approximation and sampling of multivariate probability distributions in the tensor train decomposition}
\author{Sergey Dolgov\footnote{University of Bath, United Kingdom. {\tt s.dolgov@bath.ac.uk}}, Karim Anaya-Izquierdo\footnote{University of Bath, United Kingdom. {\tt K.Anaya-Izquierdo@bath.ac.uk}}, Colin Fox\footnote{University of Otago, New Zealand. {\tt colin.fox@otago.ac.nz}}, and Robert Scheichl\footnote{University of Heidelberg, Germany. {\tt r.scheichl@uni-heidelberg.de}}}

\maketitle

\begin{abstract}
General multivariate distributions are notoriously expensive to sample from, particularly the high-dimensional posterior distributions in PDE-constrained inverse problems.
This paper develops a sampler for arbitrary continuous multivariate distributions that is based on low-rank surrogates in the tensor-train format, a methodology that has been exploited for many years for scalable, high-dimensional density function approximation in quantum physics and chemistry.
We build upon recent developments of the cross approximation algorithms in linear algebra to construct a tensor-train approximation to the target probability density function using a small number of function evaluations.
For sufficiently smooth distributions the storage required for accurate tensor-train approximations is moderate, scaling linearly with dimension.
In turn, the structure of the tensor-train surrogate allows sampling by an efficient conditional distribution method since marginal distributions are computable with linear complexity in dimension.
Expected values of non-smooth quantities of interest, with respect to the surrogate distribution, can be estimated using transformed independent uniformly-random seeds that provide Monte Carlo quadrature, or transformed points from a quasi-Monte Carlo lattice to give more efficient quasi-Monte Carlo quadrature.
Unbiased estimates may be calculated by correcting the transformed random seeds using a Metropolis--Hastings accept/reject step, while the quasi-Monte Carlo quadrature may be corrected either by a control-variate strategy, or by importance weighting.
We show that the error in the tensor-train approximation propagates linearly into the Metropolis--Hastings rejection rate and the integrated autocorrelation time of the resulting Markov chain; thus the integrated autocorrelation time may be made arbitrarily close to 1, implying that, asymptotic in sample size, the cost per effectively independent sample is one target density evaluation plus the cheap tensor-train surrogate proposal that has linear cost with dimension. These methods are demonstrated in three computed examples: fitting failure time of shock absorbers; a PDE-constrained inverse diffusion problem; and sampling from the Rosenbrock distribution. The delayed rejection adaptive Metropolis (DRAM) algorithm is used as a benchmark.
In all computed examples, the importance-weight corrected quasi-Monte Carlo quadrature performs best, and is more efficient than DRAM by orders of magnitude across a wide range of approximation accuracies and sample sizes. Indeed, all the methods developed here significantly outperform DRAM in all computed examples.
\end{abstract}

\textit{Keywords:} multivariate distributions, surrogate models, tensor decomposition, MCMC, importance weights


\section{Introduction}
We present an algorithm for efficient MCMC when the target distribution is a continuous multivariate distribution with known, tractable probability density function (PDF) $\pi(x)$ defined for $x$ in a region in $\mathbb{R}^d$. Beyond a fixed function-approximation phase, that has cost that scales linearly with dimension $d$, independent draws from $\pi$ cost (a fraction more than) one function evaluation per independent sample. We give a basic form of the algorithm that generates random samples from $\pi$, and variants that allow efficient quadrature using quasi-Monte Carlo constructions and/or importance weighting.

There are currently few general-purpose options for sampling from multivariate distributions with no special form, particularly if one is seeking a black-box sampler that does not require case-specific tuning. We commonly encounter such distributions as the posterior distribution in a Bayesian analysis of a nonlinear inverse problem~\cite{FN1997,FHC2013,scheichl-mlmcmc-2015} (also see the example in Section~\ref{sec:ff}), or as the marginal posterior distribution over hyperparameters in a linear-Gaussian inverse problem~\cite{FN2016}, see also~\cite{NCF2017}. This work is motivated by the desire to compute inference in those examples, though the samplers and quadrature methods we present here are applicable to arbitrary continuous distributions, that could be non-Gaussian, or multi-modal, and unnormalized; e.g., see the example in Section~\ref{sec:sae}.
In target applications, the aim of sampling is often to implement Monte Carlo integration to compute summary statistics of the posterior distribution over an unobserved quantity of interest (QoI). For applications in inverse problems, the state variable is typically high-dimensional and thus requiring computation of high-dimensional quadratures~\cite{stuart-bayes-2010}, even when the QoI is low-dimensional.

Efficient black-box samplers exist for some special classes of distributions. Most notable amongst multivariate distributions are multivariate normal (MVN) distributions, with fixed covariance or precision matrix, for which efficient, automatic sampling is available using stochastic variants of efficient algorithms for solving systems of equations in the covariance or precision matrix; methods based on \emph{direct} solvers, using Cholesky factoring, can be found in~\cite{Rue01, RueHeld05}, while more recently samplers based on accelerated \emph{iterative} solvers have been developed; see~\cite{FoxParker2017} and references therein. For non-Gaussian distributions, virtually all samplers are variants of Metropolis--Hastings (MH) MCMC with a random-walk proposal, of which there are many variants~\cite{handbook2011}. These algorithms are geometrically convergent, at best, so can be very slow for our target applications. Two black-box versions are the delayed-acceptance adaptive Metropolis  (DRAM) \cite{Haario-DRAM-2006} and the
t-walk~\cite{twalk}. Both of these algorithms require multiple evaluations of the target PDF per effectively independent output sample, with that number growing roughly linearly or worse with dimension, even for simple distributions such as MVN (see~\cite{NCF2017} for the cost of these algorithms).

Computational sampling from \emph{univariate} distributions is effectively a solved problem due to developments of the adaptive rejection sampler (ARS)~\cite{gilks1992adaptive}, such as independent doubly adaptive rejection Metropolis sampling (IA$^2$RMS)~\cite{MRL2015}. These algorithms approximate the univariate PDF using simple functions, with the approximation improving (adaptively) as the algorithm progresses to achieve efficient sampling. The ARS, that is restricted to log-convex PDFs, builds a piece-wise linear upper bound to the log of the PDF, hence bounds the PDF, to give an efficient proposal in a rejection sampler\footnote{Meyer \emph{et al.}~\cite{MeyerCaiPerron2008} used piecewise quadratic approximations to the log PDF giving piecewise Gaussian approximated PDF.}.
The IA$^2$RMS has no restriction on the PDF, and uses a sequence of simple function approximations to the PDF or log PDF,
such as piecewise-constant or piecewise-linear approximations, that converge in distribution to the PDF as the algorithm progresses.
Sampling from these approximations is easy in this univariate case, whether approximating the PDF or log PDF, using the inverse cumulative transformation method \cite{devroye-rvgen-1986,johnson1987multivariate,hormann-rvgen-2004}, with samples providing independence proposals to a Metropolis-Hastings accept/reject step that ensures the correct equilibrium distribution. Distributional convergence of the approximation implies that, asymptotic in sample size, just one PDF evaluation is required per independent sample\footnote{The Matlab package for IA$^2$RMS available at \url{http://a2rms.sourceforge.net/} is far more expensive than this minimal theoretical cost, besides not being robust.}.

The sampler developed here is inspired, to some extent, by IA$^2$RMS, in that it uses function approximation methods to approximate the multivariate PDF in a way that then allows cheap simulation from the approximation. Specifically, we use an interpolation in
tensor train (TT) representation, that may be made arbitrarily accurate, with sampling via the \emph{conditional distribution method} that is the multivariate extension of inverse cumulative transformation sampling for univariate distributions~\cite{johnson1987multivariate}. The conditional distribution method requires computing integrals of the multivariate PDF $\pi(x_1,\ldots,x_d)$, over subsets of variables $x_k,\ldots,x_d$ for $k=2,\ldots,d$, in order to obtain univariate marginal-conditional distributions. \emph{Per se}, this problem is as difficult as the original quadrature. By using the TT decomposition \cite{osel-tt-2011}, this integration can be performed efficiently, and each univariate marginal-conditional distribution can then be easily sampled using its inverse cumulative distribution function (CDF). Since the inverse cumulative transform is isoprobabilistic, the resulting samples are \emph{exact} for the interpolated probability tensor, which is however an \emph{approximation} to the original target PDF.
We provide bounds on the sampling error based on the approximation errors of the TT decomposition and discretization, and thus are able to trade accuracy for compute time.

An accurate approximation to the PDF allows the almost-exact samples to be used directly, while a less expensive approximation may be used to produce  independence proposals for a MH accept/reject step that `corrects' the distribution. The conditional distribution sampler may also be seeded with quasi-Monte Carlo points in the unit cube to implement quasi-Monte Carlo quadrature, that is corrected by a multi-level MCMC scheme, or by importance weighting. These variants are discussed in Section~\ref{sec:sampalg}. We find that the combination of quasi-Monte Carlo seed points combined with importance-weighted quadrature gives the best performance in computed examples.

The attraction of approximating the PDF in TT format is that the computational cost of the construction, the storage requirements, and the operations required for conditional distribution method sampling from the distributional approximation all scale \emph{linearly} with dimension; see Section~\ref{sec:ttdecomp}. In contrast, direct calculation or  na\"{\i}ve representations lead to \emph{exponential} cost for each of these tasks. This is a remarkable feature of the TT representation, and is why the recent introduction of low-rank hierarchical tensor methods, such as TT~\cite{ot-ttcross-2010,Os-mvk2-2011,osel-tt-2011,osel-constr-2013}, is a significant development in scientific computing for multi-dimensional problems.

Thus, the basic sampler we present here differs from IA$^2$RMS in two important aspects (beyond being able to handle multivariate distributions): we approximate the PDF and not the log PDF, and the sampler is not adaptive. The PDF is approximated because operations available on the TT representation, that have cost that scales linearly with dimension, include those required for performing the conditional distribution sampling, see Section~\ref{sec:ttdecomp}, while it is not clear how to perform sampling when the log PDF is approximated in the multivariate case. Further, current methods for TT representation do not include convenient and cheap schemes for updating a TT representation using a single new evaluation. Hence the algorithm we present consists of two steps; in a setup phase the TT approximation to $\pi(\cdot)$ is constructed, then that fixed approximation is used to generate samples. Hence, unlike the univariate samplers mentioned above, the TT approximation and samplers presented here are restricted to distributions with bounded, known support. While it is simple to define coordinate transformations $\mathbb{R}\mapsto[0,1]$ to represent a distribution on the (bounded) unit cube, efficient sampling still requires locating the appreciable support of the distribution; indeed, that is often a significant task when performing sampling. We do not consider such transformations here. Despite this restriction the method advances sample-based inference in some problems of substantial interest, as shown in the computed examples in Section~\ref{sec:ne}.

Approximation of the multivariate target distribution can be recommended for the following two cases:
First, the quantity of interest may be very poorly representable in the TT format,
and hence direct tensor product integration of the QoI, as suggested in \cite{Eigel-bayes-2017}, is not possible.
The most remarkable example is the indicator function, which occurs in the computation of the probability of an event.
If the jump of the indicator function is not aligned to the coordinate axes, the cost of its TT approximation might grow exponentially in the number of variables. Then, Monte Carlo quadrature becomes the only possibility, with the quadrature error depending on the particular distribution of the samples. When the target density function admits a TT approximation with a modest storage, the cumulative transform method can produce optimally distributed samples at a low cost.
Secondly, even when a fast growth of the TT storage prevents accurate computation of the density function, the TT-surrogate distributed
samples can still be used as proposals in the MH algorithm, or with importance weighting. Even a crude approximation to the PDF with $10\%$ error can produce the acceptance rate of $90\%$ and the integrated autocorrelation time of $1.2$, which is close enough to the best-possible practical MCMC. The relationship between approximation error and acceptance rate is formalized in Section~\ref{sec:ttmcmc}.

The paper is structured as follows: In Section~\ref{sec:cdm} we review the conditional sampling method used to sample from the multivariate TT-interpolated approximation. Some background on the TT decomposition is presented in Section~\ref{sec:ttdecomp}. A Metropolised algorithm that uses the TT surrogate for sampling from the target distribution is presented in Section~\ref{sec:sampalg}, as well as methods for unbiased quadrature that utilize a two-level algorithm, importance weighting, and quasi-Monte Carlo seed points. Several numerical examples are presented in Section~\ref{sec:ne}: Section~\ref{sec:sae} shows posterior estimation of a shock absorber failure probability; Section~\ref{sec:rf} demonstrates efficient sampling when the Rosenbrock function is the log target density, that is a synthetic `banana-shaped' PDF that presents difficulties to random-walk MCMC samplers; and Section~\ref{sec:ff} demonstrates posterior inference in a classical inverse problem in subsurface flow. In each of the numerical examples, scaling for the TT-based sampling and quadrature is shown, with comparison to DRAM \cite{Haario-DRAM-2006}, as well as (in Section~\ref{sec:ff}) to direct quasi-Monte Carlo quadrature.

\section{Conditional distribution sampling method}
\label{sec:cdm}

The \emph{conditional distribution method} \cite{devroye-rvgen-1986,johnson1987multivariate,hormann-rvgen-2004} reduces the task of generating a $d$-dimensional random vector into a sequence of $d$ univariate generation tasks.

Let $\left(X_1,\ldots,X_d\right)$ be a continuous random vector with a probability density function $\pi(x_1,\ldots,x_d)$.
To simplify the presentation, we assume in this section that $\pi$ is normalized.
The density function can be written as a product of conditional densities,
$$
\pi(x_1,\ldots,x_d) = \pi_1(x_1) \pi_2(x_2|x_1) \cdots \pi_d(x_d | x_1\ldots,x_{d-1}),
$$
where $\pi_k(x_k| x_1\ldots,x_{k-1})$ is a conditional density given by
\begin{equation}
\pi_k(x_k| x_1\ldots,x_{k-1}) = \frac{p_k(x_1,\ldots,x_k)}{p_{k-1}(x_1\ldots,x_{k-1})},
\label{eq:pi_k}
\end{equation}
in terms of the marginal densities,
\begin{equation}
 p_k = \int \pi(x_1, \ldots, x_{k-1}, x_k, x_{k+1},\ldots,x_d) \dx_{k+1} \cdots \dx_d,
\label{eq:p_k}
\end{equation}
where $k=1,\ldots,d$.
To simplify the notation we set $p_{0}=1$.
The conditional distribution method then generates  $\left(x_1,\ldots,x_d\right)\sim \pi$ by sampling from each of the univariate conditional densities in turn:
\begin{algorithmic}
\For{$k=1,2,\ldots,d$}
\State Generate $x_k\sim\pi_k(x_k| x_1\ldots,x_{k-1})$.
\EndFor
\end{algorithmic}
This follows by straightforward manipulation of the definitions of marginal and conditional distributions.

To generate the univariate samples in the algorithm above, we use the inverse cumulative transformation method. Thus, our algorithm coincides with the inverse Rosenblatt transformation~\cite{rosenblatt-1952} from the $d$-dimensional unit cube to the state-space of $\pi$. The standard conditional distribution method uses independent samples distributed uniformly in the unit cube as seeds for the transformation to produce independent draws from $\pi$. This generalizes the inverse cumulative transformation method for univariate distributions. Later, we will also use quasi-random points to implement quasi-Monte Carlo quadrature for evaluating expectations with respect to $\pi$.

When the analytic inverse of each univariate cumulative distribution function is not available, a straightforward numerical procedure is to discretize the univariate density on a grid, with approximate sampling carried out using a polynomial interpolation.
In that case, the normalization, i.e., the denominator in~\eqref{eq:pi_k}, is not necessary as normalization of the numerical approximation is evaluated, allowing sampling from an un-normalized marginal density~\eqref{eq:p_k}, directly.

The main difficulty with the conditional distribution method for multi-variate random generation is obtaining all necessary marginal densities, which requires the high-dimensional integral over $x_{k+1}\ldots x_d$ in \eqref{eq:p_k}. In general, this calculation can be extremely costly. Even a simple discretization of the argument of the marginal densities~\eqref{eq:p_k}, or the conditional-marginal densities~\eqref{eq:pi_k}, leads to exponential cost with dimension.

To overcome this cost, we \emph{precompute} an approximation of $\pi(x_1,\ldots,x_d)$ in a compressed representation that allows fast computation of integrals in \eqref{eq:p_k}, and subsequent sampling from the conditionals in \eqref{eq:pi_k}.
In the next sections, we introduce the TT decomposition and the related TT-cross algorithm \cite{ot-ttcross-2010} for building a TT approximation to $\pi$.
Moreover, we show that the separated form of the TT representation allows an efficient integration in~\eqref{eq:p_k}, with cost that scales \emph{linearly} with dimension.

\section{TT approximation of the target distribution}
\label{sec:ttdecomp}

Tensor decompositions trace back to the low-rank skeleton decompositions of matrices, which can in turn be computed by the singular value decomposition (SVD).
Any matrix $P \in \mathbb{R}^{n\times m}$ (e.g. a bi-variate discrete distribution) admits a SVD $P = U\Sigma V^\top$,
where $U,V$ are orthonormal matrices of singular vectors, and $\Sigma$ is a diagonal matrix of nonnegative singular values.
If the matrix is \emph{low-rank}, $r:=\mathrm{rank}~P<\min(m,n)$, the bottom right corner of $\Sigma$ is zero,
so we can \emph{truncate} the SVD to $U_r \Sigma_r V_r^\top$,
where $U_r,V_r$ contain only the first $r$ columns, and $\Sigma_r$ contains only the principal $r\times r$ submatrix.
However, we can also \emph{approximate} the given matrix $P$ by a truncated decomposition of lower rank; the Eckart-Young theorem \cite{gvl-matcomp-2013} ensures the optimality of the rank-$r$ SVD approximation among all possible rank-$r$ approximations.
Naturally, $U_r$ and $V_r$ contain only $(n+m)r$ elements in contrast to $nm$ elements in $P$.
This process can be extended to build low-rank decompositions of multivariate distributions, which we will describe next.

\subsection{Interpolated TT decomposition}
Throughout the paper, we approximate the target PDF by an interpolated TT decomposition \cite{osel-tt-2011},
\begin{equation}
\begin{split}
& \pi(x_1,\ldots,x_d) \approx \tilde\pi(x_1,\ldots,x_d) \\
& = \sum_{\alpha_0,\ldots,\alpha_d=1}^{r_0,\ldots,r_d}\pi^{(1)}_{\alpha_0,\alpha_1}(x_1)\pi^{(2)}_{\alpha_1,\alpha_2}(x_2) \cdots \pi^{(d)}_{\alpha_{d-1},\alpha_d}(x_d),
\end{split}
\label{eq:tt}
\end{equation}
that is a sum of products of the univariate functions $\pi^{(k)}_{\alpha_{k-1},\alpha_k}(x_k)$, $k=1,2,\ldots,d$ indexed by $\alpha_k=1,\ldots,r_k$.
The $r_k$, $k=0,\ldots,d$, are called \emph{TT ranks}, with $r_0=r_d=1$ (because $\pi$ is scalar valued) but $r_1,\ldots,r_{d-1}$ can be larger. The efficiency of this representation relies on the TT ranks being bounded by some (smallish) number $r$, as discussed later.

The TT decomposition natively represents a tensor, or $d$-dimensional array of values. The function approximation \eqref{eq:tt} is obtained by first approximating the tensor that results from discretizing the PDF $\pi(x_1,\ldots,x_d)$ by collocation on a tensor product of univariate grids. Let $x_k^{i_k} \in \mathbb{R}$, with $i_k=1,\ldots,n_k$ and $x_k^{1} < \cdots < x_k^{n_k}$, define independent univariate grids in each variable, and let $\hat \pi(i_1,i_2,\ldots,i_d) = \pi(x_1^{i_1},x_2^{i_2},\ldots,x_d^{i_d})$. The TT representation is
\begin{equation}
\begin{split}
 & \hat \pi(i_1,i_2,\ldots,i_d) \\
 & =  \sum_{\alpha_0,\ldots,\alpha_d=1}^{r_0,\ldots,r_d}\hat\pi^{(1)}_{\alpha_0,\alpha_1}(i_1)\hat\pi^{(2)}_{\alpha_1,\alpha_2}(i_2) \cdots \hat\pi^{(d)}_{\alpha_{d-1},\alpha_d}(i_d)
\end{split}
 \label{eq:ttdisc}
\end{equation}
with \emph{TT blocks} $\hat\pi^{(k)}$. Each TT block is a collection of $r_{k-1}r_k$ vectors of length $n_k$, i.e., $\hat \pi^{(k)}(i_k) = \pi^{(k)}(x_k^{i_k})$ is a three-dimensional tensor of size $r_{k-1} \times n_k \times r_k$. If we assume that all $n_k \le n$ and $r_k \le r$ for some uniform bounds $n,r \in \mathbb{N}$, the storage cost of \eqref{eq:ttdisc} can be estimated by $dnr^2$ which is linear in the number of variables. In contrast, the number of elements in the \emph{tensor} of nodal values $\hat\pi(i_1,\ldots,i_d)$ grows exponentially in $d$ and quickly becomes prohibitively large with increasing $d$.

The continuous approximation of $\pi$~\eqref{eq:tt} is given by a piecewise polynomial interpolation of nodal values, or TT blocks.
For example, in the linear case we have
$$
\pi^{(k)} = \frac{x_k - x_k^{i_k}}{x_k^{i_k+1} - x_k^{i_k}} \cdot \hat \pi^{(k)}(i_k+1) + \frac{x_k^{i_k+1} - x_k}{x_k^{i_k+1} - x_k^{i_k}} \cdot \hat \pi^{(k)}(i_k),
$$
for $x_k^{i_k} \le x_k \le x_k^{i_k+1},$ which induces the corresponding multi-linear approximation $\tilde\pi$ of $\pi$ in \eqref{eq:tt}.

If the individual terms $\pi^{(k)}_{\alpha_{k-1},\alpha_k}(x_k)$ are normalized PDFs,
the TT approximation in~\eqref{eq:tt} may be viewed as a mixture distribution.
However, the TT decomposition can be more general and may also include negative terms.
Moreover, at some locations where $\pi(x)$ is close to zero the whole approximation $\tilde\pi(x)$ may take (small) negative values.
This will be circumvented by explicitly taking absolute values in the conditional distribution sampling method, see Sec. \ref{sec:ttcd}.

The interpolated TT approximation to $\pi$ in  \eqref{eq:tt} required several choices. First a coordinate system must be chosen, then an ordering of coordinates, then a rectangular region that contains the (appreciable) support of the PDF, and then univariate grids for each coordinate within the rectangular region. Each of these choices affects the TT ranks, and hence the efficiency of the TT representation in terms of storage size versus accuracy of the approximation, that is also chosen; see later. In this sense, the sampler that we develop is not `black box'. However, as we demonstrate in the computed examples, an unsophisticated choice for each of these steps already leads to a computational method for sampling and evaluating expectations that is substantially more efficient than existing MCMC algorithms. Smart choices for each of these steps could lead to further improvements.

The rationale behind the independent discretization of all variables is the rapid convergence of tensor product Gaussian quadrature rules.
If $\pi(x)$ is analytic with respect to all variables, the error of the Gaussian quadrature converges exponentially in $n$.
A straightforward summation of $n^d$ quadrature terms would imply a cost of $\mathcal{O}(|\log\eps|^{d})$ for accuracy $\eps$.
In contrast, the TT ranks often depend logarithmically on $\eps$ under the same assumptions on $\pi(x)$ \cite{tee-tensor-2003,khor-rstruct-2006,uschmajew-approx-rate-2013},
leading to $\mathcal{O}(d|\log\eps|^{3})$ cost of the TT integration, since the integration of the TT decomposition factorizes into one-dimensional integrals over the TT blocks.
This can also be significantly cheaper than the $\mathcal{O}(\eps^{-2})$ cost of Monte Carlo quadrature.

In general, it is difficult to deduce sharp bounds for the TT ranks. Empirically, low ranks occur in the situation of ``weakly'' dependent variables.
For example, if $x_1,\ldots,x_d$ correspond to independent random quantities, the PDF factorizes into a single product of univariate densities,
which corresponds to the simplest case, $r=1$ in \eqref{eq:tt}.
Thus, a numerical algorithm that can robustly reveal the ranks is indispensable.

\subsection{TT-cross approximation}
A quasi-optimal approximation of $\hat \pi$ for a given TT rank, in the Frobenius norm, is available via the truncated singular value decomposition (SVD)~\cite{osel-tt-2011}. However, the SVD requires storage of the full tensor which is not affordable in many dimensions. A practical method needs to be able to compute the representation~\eqref{eq:tt} using \emph{only a few} evaluations of $\pi$. A workhorse algorithm of this kind is the alternating TT-cross method \cite{ot-ttcross-2010}. That builds on the skeleton decomposition of a matrix~\cite{gore-tyrt-zam-1997}. It represents an $n\times m$ matrix $P$ of rank $r$ as the \emph{cross} (in MatLab-like notation)
\begin{equation}
 P=P(:,\mathcal{J})P(\mathcal{I},\mathcal{J})^{-1}P(\mathcal{I},:)
 \label{eq:cross}
\end{equation}
of $r$ columns and rows, where $\mathcal{I}$ and $\mathcal{J}$ are two index sets of cardinality $r$ such that $P(\mathcal{I},\mathcal{J})$ (the intersection matrix) is nonsingular. If $r\ll n,m$, this decomposition requires computing only $(n+m-r)r\ll nm$ elements of the original matrix. The SVD may be used for choosing the cross~\eqref{eq:cross}, though with greater cost, as noted above.

The TT-cross approximation may now be constructed by reducing the sequence of \emph{unfolding} matrices
$\hat \pi_k=[\hat \pi(i_1,\ldots,i_k;i_{k+1},\ldots,i_d)]$, that have the first $k$ indices grouped together to index rows, and the remaining indices grouped to index columns. We begin with $\hat \pi_1$.

We start with a set $\mathcal{I}_{>1} = \{(i_2^{\alpha_1},\ldots,i_d^{\alpha_1})\}_{\alpha_1=1}^{r_1}$ of $r_1$ $(d-1)$-tuples
such that $\hat\pi(:,\mathcal{I}_{>1})$ forms a ``good'' basis for the rows of $\hat \pi_1$ (in the $i_1$ variable) and choose a set $\mathcal{I}_{<2} = \{i_1^{\alpha_1}\}_{\alpha_1=1}^{r_1}$ of $r_1$ row indices such that the  \emph{volume} (the modulus of the determinant) of the $r_1 \times r_1$ submatrix $\hat\pi(\mathcal{I}_{<2},\mathcal{I}_{>1})$ is \emph{maximized}.
This can be achieved in $\mathcal{O}(nr_1^2)$ operations using the \emph{maxvol} algorithm \cite{gostz-maxvol-2010}.
The first discrete TT block $\hat \pi^{(1)}$ is then assembled from the rectangular $n \times r_1$ matrix $\hat\pi(:,\mathcal{I}_{>1})\hat\pi(\mathcal{I}_{<2},\mathcal{I}_{>1})^{-1}$, and the reduced tensor $[\hat\pi_{>1}({\alpha_1}, i_2,\ldots,i_d)] = [\hat\pi(i_1^{\alpha_1}, i_2,\ldots,i_d)]$ is passed on to the next step of the TT cross.
In a practical algorithm, to ensure numerical stability all these operations are actually carried out using QR-decompositions of the  matrices~\cite{ot-ttcross-2010}.

In the $k$-th step, we assume that we are given the reduction $\hat\pi_{>k-1}(\alpha_{k-1},i_k,\ldots,i_d)$ from the previous step, as well as two sets $\mathcal{I}_{<k} = \{(i_1^{\alpha_{k-1}},\ldots,i_{k-1}^{\alpha_{k-1}})\}_{\alpha_{k-1}=1}^{r_{k-1}}$ and $\mathcal{I}_{>k} = \{(i_{k+1}^{\alpha_k},\ldots,i_d^{\alpha_k})\}_{\alpha_k=1}^{r_k}$ containing, resp., $r_{k-1}$ $(k-1)$-tuples and $r_k$ $(d-k)$-tuples.
The unfolding tensor $[\hat\pi_{>k-1}({\alpha_{k-1}}, i_k;~\mathcal{I}_{>k})]$ can then be seen as a $r_{k-1}n \times r_k$ rectangular matrix and the \emph{maxvol} algorithm can be applied again to produce a set of row positions $\{\alpha_{k-1}^{\alpha_k}, i_k^{\alpha_k}\}_{\alpha_k=1}^{r_k}$, which upon replacing $\alpha_{k-1}^{\alpha_k}$ with the corresponding indices from $\mathcal{I}_{<k}$ leads to the next index set $\mathcal{I}_{<k+1}=\{(i_1^{\alpha_{k}},\ldots,i_{k}^{\alpha_{k}})\}_{\alpha_{k}=1}^{r_{k}}$.
The induction is completed by taking $\hat\pi^{(d)} = \hat \pi_{>d-1}$.

This process can be also organized in the form of a binary tree, which gives rise to the so-called hierarchical Tucker cross algorithm \cite{Ballani-HTUQOut-2015}.
In total, we need $\mathcal{O}(dnr^2)$ evaluations of $\pi$ and $\mathcal{O}(dnr^3)$ additional operations for the computation of the maximum volume matrices.

The choice of the univariate grids, $x_k^{1} < \cdots < x_k^{n_k}$, and of the initial index sets $\mathcal{I}_{>k}$ can be crucial.
In this paper we found that a uniform grid in each coordinate was sufficient, with even relatively coarse grids resulting in efficient sampling algorithms; see the numerical examples for details. Given any easy to sample reference distribution (e.g. uniform or Gaussian), it seems reasonable to initialize $\mathcal{I}_{>k}$ with independent realizations of that distribution (we could also expand the \emph{grids} with reference samples, though we did not do that).
If the target function $\pi$ admits an \emph{exact} TT decomposition with TT ranks not greater than $r_1,\ldots,r_{d-1}$,
and all unfolding matrices have ranks not smaller than the TT ranks of $\pi$,
the cross iteration outlined above reconstructs $\hat\pi$ \emph{exactly} \cite{ot-ttcross-2010}.
This is still a rare exception though, since most functions have infinite exact TT ranks, even if they can be \emph{approximated} by a TT decomposition with a small error and low ranks.
Nevertheless, the cross iteration, initialized with slightly \emph{overestimated} values $r_1,\ldots,r_{d-1}$, can deliver a good approximation, if a function is regular enough \cite{Ballani-HTUQOut-2015,ds-alscross-2019}.

This might be not the case for \emph{localized} probability density functions.
For example, for a heavy-tailed function $(1+x_1^2+\cdots+x_d^2)^{-1/2}$ one might try to produce $\mathcal{I}_{>k}$ from a uniform distribution in a cube $[0,a]^d$ with a sufficiently large $a$.
However, since this function is localized in an exponentially small volume $[0,\eps]^d$, uniform index sets deliver
a poor TT decomposition, worse for larger $a$ and $d$.

In this situation it is crucial to use fine grids and refine the sets $\mathcal{I}_{<k},\mathcal{I}_{>k}$ by conducting \emph{several} TT cross iterations, going back and forth over the TT blocks and optimizing the sets by the maxvol algorithm.
For example, after computing $\hat \pi^{(d)} = \hat \pi_{>d-1}$, we ``reverse'' the algorithm and consider the unfolding matrices with indices
$\{(i_1^{\alpha_{d-1}},\ldots,i_{d-1}^{\alpha_{d-1}})\}_{\alpha_{d-1}=1}^{r_{d-1}} = \mathcal{I}_{<d}$.
Applying the maxvol algorithm to the \emph{columns} of a $r_{d-1} \times n$ matrix $\hat \pi^{(d)}$, we obtain a \emph{refined} set of points $\mathcal{I}_{>d-1} = \{i_d^{\alpha_{d-1}}\}_{\alpha_{d-1}=1}^{r_{d-1}}$.
The recursion continues from $k=d$ to $k=1$, optimizing the right sets $\mathcal{I}_{>k}$, while taking the left sets $\mathcal{I}_{<k}$ from the previous (forward) iteration.
After several iterations, both $\mathcal{I}_{<k}$ and $\mathcal{I}_{>k}$ will be optimized to the particular target function,
even if the initial index sets gave a poor approximation.

This adaptation of points goes hand in hand with the \emph{adaptation of ranks}.
If the initial ranks $r_1,\ldots,r_{d-1}$ were too large for the desired accuracy, they can be reduced.
However, we can also \emph{increase} the ranks by computing the unfolding matrix $\left[\hat\pi(\mathcal{I}_{<k}, i_k;~i_{k+1}^{\alpha_k},\ldots,i_{d}^{\alpha_k})\right]$ on some \emph{enriched} index set $\{(i_{k+1}^{\alpha_k},\ldots,i_d^{\alpha_k})\}_{\alpha_{k}=1}^{r_{k}+\rho}$, by augmenting the original index set $\mathcal{I}_{>k}$ with
an \emph{auxiliary} set $\mathcal{I}_{>k}^{aux}$
and increasing the $k$-th TT rank from $r_k$ to $r_k+\rho$.
The auxiliary set can be chosen at random~\cite{Os-mvk2-2011} or using a surrogate for the error~\cite{ds-amen-2014}. 
The pseudocode of the entire TT cross method is listed in Algorithm \ref{alg:cross}.
For uniformity, we let $\mathcal{I}_{<1} = \mathcal{I}_{>d} = \emptyset$.
\begin{algorithm}[t]
\caption{TT cross algorithm for TT approximation of $\pi$.}
\label{alg:cross}
\begin{algorithmic}[1]
 \Require Initial index sets $\mathcal{I}_{>k}$, rank increasing parameter $\rho\ge 0$, stopping tolerance $\delta>0$ and/or maximum number of iterations $\mathrm{iter}_{\max}$.
 \Ensure TT blocks of an approximation $\tilde \pi(x) \approx \pi(x)$.
 \While{$\mathrm{iter}<\mathrm{iter}_{\max}$ and $\|\tilde\pi_{\mbox{iter}}-\tilde\pi_{\mbox{iter}-1}\|>\delta \|\tilde\pi_{\mbox{iter}}\|$}
    \For{$k=1,2,\ldots,d$} \Comment{Forward iteration}
      \State (Optionally) prepare enrichment set $\mathcal{I}_{>k}^{aux}$.
      \State Compute $r_{k-1}n \times r_k$ unfolding $\hat\pi(\mathcal{I}_{<k},i_k;~\mathcal{I}_{>k})$.
       \State Compute $\mathcal{I}_{<k+1}$ by \emph{maxvol} alg. and truncate.
    \EndFor
    \For{$k=d,d-1,\ldots,1$} \Comment{Backward iteration}
      \State (Optionally) prepare enrichment set $\mathcal{I}_{<k}^{aux}$.
      \State Compute $r_{k-1} \times nr_k$ unfolding $\hat\pi(\mathcal{I}_{<k}~;i_k,\mathcal{I}_{>k})$.
    \State Compute $\mathcal{I}_{>k-1}$ by \emph{maxvol} alg. and truncate.
    \EndFor
 \EndWhile
\end{algorithmic}
\end{algorithm}

Systematically using the enrichment scheme, we can even employ a different approach moving
away from truncating ranks. Instead, we start with a low-rank initial guess and increase the ranks
until the desired accuracy is met. We have found that this approach is often more accurate in
numerical experiments. The relative cost of the two approaches depends on the application.

\section{Sampling Algorithms based on TT Surrogates}
\label{sec:sampalg}

\subsection{Conditional Distribution Sampling (TT-CD)}
\label{sec:ttcd}
One of the main contributions of this paper is to show that conditional distribution method is feasible, and efficient, once a PDF has been put into TT format. This section presents those calculations.


First, we describe the computation of the marginal PDFs $p_k$, defined in~\eqref{eq:p_k}, given $\pi$ in a TT format \eqref{eq:tt}.
Note that integrals over the variable $x_p$ appear in all conditionals~\eqref{eq:p_k} with $k<p$.
The TT format allows to compute the $r_{k-1} \times 1$ vector $P_k$ required for evaluating the marginal PDF $p_{k-1}$ by the following algorithm.
\begin{algorithmic}[1]
 \State Initialize $P_{d+1}=1$
 \For{$k=d,d-1,\ldots,2$}
    \State $(P_{k})_{\alpha_{k-1}} = \sum\limits_{\alpha_k=1}^{r_k} \left(\int\limits_{\mathbb{R}} \pi^{(k)}_{\alpha_{k-1},\alpha_k}(x_k) \dx_k \right) (P_{k+1})_{\alpha_k}$
 \EndFor
\end{algorithmic}
Since $\pi^{(k)}(x_k) \in \mathbb{R}^{r_{k-1} \times r_k}$ for each fixed $x_k$, the integral $\int \pi^{(k)}(x_k) \dx_k$ is a $r_{k-1} \times r_k$ matrix, where $\alpha_{k-1}$ is the row index, and $\alpha_k$ is the column index. Hence, we can write Line 3 as the matrix-vector product,
$$
P_{k} = \left( \int_{\mathbb{R}} \pi^{(k)}(x_k) \dx_k \right)  P_{k+1}.
$$
Assuming $n$ quadrature points for each $x_k$, and the uniform rank bound $r_k \le r$, the asymptotic complexity of this algorithm is $\mathcal{O}(dnr^2)$.

The first marginal PDF is approximated by $p_1^*(x_1) = |\pi^{(1)}(x_1) P_2|$.
We take the absolute value because the TT approximation $\tilde\pi$ (and hence, $\pi^{(1)}(x_1) P_2$) may be negative at some locations.
In the $k$-th step of the sampling procedure, the marginal PDF also requires the first $k-1$ TT blocks, restricted to the components of the sample that are already determined\footnote{Here again, we treat $\pi^{(k)}(x_k)$ as a $r_{k-1} \times r_k$ matrix, such that the product is valid.},
$$
p_k^*(x_k) = \left|\pi^{(1)}(x_1) \cdots \pi^{(k-1)}(x_{k-1}) \pi^{(k)}(x_k) P_{k+1}\right|.
$$
However, since the loop goes sequentially from $k=1$ to $k=d$, the sampled TT blocks can be accumulated in the same fashion as the integrals $P_k$.
Again, we take the absolute value to ensure positivity.
The overall method for drawing $N$ samples is written in Algorithm \ref{alg:samp}.
Note that if $\tilde\pi$ is negative at any points, the actual density $\ttpi$ at $x^\ell$, which is the product of marginal PDFs computed in each step, may slightly differ from $\tilde\pi$.
\begin{algorithm}[t]
\caption{CD sampling from a TT decomposition of a PDF}
\label{alg:samp}
\begin{algorithmic}[1]
 \Require TT blocks $\pi^{(1)},\ldots,\pi^{(d)}$ of the approximation $\tilde\pi$, uniformly distributed seeds $\{(q_1^\ell,\ldots,q_d^\ell)\}_{\ell=1}^{N} \sim \mathcal{U}(0,1)^d$.\vspace{0.25ex}
 \Ensure $\ttpi$-distributed samples $\{(x_1^\ell,\ldots,x_d^\ell)\}_{\ell=1}^{N}$, as well as PDF values $\ttpi(x^\ell) = p_1^*(x_1^\ell) \cdots p_d^*(x_d^\ell)$.
 \State Initialize $P_{d+1}=1$.
 \For{$k=d,d-1,\ldots,2$}
    \State Compute $P_{k} = \int_{\mathbb{R}} \pi^{(k)}(x_k) \dx_k \cdot P_{k+1}$.
 \EndFor
 \State Initialize $\Phi_{1}=1 \in \mathbb{R}^{N}$.
 \For{$k=1,2,\ldots,d$}
    \State\label{alg:samp:Psi} Prepare deterministic part $\Psi_k(x_k) = \pi^{(k)}(x_k) P_{k+1}$.
    \For{$\ell=1,\ldots,N$}
       \State\label{alg:samp:pk} Compute marginal PDF $p_k^*(x_k) = \left|\Phi_k(\ell,:)\Psi_k(x_k)\right|$,
       \State\label{alg:samp:Ck} marginal CDF $C_k(x_k) = \int_{-\infty}^{x_k} p_k^*(y_k) \dy_k/{\int p_k^* \dx_k}$.
       \State\label{alg:samp:xk} Sample $x_k$ component, $x_k^\ell = C_k^{-1}(q_k^\ell)$.
       \State Compute $\Phi_{k+1}(\ell,:) = \Phi_k(\ell,:) \pi^{(k)}(x_k^\ell)$.
    \EndFor
 \EndFor
\end{algorithmic}
\end{algorithm}

The error induced by taking the absolute values in Line~\ref{alg:samp:pk} of Alg.~\ref{alg:samp} is of the order of the TT approximation error.
The approximate marginal probability $\tilde p_k(x_k) = \Phi_k(\ell,:) \Psi_k(x_k)$ is produced from $\tilde\pi(x)$ by integration,
hence if $\pi(x)-\tilde\pi(x) = \mathcal{O}(\eps)$ due to the TT approximation, we also have $|\tilde p_k(x_k) - p_k(x_k)| \le C \epsilon \|p_k\|_\infty$ for the marginals, for some $C>0$ independent of $\eps$, where $\|p_k\|_\infty := \operatorname{ess}\sup_{\xi_k} p_k(\xi_k)$.
Then, for all $x_k$ that satisfy $p_k(x_k) \ge C \eps \|p_k\|_\infty$, we have
$$
\tilde p_k(x_k) \ge C \eps \|p_k\|_\infty - |p_k(x_k) - \tilde p_k(x_k)| \ge 0.
$$
Hence, $\tilde p_k$ can only be negative where $p_k$ is small and we have
 $-C \eps \|p_k\|_\infty \le \tilde p_k(x_k) \le 0$.
The error in taking the modulus in Line~\ref{alg:samp:pk} of Alg.~\ref{alg:samp} can then be estimated as follows:
$$
|p^*_k(x_k) - \tilde p_k(x_k)| \; \le \; \left\{\begin{array}{ll}2 C \eps \|p_k\|_\infty\,, & \ \text{for} \ \tilde p_k(x_k)<0, \\ 0, & \ \mbox{otherwise}.\end{array}\right.
$$

The sample-independent prefactor of the marginal PDF in Line \ref{alg:samp:Psi} requires $\mathcal{O}(dnr^2)$ operations.
The marginal PDF in Line \ref{alg:samp:pk} can then be computed with $\mathcal{O}(dNnr)$ cost.
The cost of the CDF computation in Line \ref{alg:samp:Ck} depends on the quadrature scheme used.
For a piecewise spline approximation or for the barycentric Gauss formula the cost for both $C_k$ and $C_k^{-1}$
is $\mathcal{O}(dNn)$. The complexity of computing the conditional PDF values $\Phi_{k+1}$ depends on how $\tilde\pi$ is interpolated onto $x_k^\ell$. Global Lagrange interpolation requires $\mathcal{O}(nr^2)$ cost per sample, whereas local interpolation is independent of $n$, requiring only $\mathcal{O}(r^2)$ operations.
In our numerical experiments, we have found piecewise linear interpolation on a uniform grid to be sufficient.
In summary, the total complexity is
\begin{equation}
\mathcal{O}\big(dr (nr + N(n+r))\big)\,.
\label{eq:cost}
\end{equation}

\subsection{Metropolis--Hastings correction (TT-MH)}
\label{sec:ttmcmc}

For the TT-CD sampling procedure in Alg.~\ref{alg:samp} to be fast, the TT ranks $r$ should be as small as possible.
Since the joint PDF is typically a complicated multivariate function, its TT ranks may grow fast with the increasing accuracy.
On the other hand, low accuracy is typically sufficient if we 'correct' the distribution using the Metropolis--Hastings (MH) algorithm to ensure that the samples are distributed according to the target distribution $\pi$. Thus, we first propose to use a coarse TT approximation together with TT-CD sampling as independence proposals in a MH algorithm\footnote{A more simple scheme may be to use a multiple of $\ttpi$ to bound $\pi$ and then use a rejection algorithm. However, as noted in \cite{liu1996}, the MH is more statistically efficient.}.

When the current state is $x$ and the new proposal is $x'$, the next state is determined by the stochastic iteration that first computes the
Metropolis--Hastings ratio
$$
h(x,x') = \frac{\pi(x')}{\pi(x)} \frac{\ttpi(x)}{\ttpi(x')},
$$
and the proposal is \emph{accepted} with probability
\begin{equation}
\alpha(x,x') = \min(h(x,x'), 1), \label{eq:mh-ratio}
\end{equation}
putting the new state $x=x'$, otherwise $x'$ is \emph{rejected} and the chain remains at $x$.

As efficiency indicators of this MH algorithm for estimating the expected value $\mathbb{E}_\pi g$ of some functional $g(x)$, we consider the acceptance rate and the integrated autocorrelation time.
In this section, we study how they depend on the approximation error in the PDF. Throughout we must assume that $\pi$ is absolutely continuous with respect to $\pi^*$, that guarantees reversibility with respect to $\pi$~\cite{tierney1998}, and that we can evaluate the importance ratio $w(x)=\pi(x)/\pi^*(x)$. We require that $w^*\equiv \| w \|_\infty < \infty$, which is equivalent to uniform geometric convergence (and ergodicity) of the chain~\cite{RobertsRosenthal2011}. (The essential supremum may be taken with respect to $\pi$ or $\pi^*$.)

To simplify the presentation in this subsection, we assume again (without loss of generality) that the density is normalized.
\begin{lemma}
\label{lem:acc_rate}
Suppose that $\pi(x)$ is normalized, and that the mean absolute error in the TT-CD sampling density satisfies
$$
\int | \ttpi(x) - \pi(x)| \dx \le \eps/2.
$$
Then the rejection rate is bounded by $\eps$, i.e.,
$$
\mathbb{E}\left[ 1 - \alpha(x,x')\right]  \le \eps,
$$
where the expectation is taken over the chain.
\end{lemma}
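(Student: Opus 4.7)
My plan is to express the expected rejection rate as a double integral, symmetrize it using the independence-proposal structure, and then bound it by the $L^1$ approximation error via the triangle inequality.

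First, since the proposal $x'$ is drawn independently from $\pi^*$ and, at stationarity, $x \sim \pi$, the expected rejection rate is
\begin{equation*}
\mathbb{E}\bigl[1-\alpha(x,x')\bigr] = 1 - \int\!\!\int \pi(x)\,\pi^*(x')\,\alpha(x,x')\,\dx\,\dx'.
\end{equation*}
Using $\alpha(x,x')=\min(h(x,x'),1)$ and the definition of $h$, I would observe the key algebraic identity
\begin{equation*}
\pi(x)\,\pi^*(x')\,\alpha(x,x') = \min\bigl(\pi(x)\pi^*(x'),\ \pi(x')\pi^*(x)\bigr),
\end{equation*}
which is the standard detailed-balance-like simplification for MH with an independence proposal. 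This is the first important step.

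Next, applying the elementary identity $\min(a,b) = \tfrac12(a+b) - \tfrac12|a-b|$ and using $\int\pi=\int\pi^*=1$ (by the normalization assumption), I get
\begin{equation*}
\int\!\!\int \pi(x)\pi^*(x')\,\alpha(x,x')\,\dx\,\dx' = 1 - \tfrac12\int\!\!\int \bigl|\pi(x)\pi^*(x') - \pi(x')\pi^*(x)\bigr|\,\dx\,\dx',
\end{equation*}
so the expected rejection rate equals exactly $\tfrac12\int\!\!\int |\pi(x)\pi^*(x') - \pi(x')\pi^*(x)|\,\dx\,\dx'$.

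Finally, I would bound the integrand via the triangle inequality, inserting $\pm\,\pi(x)\pi(x')$:
\begin{equation*}
\bigl|\pi(x)\pi^*(x')-\pi(x')\pi^*(x)\bigr| \le \pi(x)\,\bigl|\pi^*(x')-\pi(x')\bigr| + \pi(x')\,\bigl|\pi^*(x)-\pi(x)\bigr|.
\end{equation*}
Integrating and using $\int\pi=1$ on the non-differenced factor in each term gives $\int|\pi^*-\pi|\,\dx + \int|\pi^*-\pi|\,\dx = 2\int|\pi^*-\pi|\,\dx \le \eps$, so the rejection rate is at most $\tfrac12\cdot 2\int|\pi^*-\pi|\,\dx \le \eps$, as claimed.

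The only step requiring care is the symmetrization identity $\pi(x)\pi^*(x')\alpha = \min(\pi(x)\pi^*(x'),\pi(x')\pi^*(x))$; everything else is bookkeeping. The argument is tight in that the rejection rate is literally half the $L^1$ distance between the two joint densities $\pi(x)\pi^*(x')$ and $\pi(x')\pi^*(x)$, which is a natural generalization of the known total-variation bound for independence MH samplers.
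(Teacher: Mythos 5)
Your proposal is correct and follows essentially the same route as the paper: both write the expected rejection rate as a double integral against the joint density $\pi(x)\pi^*(x')$, reduce it to the quantity $|\pi(x)\pi^*(x')-\pi(x')\pi^*(x)|$, and finish with the same triangle-inequality insertion to get $2\int|\pi^*-\pi|\,\dx\le\eps$. The only difference is that you use the exact identity $\pi(x)\pi^*(x')\,\alpha(x,x')=\min\bigl(\pi(x)\pi^*(x'),\pi(x')\pi^*(x)\bigr)$ together with $\min(a,b)=\tfrac12(a+b)-\tfrac12|a-b|$, whereas the paper uses the cruder pointwise bound $1-\alpha\le|1-h|$, so your argument in fact yields the slightly sharper conclusion $\mathbb{E}[1-\alpha(x,x')]\le\eps/2$.
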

\begin{proof}
Using ergodicity of the chain,
$$
\mathbb{E}\left[1- \alpha(x,x')\right] = \int\!\!\!\int\left[1- \alpha(x,x')\right] {{\pi}}(x) \ttpi (x') \dx \dx'.
$$
Since $1-\alpha\leq |1-h|$,
\begin{align*}
  \left[ 1 - \alpha(x,x')\right] {{\pi}}(x) \ttpi (x')&\leq |{{\pi}}(x) \ttpi (x') -{{\pi}}(x') \ttpi (x)| \\
  & \leq \pi(x)| \ttpi (x') -\pi(x')|\\
  & \qquad+ \pi(x')|\ttpi (x) -\pi(x)|
\end{align*}
where the second step uses the triangle inequality. Integrating both sides with respect to $x$ and $x'$, we obtain the claim of the lemma.\hfill$\Box$
\end{proof}

This lemma indicates that the rejection rate decreases proportionally to $\eps$, where $\eps$ is the total error due to approximating $\pi$ by a low-rank TT decomposition $\tilde{\pi}$, interpolating discrete values of $\tilde \pi$ on
a grid, and taking the absolute values in Alg. \ref{alg:samp}, Line \ref{alg:samp:pk}.

Lemma~\ref{lem:acc_rate} assumed a \emph{mean} absolute error. We need the stronger statement of \emph{local} relative error, that is $w^*<\infty$, to bound the integrated autocorrelation time (IACT) \cite{wolff-mcerr-2004}, defined as
\begin{equation}
   \tau = \left(1+2\sum_{t=1}^\infty \rho_{gg}(t) \right)
\end{equation}
where $\rho_{gg}(t)$ is the autocorrelation coefficient for the chain in statistic $g$ at lag $t$.
Defined like this, $\tau\geq 1$ can be considered as a reduction factor in the efficiency of a particular MCMC chain compared to an ideal independent chain, asymptotically as the length of the chain goes to infinity.
Note that $w^*<\infty$ implies that TT-MH is uniformly ergodic, but conversely the MCMC is not even geometrically ergodic if $w^*=\infty$~\cite[Thm. 2.1]{MengersenTweedie1996}.

\begin{lemma}
\label{lem:iact}
When $w^*<\infty$, for any $g\in L^2(\pi)$,
$$
\tau \leq \frac{1+a}{1-a},
$$
where $a=1-1/w^*$.
\end{lemma}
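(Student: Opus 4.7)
The plan is to establish a Doeblin-style minorization for the TT-MH kernel, decompose it as an i.i.d.\ draw from $\pi$ plus a residual Markov kernel that contracts on $L^2(\pi)$, and propagate the resulting geometric decay to the autocovariances.

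Let $K$ denote the TT-MH transition kernel with proposal $\ttpi$ and acceptance $\alpha(x,y) = \min(1, w(y)/w(x))$. For any measurable set $A$ and any $x$,
\begin{equation*}
K(x,A) \;\geq\; \int_A \alpha(x,y)\,\ttpi(y)\,\dy \;=\; \int_A \min\!\bigl(\ttpi(y),\;\pi(y)/w(x)\bigr)\,\dy.
\end{equation*}
Both entries of the $\min$ are at least $\pi(y)/w^*$: the first because $\ttpi(y) = \pi(y)/w(y) \geq \pi(y)/w^*$, the second because $w(x) \leq w^*$. Hence the classical Doeblin minorization
$$
K(x,\cdot) \;\geq\; (1-a)\,\pi(\cdot) \qquad \text{uniformly in } x, \quad a := 1 - 1/w^*,
$$
holds, and this is the only place where $w^* < \infty$ is invoked.

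Writing $\Pi$ for the kernel $\Pi(x,\cdot) := \pi(\cdot)$ that resamples directly from $\pi$, the minorization lets us decompose $K = (1-a)\Pi + aR$ with $R$ a genuine Markov kernel (nonnegative, row-stochastic). Because $\pi$ is stationary for $K$ and trivially for $\Pi$, subtraction yields $\pi R = \pi$, so by Jensen's inequality $R$ is an $L^2(\pi)$-contraction that preserves the mean-zero subspace $L^2_0(\pi)$. For any $g \in L^2_0(\pi)$ we have $\Pi g = 0$, so $Kg = aRg$; induction then gives $K^t g = a^t R^t g$ for every $t \geq 0$.

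Autocorrelations are invariant under centering, so we may assume without loss of generality that $\mathbb{E}_\pi g = 0$. Cauchy--Schwarz together with $\|R^t g\|_{L^2(\pi)} \leq \|g\|_{L^2(\pi)}$ then yields
$$
|\rho_{gg}(t)| \;=\; \frac{|\langle g, K^t g\rangle_\pi|}{\|g\|_{L^2(\pi)}^2} \;=\; \frac{a^t\,|\langle g, R^t g\rangle_\pi|}{\|g\|_{L^2(\pi)}^2} \;\leq\; a^t,
$$
so the series defining $\tau$ converges absolutely and is bounded by $1 + 2\sum_{t\geq 1} a^t = (1+a)/(1-a)$. The only genuinely nontrivial step is the minorization: once $K \geq (1-a)\Pi$ is in hand, the geometric contraction of autocovariances is essentially the classical spectral-gap bound $\|K|_{L^2_0(\pi)}\| \leq a$ for independence Metropolis--Hastings.
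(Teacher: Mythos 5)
Your proof is correct, but it takes a genuinely different route from the paper. The paper invokes the explicit Smith--Tierney form of the independence-MH kernel, $P(x,\mathrm{d}y)=\min(1/w(x),1/w(y))\pi(\mathrm{d}y)+\lambda(w(x))\delta_x(\mathrm{d}y)$, compares it with the reference kernel $P_a(x,\mathrm{d}y)=(1-a)\pi(\mathrm{d}y)+a\delta_x(\mathrm{d}y)$ whose IACT is exactly $(1+a)/(1-a)$, and concludes via Peskun ordering (off-diagonal domination implies the asymptotic variance, hence the IACT, can only decrease, citing Mira--Geyer). You instead derive the Doeblin minorization $K(x,\cdot)\ge(1-a)\pi(\cdot)$ directly from $\alpha(x,y)\ttpi(y)=\min(\ttpi(y),\pi(y)/w(x))\ge\pi(y)/w^*$, split $K=(1-a)\Pi+aR$, and turn this into the operator-norm bound $\|K^t\|_{L^2_0(\pi)}\le a^t$, so that $|\rho_{gg}(t)|\le a^t$ and the geometric series gives the same bound. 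What the paper's route buys is brevity through citation of standard MCMC comparison theory and a bound on the actual signed sum defining $\tau$; what your route buys is a self-contained elementary argument (only Jensen and Cauchy--Schwarz), and a slightly stronger conclusion, namely absolute summability of the autocorrelations together with uniform geometric ergodicity at rate $a$, which recovers the remark in the paper that $w^*<\infty$ implies uniform ergodicity. Two small points of hygiene: the minorization holds for $\pi$-a.e.\ $x$ rather than literally all $x$, since $w^*$ is an essential supremum (this does not affect the $L^2(\pi)$ argument), and the division by $a$ in deducing $\pi R=\pi$ requires $a>0$; the case $a=0$ is trivial since then $K=\Pi$ and $\tau=1$.
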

\begin{proof}
Without loss of generality we may consider $g\in L_0^2(\pi)$, i.e., $E_{\pi}[g]=0$ (see, e.g.,~\cite{MiraGeyer1999}).
Consider the transition kernel
\[ P_{a}(x,\dy)=(1-a)\pi(\dy)+a\delta_x(\dy). \]
(This is the chain that proposes from $\pi$ and accepts with probability $(1-a)$.)
$P_{a}$ has a simple spectrum, consisting of $1$, with right eigenvector $\mathbf 1$, and $a$ for the orthogonal compliment. Hence the asymptotic variance in a CLT for the chain in $g\in L_0^2(\pi)$ induced by $P_a$ may be evaluated using the spectral measure (see, e.g.~\cite{MiraGeyer1999,HaggstromRosenthal2007}), which reads $\mathcal{E}_g(S)=\delta_a(S)$ in this case, giving IACT equal to $(1+a)/(1-a)$.
The transition kernel for the TT-MH chain is~\cite[Thm.~1 \& Lem.~3]{smithtierney1996}
\[ P(x,\dy)=\min(1/w(x),1/w(y))\pi(\dy)+\lambda(w(x))\delta_x(\dy), \]
with $\lambda$ given by~\cite[Eq (5)]{smithtierney1996}.
Since $\min(1/w(x),1/w(y))\geq 1/w^*$, $P$ dominates $P_{a}$, in the sense of Peskun ordering~\cite{tierney1998,MiraGeyer1999}, i.e., the off-diagonal terms in $P$ are greater or equal than those in $P_{a}$, and hence the IACT using $P$ is less or equal than that using $P_a$~\cite[Thm 3.4]{MiraGeyer1999}.\hfill$\Box$
\end{proof}
For discrete state spaces, the result in Lemma~\ref{lem:iact} follows directly from~\cite[Eqn. (2.1)]{mira2001}; while one could argue that this is sufficient for practical computation since computers are finite dimensional.

The TT cross method tends to introduce a more or less uniform error of magnitude $\eps$ \emph{on average}.
For regions where $\pi(x)\gg \eps$, this leads to a bounded importance ratio $w(x) \le 1 + \mathcal{O}(\eps)$.
When $\pi(x) \ll \eps$, we will typically have $\pi^*(x) = \mathcal{O}(\eps)$ and $w(x)<1$.
However, if $\pi(x) \approx \eps$ and a negative error of order $\eps$ is committed, the two may cancel, resulting in a small $\pi^*(x)$, and consequently in a large $w(x)$.
Numerical experiments demonstrate that $w^*-1$ can indeed be much larger than the $L_1$-norm error used in Lemma~\ref{lem:acc_rate} (see Fig.~\ref{fig:sa-n-eps}).
However, these cancellations (and hence the equality in $\min(1/w(x),1/w(y))\geq 1/w^*$) seem to be rare.
Moreover, the practical IACT 
tends to be much smaller than the upper bound given by Lemma~\ref{lem:iact}.


\subsection{QMC samples and importance weights (TT-qIW)}
\label{sec:isqmc}

Due to the Central Limit Theorem, the rate of convergence of the statistical error of a Monte Carlo estimator for $\mathbb{E}_\pi g$, as the number of samples $N \to \infty$, is limited to $\mathcal{O}(N^{-1/2})$. The IACT of the chain induced by a MH sampler, such as the TT-MH sampler in the previous section, only affects the constant in this estimate.

Thus, it is tempting to use more structured quadrature points to obtain a better convergence rate. For example, the TT approximation of $\pi$ provides the possibility to reduce the inherent multi-variate integrals to a sequence of uni-variate integrals, as we did when forming the marginal distributions in Sec.~\ref{sec:ttcd}, and use, e.g., Gauss quadrature.
Another option is to note that the TT-CD map is also well defined for other seed points, such as those taken from a quasi-Monte Carlo (QMC) rule,  
that is, $\{(q_1^\ell,\ldots,q_d^\ell)\}_{\ell=1}^{N}$ in Alg.~\ref{alg:samp} are taken from a QMC lattice in $[0,1]^d$, rather than i.i.d. samples from $\mathcal{U}(0,1)^d$. Under certain assumptions on the smoothness of the quantity of interest, the QMC quadrature can give an error that converges with order $N^{-1}$ instead of $N^{-1/2}$ when $N \to \infty$ \cite{nieder-qmc-1978, Dick-Acta-2013}.
However, both those approaches provide estimates for $\mathbb{E}_{\ttpi} g$, which are biased due to the TT-approximation, and this bias can not be 'corrected' using a MH step, as for i.i.d. seeds. On the other hand, there are no suitable convergence results for MH algorithms based on QMC proposals.

A classical way to remove the bias in the estimate is via importance re-weighting.
Writing the expectation as an integral, then multiplying and dividing by the approximate density function, gives
\begin{align}
\mathbb{E}_{\pi} g & = \frac{1}{Z} \int g(x) \pi(x) \dx = \frac{1}{Z}
\int g(x) w(x) \, \ttpi (x) \dx,
\end{align}
where $Z=\int \pi(x) \dx$ is the normalization constant
and $w(x) = \pi(x)/\ttpi(x)$ is the \emph{importance weight}.
That is, the expectation of $g$ with respect to $\pi$ equals the expectation of the weighted function $g(x) w(x)$ with respect to the approximate density $\ttpi$. The normalization constant can be rewritten as
$
Z = \int w(x) \ttpi (x) \dx\,.
$

Thus, given a set of samples $\{x^{\ell}\}_{\ell=1}^{N} \sim \ttpi$ produced using the TT-CD algorithm, either from a set of i.i.d. samples on $[0,1]^d$ or from a QMC lattice, we compute
\begin{equation}
\mathbb{E}_{\pi} g \approx \frac{1}{\tilde Z} \left( \frac{1}{N} \sum_{\ell=1}^{N} g(x^{\ell}) w(x^{\ell}) \right), \ \
\tilde Z := \frac{1}{N}\sum_{\ell=1}^N w(x^\ell).
\label{eq:is}
\end{equation}
Note that, since $x^{\ell} \sim \ttpi$, the weight $w (x^{\ell}) < \infty$ with probability $1$,
and hence the importance quadrature \eqref{eq:is} is well-defined.
The convergence depends on the distance between $|\ttpi-\pi|$ and on the choice of samples $x^{\ell}$. Most importantly, if the seeds $\{q^\ell\}$ for the TT-CD samples $\{x^{\ell}\}$ in Alg.~\ref{alg:samp} are chosen according to a randomized QMC rule, and the integrand $g(x) w(x)$ is sufficiently smooth, we can expect a rate of convergence close to $\mathcal{O}(N^{-1})$, the estimator is unbiased and under the right smoothness assumptions the convergence rate is dimension independent \cite{Dick-Acta-2013}.

\subsection{Multilevel acceleration}
\label{sec:multi}

Following recent works on multilevel MCMC~\cite{hoang-mlmcmc-2013,scheichl-mlmcmc-2015},
we can also use the (cheap) surrogate $\ttpi$ as a type of control variate to achieve variance reduction in the estimator.

In addition to $\pi^*$, we may also have a cheap 'surrogate' $\tilde g$ for the integrand $g$. For example, in Section~\ref{sec:ff} below,
we will build a TT-surrogate $\tilde u_h(\theta)$ of the FE solution $u_h(\theta)$ of the stochastic diffusion equation, as a function of the stochastic parameters $\theta$, that allows for a cheap approximation $\tilde g(\theta) = \phi(\tilde u_h)$ of any functional $g(\theta)=\phi(u_h)$ of the PDE solution, without having to solve the PDE for each sample. Otherwise, let $\tilde g = g$.

To exploit the multilevel ideas, we observe that
\begin{align}
\mathbb{E}_{\pi} g & = \mathbb{E}_{\pi^*} \tilde g  + \Big[ \mathbb{E}_{\pi} g  - \mathbb{E}_{\pi^*} \tilde g \Big] \label{eq:ml-split1}\\
& = \mathbb{E}_{\pi^*} \tilde g  + \; \mathbb{E}_{\pi^*} \left[\frac{1}{\mathbb{E}_{\pi^*} w} g w - \tilde g\right]. \label{eq:ml-split2}
\end{align}
As in the previous section, given a set of $N_0$ samples $\{x^{\ell}\}_{\ell=1}^{N_0}\sim \ttpi$ produced using the TT-CD algorithm, the first term in \eqref{eq:ml-split1} and \eqref{eq:ml-split2} can be estimated by
\begin{equation}
 \mathbb{E}_{\ttpi} \tilde g \approx \frac{1}{N_0}\sum_{\ell=1}^{N_0} \tilde g(x^\ell)
 \label{eq:ml-coarse}
\end{equation}
Since the expected value in \eqref{eq:ml-coarse} is with respect to $\ttpi$, no MH correction is necessary. Moreover, we can use, as in Section~\ref{sec:isqmc}, QMC seed points $\{q^\ell\} \subset [0,1]^d$ for the TT-CD samples $\{x^{\ell}\}$ in Alg.~\ref{alg:samp}, leading to a much faster convergence of the estimator with respect to $N_0$.

In fact, if the evaluation of $\tilde g$ is significantly faster than the evaluation of~$g$, as in the stochastic diffusion problem below, the cost of estimating the first term in  \eqref{eq:ml-split1} and \eqref{eq:ml-split2} becomes entirely negligible.

To estimate the second term in \eqref{eq:ml-split1} and \eqref{eq:ml-split2} we now proceed as in Sections \ref{sec:ttmcmc} and \ref{sec:isqmc}, respectively.

 First consider a set of i.i.d. samples $\{x^{\ell}\}_{\ell=1}^{N_1} \sim \pi^*$, computed using Alg.~\ref{alg:samp}, and let $\{x_{\text{MH}}^{\ell}\}_{\ell=1}^{N_1}$ be the Markov chain of samples distributed according to $\pi$ after Metro\-polis-Hastings 'correction' of $\{x^{\ell}\}_{\ell=1}^{N_1}$ using the acceptance probability defined in \eqref{eq:mh-ratio}. We can define the following unbiased estimator:
\begin{equation}
\mathbb{E}_{\pi} g - \mathbb{E}_{\ttpi} \tilde g \approx \frac{1}{N_1}\sum_{\ell=1}^{N_1} g(x^\ell_{\text{MH}}) - \tilde g(x^{\ell})\,.
\label{eq:qcorr}
\end{equation}
If $\ttpi \approx \pi$ and $\tilde g \approx g$ the pairs of samples $(\tilde g(x^\ell), g(x_{\text{MH}}^{\ell}))$ are strongly, positively correlated and thus the variance of $g(x^\ell_{\text{MH}}) - \tilde g(x^{\ell})$ is much smaller than the variance of $g(x^\ell_{\text{MH}})$. As a consequence, the number of samples $N_1$ necessary to achieve a prescribed statistical error can be chosen significantly smaller than in Section \ref{sec:ttmcmc}.

Alternatively, consider now the second term in \eqref{eq:ml-split2} and let $\{x^{\ell}\}_{\ell=1}^{N_1}$ be obtained via Alg.~\ref{alg:samp} from a set of $N_1$ randomised QMC seed points $\{q^\ell\}_{\ell=1}^{N_1} \subset [0,1]^d$.
Then we can define the following unbiased estimator:
\begin{equation}
\mathbb{E}_{\pi^*} \left[\frac{1}{\mathbb{E}_{\pi^*} w} g w - \tilde g\right] \approx \frac{1}{N_1} \sum_{\ell=1}^{N_1} \frac{1}{\tilde Z} g(x^\ell) w(x^\ell) - \tilde g(x^\ell).
\label{eq:qcorr2}
\end{equation}
Again, if $\ttpi \approx \pi$ and $\tilde g \approx g$ then $w\approx 1$ and the variance of $g(x^\ell) w(x^\ell) / \tilde Z - \tilde g(x^\ell)$ is small, so that the number of samples $N_1$ can be chosen significantly smaller than the number $N$ of samples in \eqref{eq:is}.
Moreover, since $\tilde Z = 1 + \frac{1}{N_1} \sum_{\ell=1}^{N_1} (w(x^\ell) - 1)$ and $\mathbb{V}_{\ttpi}[w - 1]$ is small, a small value for $N_1$ is also sufficient for the calculation of $\tilde Z$ in \eqref{eq:is}.
If $g w/ \tilde Z - \tilde g$ is sufficiently smooth, the rate of convergence of the sampling error as $N_1 \to \infty$ should again be close to $\mathcal{O}(N_1^{-1})$. However, in contrast to the estimator in \eqref{eq:is}, we do not observe that better rate of convergence for the difference estimator in \eqref{eq:qcorr2}.

It would be possible to further optimize the complexity of the estimators in \eqref{eq:ml-coarse}, \eqref{eq:qcorr} and \eqref{eq:qcorr2} by a judicious choice of the TT accuracy $\eps$, as well as the numbers of samples $N_0$ and $N_1$, There is of course also scope for full multilevel estimators as in \cite{hoang-mlmcmc-2013,scheichl-mlmcmc-2015}. In particular, the values of $N_0$ and $N_1$ can be determined by an adaptive greedy procedure \cite{Scheichl-mlqmc-lognorm-2017}, which compares empirical variances and costs of the two levels and doubles $N_\ell$ on the level that has the maximum profit. However, we will not consider this further and leave it for future works.


\section{Numerical examples}
\label{sec:ne}

\subsection{Shock absorber reliability}
\label{sec:sae}

In this section, we demonstrate our algorithm on a problem of reliability estimation of a shock absorber. The time to failure of a type of shock absorber depends on some environmental conditions (covariates) such as humidity, temperature, etc. We use data \cite{oconnor-absorber-example-2012} on the distance (in kilometers) to failure for 38 vehicle shock absorbers. Since there were no values of any covariates in this example, the values of $D$ covariates were synthetically generated from the standard normal distribution as this would correspond to the case in which the covariates have been  standardized to have mean zero and variance equal to one.
The accelerated failure time regression model \cite{meeker-reliability-book-1998} is widely used for reliability estimation with covariates. We use an accelerated failure time Weibull regression model, which was described as reasonable for this data in \cite{meeker-reliability-book-1998}, where the density of time to failure is of the form
$$
f(t|\theta_1,\theta_2) = \frac{\theta_2}{\theta_1} \left(\frac{t}{\theta_1}\right)^{\theta_2-1} \exp\left(-\left(\frac{t}{\theta_1}\right)^{\theta_2}\right)
$$
and where $\theta_1,\theta_2$ are unknown scale and shape hyperparameters, respectively. The covariates are assumed to affect the failure time distribution only through the scale parameter $\theta_1$, via a standard logarithmic link function, that is
$$
\theta_1(\beta_0,\ldots,\beta_D) = \exp\left(\beta_0 + \sum_{k=1}^{D} \beta_k x_k\right),
$$
where $x_k$ are the covariates.
The $D+2$ unknown parameters $\beta_0,\ldots,\beta_{D}$ and $\theta_2$ must be inferred from the observation data on the covariates $x_k$ and the failure times, which in this example are subject to right censoring (marked with $^+$). The set $T_f$ of failure times is given by:\vspace{1ex}

\begin{tabular}{lllll}
 6700 &6950$^+$ &7820$^+$ &8790$^+$      &9120 \\
 9660$^+$ &9820$^+$ &11310$^+$ &11690$^+$ &11850$^+$ \\
 11880$^+$ &12140$^+$ &12200 &12870$^+$  &13150 \\
 13330$^+$ &13470$^+$ &14040$^+$  &14300 &17520 \\
 17540$^+$ &17890$^+$ &18420$^+$ &18960$^+$ &18980$^+$ \\
 19410$^+$ &20100 &20100$^+$   &20150$^+$ &20320$^+$ \\
 20900 &22700 &23490$^+$  &26510 &27410$^+$ \\
 27490   &27890$^+$  &28100$^+$ \\
\end{tabular}\vspace{1ex}

To perform Bayesian inference on the unknown parameters, we use the prior specifications in~\cite{Christen-shock-2005}, namely an $s$-Normal-Gamma distribution $\pi_0(\beta_0,\ldots,\beta_D, \theta_2)$ given by
$$
\pi_0 = \frac{1}{Z}\theta_2^{\alpha-0.5} \prod_{k=0}^{D}\exp\left(-\frac{\theta_2 (\beta_k-m_k)^2}{2 \sigma_k^2}\right) \exp\left(-\gamma\theta_2\right),
$$
where $\gamma = 2.2932$, $\alpha = 6.8757$, $m_0 = \log(30796)$, $\sigma_0^2 = 0.1563$, $m_1=\cdots=m_D=0$, $\sigma_1=\cdots=\sigma_D=1$,
and $Z$ is the normalization constant.
The parameter ranges
$$
[m_0-3\sigma_0, m_0+3\sigma_0] \times [m_1-3\sigma_1, m_1+3\sigma_1]^{D} \times [0,13]
$$
are large enough to treat the probability outside as negligible.

The (unnormalized) Bayesian posterior density function is given by  a product of Weibull probabilities, evaluated at each observation in $T_f$, and the prior distribution, i.e.
$$
\pi(\beta,\theta_2) = \pi_0(\beta,\theta_2) \prod_{t \in T_f} P(t|\theta_1(\beta),\theta_2),
$$
where
$$
P(t|\theta_1,\theta_2) = \left\{\begin{array}{ll}f(t|\theta_1,\theta_2) & \mbox{if $t$ is not censored}, \\ \exp\left(-\left(\frac{t}{\theta_1}\right)^{\theta_2}\right) & \mbox{if $t$ is censored.} \end{array}\right.
$$
The formula for the censored case arises from the fact that the contribution of a censored measurement is the probability that $t$ exceeds the measured value, that is, $P(t\ge t^{+}|\theta_1,\theta_2) = \int_{t^{+}}^{\infty} f(t|\theta_1,\theta_2)dt$.
We introduce $n$ uniform discretization points in $\beta_0,\ldots,\beta_D$ and $\theta_2$ and compute the TT cross approximation of the discretized density $\pi(\beta_0,\ldots,\beta_D,\theta_2)$.

We consider two quantities of interest, the right $95\%$ mean quantile and the right $95\%$ quantile of the mean distribution, i.e.
\begin{equation}
\label{eq:shock_qois}
\begin{split}
\langle q(f)\rangle & = \frac{1}{N}\sum_{i=1}^{N} \theta_1^i \log^{1/\theta_2^i}(1/0.05), \quad \mbox{and} \\
q(\langle f \rangle) & = t \quad \text{s.t.} \ \ \frac{1}{N}\sum_{i=1}^{N} \int_{0}^{t} f(s|\theta_1^i,\theta_2^i)ds = 0.95,
\end{split}
\end{equation}
respectively. The nonlinear constraint in the computation of the second quantile is solved by Newton's method.
To estimate the quadrature error, we perform $32$ runs of each experiment, and compute an average relative error over all runs, i.e.,
\begin{equation}
 \mathcal{E}_{q} =
 \frac{1}{32}  \sum_{\iota=1}^{32}
 \dfrac{\left|q(\langle f \rangle_{\iota}) - \frac{1}{32} \sum_{\ell=1}^{32} q(\langle f \rangle_{\ell}) \right|}{\frac{1}{32} \sum_{\ell=1}^{32} q(\langle f \rangle_{\ell})},
\label{eq:err_q1q2}
\end{equation}
where $\iota$ and $\ell$ enumerate different runs.

The error in the mean quantile is estimated similarly and then the average of those two error estimates is used in all our convergence studies.
We used quantiles as the quantity of interest in order to illustrate that the TT surrogate captures the tails correctly.

\subsubsection{Accuracy of TT approximation and CD sampler}

We start by analysing the TT-MH sampling procedure, as described in Section \ref{sec:ttmcmc}.
First, we consider how the errors in $\tilde\pi$ due to the tensor approximation and discretization propagate into the quality of the MCMC chain produced by the MH algorithm,
i.e., the rate of rejections and the integrated autocorrelation time.
The chain length is always set to $N=2^{20}$, and the results are averaged over $32$ runs.
We choose a relatively low dimensionality $D=2$, since it allows us to approximate $\pi$ up to a high accuracy.

In Fig.~\ref{fig:sa-n-eps}, we vary the number of grid points $n$, fixing the stopping tolerance for the TT cross algorithm at $\delta=10^{-5}$, as well as benchmarking the algorithm for different thresholds $\delta$, fixing $n=512$.
We track the relative empirical standard deviation of the TT approximation,
\begin{equation}
 \mathcal{E}_{TT} = \sqrt{\frac{1}{31} \sum_{\iota=1}^{32}\left\|\tilde\pi_{\iota} - \frac{1}{32} \sum_{\ell=1}^{32} \tilde\pi_{\ell}\right\|_F^2 / \left\|\frac{1}{32} \sum_{\ell=1}^{32} \tilde\pi_{\ell}\right\|_F^2},
 \label{eq:eps}
\end{equation}
that can be computed exactly in the TT representation, as well as an importance-weighted QMC approximation to the $L_1$-norm error used in Lemma~\ref{lem:acc_rate},
\begin{equation}
\mathcal{E}_{L_1} = \frac{1}{N} \sum_{\ell=1}^{N} \left|w(x^\ell) - 1\right| \approx \int \left|\pi(x) - \pi^*(x)\right| dx.
\label{eq:epsl1}
\end{equation}

\begin{figure*}
\centering
\caption{Shock absorber example ($D=2$): rejection rate, IACT, estimated errors and importance weights (left), numbers of evaluations of $\pi$ and maximal TT ranks for TT cross (right) plotted against the grid size $n$ in each direction (top) and against the TT tolerance $\delta$ (bottom).}
\label{fig:sa-n-eps}
\resizebox{0.40\linewidth}{!}{%
\begin{tikzpicture}%
  \begin{axis}[%
  xmode=normal,
  ymode=log,
  xlabel=$\log_2 n$,
  ylabel={error indicators},
  legend style={at={(0.01,0.01)},anchor=south west},
  x filter/.code={\pgfmathparse{log2(\pgfmathresult)}\pgfmathresult},
  ]



  \addplot+[] coordinates{(16  , 3.5040e-01)
                          (32  , 1.2884e-01)
                          (64  , 3.4561e-02)
                          (128 , 8.8023e-03)
                          (256 , 2.2062e-03)
                          (512 , 5.6219e-04)};  \addlegendentry{rej. rate};
  \addplot+[] coordinates{(16  , 1.1485e+00)
                          (32  , 2.7020e-01)
                          (64  , 6.2227e-02)
                          (128 , 1.5348e-02)
                          (256 , 4.8348e-03)
                          (512 , 1.3742e-03)};  \addlegendentry{$\tau-1$};

  \addplot+[mark=triangle*] coordinates{(16  , 5.1435e-01)
                          (32  , 1.8008e-01)
                          (64  , 4.8141e-02)
                          (128 , 1.2254e-02)
                          (256 , 3.0817e-03)
                          (512 , 7.7878e-04)};  \addlegendentry{$\mathcal{E}_{L_1}$};

  \addplot+[green!50!black,mark options={green!50!black}] coordinates{(16  , 5.9426e+00)
                          (32  , 2.4899e+00)
                          (64  , 1.5699e+00)
                          (128 , 2.0373e+00)
                          (256 , 1.7890e+00)
                          (512 , 2.8672e+00)};  \addlegendentry{$w^*$};


  \addplot+[no marks, domain=16:512, black] {10.^(-2.0*log10(x)+2.0)}; \addlegendentry{$C\cdot n^{-2}$};
  \end{axis}
 \end{tikzpicture}%
}
\resizebox{0.4275\linewidth}{!}{%
\begin{tikzpicture}%
  \begin{axis}[%
  xmode=normal,
  ymode=normal,
  xlabel=$\log_2 n$,
  ylabel={\#evaluations (millions)},
  legend style={at={(0.99,0.01)},anchor=south east},
  y label style={at={(-0.1,1.0)},rotate=0,color={blue}},every y tick label/.style={blue},
  x filter/.code={\pgfmathparse{log2(\pgfmathresult)}\pgfmathresult},
  ]
  \addplot+[] coordinates{(16  , 1.5395e+05*1e-6)
                          (32  , 9.9354e+05*1e-6)
                          (64  , 2.3821e+06*1e-6)
                          (128 , 3.4818e+06*1e-6)
                          (256 , 4.3931e+06*1e-6)
                          (512 , 4.9131e+06*1e-6)}; \addlegendentry{\#evals}; 

  \end{axis}
  \begin{axis}[%
  xmode=normal,
  ymode=normal,
  ylabel={TT rank},
  legend style={at={(0.01,0.99)},anchor=north west},
  axis y line*=right, y label style={at={(1.1,1.0)},anchor=south east,rotate=0,color={red}},every y tick label/.style={red},
  axis x line=none,
  x filter/.code={\pgfmathparse{log2(\pgfmathresult)}\pgfmathresult},
  ]
  \addplot+[red, mark options={red},mark=+]
                coordinates{
                 (16  , 6.5750e+01)
                 (32  , 1.4700e+02)
                 (64  , 1.9150e+02)
                 (128 , 1.9800e+02)
                 (256 , 2.0650e+02)
                 (512 , 2.1100e+02)
                }; \addlegendentry{TT rank};

  \end{axis}
 \end{tikzpicture}%
}\\
\resizebox{0.40\linewidth}{!}{%
\begin{tikzpicture}%
  \begin{axis}[%
  xmode=normal,
  ymode=log,
  xlabel=$\log_{10}\delta$,
  ylabel={error indicators},
  ymin=5e-8,ymax=1.5e2,
  ytick={1e-6,1e-5,1e-4,1e-3,1e-2,1e-1,1e0,1e1,1e2},
  legend style={at={(0.99,0.01)},anchor=south east},
  x filter/.code={\pgfmathparse{log10(\pgfmathresult)}\pgfmathresult},
  ]

  \addplot+[] coordinates{(5e-1, 5.5345e-02)
                          (1e-1, 1.3639e-02)
                          (3e-2, 7.0873e-03)
                          (1e-2, 4.9726e-03)
                          (3e-3, 2.2395e-03)
                          (1e-3, 1.4705e-03)
                          (3e-4, 9.0384e-04)
                          (1e-4, 6.4492e-04)
                          (3e-5, 5.8618e-04)
                          (1e-5, 5.6219e-04)
                          };  \addlegendentry{rej. rate}; 

  \addplot+[] coordinates{(5e-1, 7.3728e-01)
                          (1e-1, 1.8617e-01)
                          (3e-2, 1.0140e-01)
                          (1e-2, 7.6453e-02)
                          (3e-3, 4.0097e-02)
                          (1e-3, 2.6633e-02)
                          (3e-4, 1.2218e-02)
                          (1e-4, 6.2891e-03)
                          (3e-5, 2.2470e-03)
                          (1e-5, 1.3742e-03)
                          };  \addlegendentry{$\tau-1$}; 

  \addplot+[mark=triangle*] coordinates{
                          (5e-1, 6.7731e-02)
                          (1e-1, 1.5379e-02)
                          (3e-2, 7.8266e-03)
                          (1e-2, 5.4749e-03)
                          (3e-3, 2.5294e-03)
                          (1e-3, 1.6797e-03)
                          (3e-4, 1.1085e-03)
                          (1e-4, 8.5394e-04)
                          (3e-5, 7.9869e-04)
                          (1e-5, 7.7878e-04)
                          };  \addlegendentry{$\mathcal{E}_{L_1}$};

  \addplot+[green!50!black,mark options={green!50!black},mark=star] coordinates{
                          (5e-1, 1.3586e+02)
                          (1e-1, 8.0511e+01)
                          (3e-2, 7.2213e+01)
                          (1e-2, 4.9966e+01)
                          (3e-3, 3.6945e+01)
                          (1e-3, 1.5575e+01)
                          (3e-4, 1.0436e+01)
                          (1e-4, 5.6322e+00)
                          (3e-5, 2.8822e+00)
                          (1e-5, 2.8672e+00)
                          };  \addlegendentry{$w^*$}; 

  \addplot+[dashed,black,mark=diamond*,mark options={black}] coordinates{
                          (5e-1, 3.2315e-02)
                          (1e-1, 9.1142e-03)
                          (3e-2, 3.3682e-03)
                          (1e-2, 2.6205e-03)
                          (3e-3, 8.0710e-04)
                          (1e-3, 2.8202e-04)
                          (3e-4, 8.2302e-05)
                          (1e-4, 4.3589e-05)
                          (3e-5, 1.0073e-05)
                          (1e-5, 1.9999e-06)
                          };  \addlegendentry{$\mathcal{E}_{TT}$};

  \addplot+[solid,no marks, domain=1e-5:1e0, black] {x}; \addlegendentry{$\delta$};
  \end{axis}
 \end{tikzpicture}%
}
\resizebox{0.4275\linewidth}{!}{%
\begin{tikzpicture}%
  \begin{axis}[%
  xmode=normal,
  ymode=normal,
  xlabel=$\log_{10} \delta$,
  ylabel={\#evaluations (millions)},
  legend style={at={(0.01,0.01)},anchor=south west},
  y label style={at={(-0.1,1.0)},rotate=0,color={blue}},every y tick label/.style={blue},
  x filter/.code={\pgfmathparse{log10(\pgfmathresult)}\pgfmathresult},
  ]
  \addplot+[] coordinates{(5e-1, 1.2641e+05*1e-6)
                          (1e-1, 2.1294e+05*1e-6)
                          (3e-2, 2.6405e+05*1e-6)
                          (1e-2, 3.5822e+05*1e-6)
                          (3e-3, 5.4437e+05*1e-6)
                          (1e-3, 7.9554e+05*1e-6)
                          (3e-4, 1.4392e+06*1e-6)
                          (1e-4, 2.2380e+06*1e-6)
                          (3e-5, 3.3326e+06*1e-6)
                          (1e-5, 4.9131e+06*1e-6)
                          }; \addlegendentry{\#evals};
  \end{axis}
  \begin{axis}[%
  xmode=normal,
  ymode=normal,
  ylabel={TT rank},
  legend style={at={(0.99,0.99)},anchor=north east},
  axis y line*=right, y label style={at={(1.1,1.0)},anchor=south east,rotate=0,color={red}},every y tick label/.style={red},
  axis x line=none,
  x filter/.code={\pgfmathparse{log10(\pgfmathresult)}\pgfmathresult},
  ]
  \addplot+[red, mark options={red},mark=+]
                coordinates{(5e-1, 3.2750e+01)
                            (1e-1, 4.1250e+01)
                            (3e-2, 4.3625e+01)
                            (1e-2, 4.5750e+01)
                            (3e-3, 6.6250e+01)
                            (1e-3, 7.5500e+01)
                            (3e-4, 1.0962e+02)
                            (1e-4, 1.3425e+02)
                            (3e-5, 1.5700e+02)
                            (1e-5, 2.1100e+02)
                }; \addlegendentry{TT rank};
  \end{axis}
 \end{tikzpicture}%
}
\end{figure*}
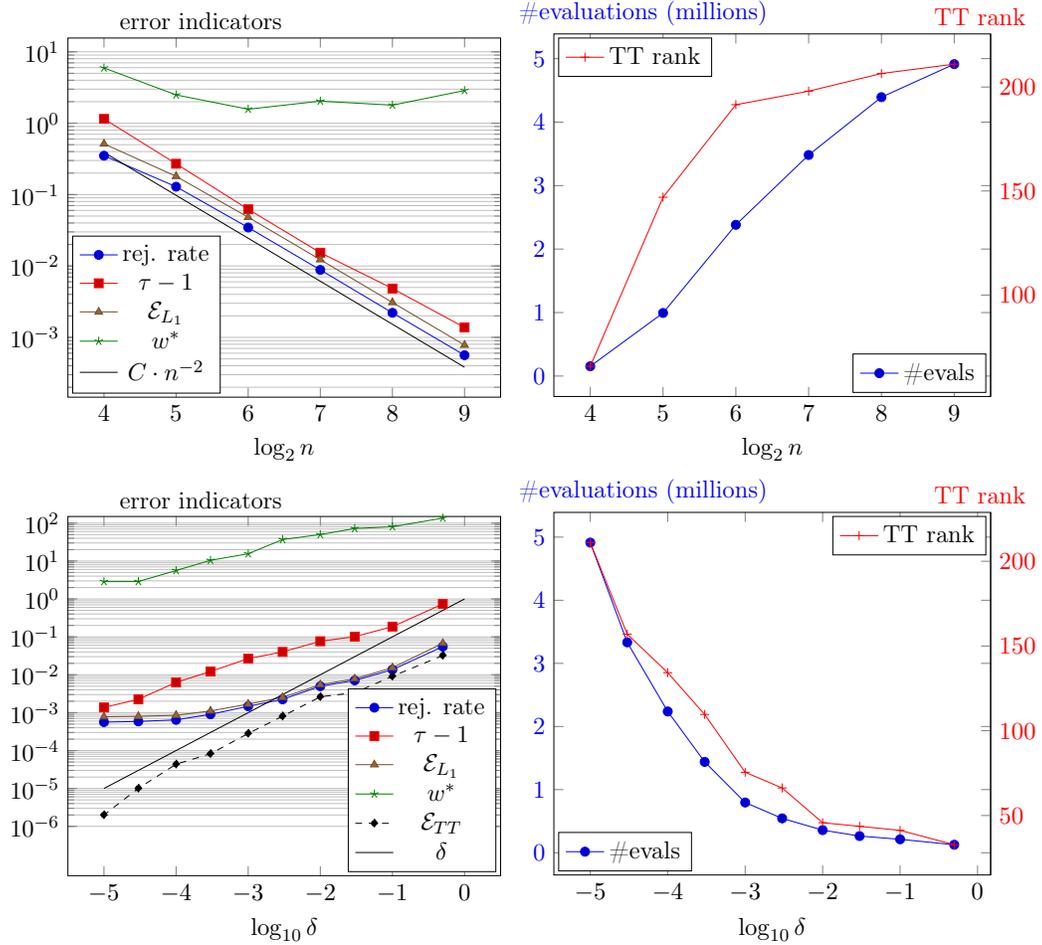

As shown in Lemma \ref{lem:acc_rate}, the rejection rate is expected to be proportional to the approximation error in $L_1$ norm, as this error goes to zero.
The TT approximation is computed on a tensor grid with $n$ vertices and uses linear interpolation to evaluate $\tilde\pi$ at intermediate values.
Thus, it can be expected that the convergence of the interpolation error, as $n \to \infty$, should be of $\mathcal{O}(n^{-2})$, provided $\pi$ is sufficiently smooth.
We can see in Fig.~\ref{fig:sa-n-eps} (top-left) that the rejection rate converges with $\mathcal{O}(n^{-2})$, suggesting that this is the case here.
Bottom-left of Fig.~\ref{fig:sa-n-eps} also suggests that the rejection rate is proportional to the TT approximation error when it is greater than the interpolation error.

The behaviour of the importance ratio and the integrated autocorrelation time (IACT) is more complicated.
The IACT $\tau$ and the essential supremum $w^*$ of the importance ratio are tracked in Fig.~\ref{fig:sa-n-eps} as well.
The TT Cross algorithm tries to reduce the average approximation error.
Pointwise relative error, however, is not guaranteed to be bounded.
Although $w^* \to 1$ as $\delta \rightarrow 0$, it is orders of magnitude larger than $\mathcal{E}_{L_1}$. Regardless, Lemma \ref{lem:iact} seems to give a too pessimistic estimate for the IACT, as the actual value $\tau-1$ is much smaller than $w^*$ and behaves similarly to the rejection rate.

The complexity of the TT cross algorithm (in terms of both the number of evaluations of $\pi$ and the computational time) grows only very mildly (sublinearly) with $\delta$ and $n$ (notice the log-polynomial scale in Fig.~\ref{fig:sa-n-eps}, right).
This makes the TT approach also well scalable for high accuracies.

\subsubsection{Convergence studies and comparison to DRAM}

Now we investigate the convergence of the quantiles and compare TT-MH with the delayed rejection adaptive Metropolis (DRAM) algorithm \cite{Haario-DRAM-2006}. The initial covariance for DRAM is chosen to be the identity matrix. In order to eliminate the effect of the burn-in period, we do not include the first $N/4$ elements of the DRAM chain in the computation of the quantiles. However, we will study the actual burn-in time empirically to have a fairer comparison of the ``set-up cost'' of the two methods.

First, in Table \ref{tab:shock-tau}, we fix $D=6$ covariates and vary the discretization grid $n$ and the TT approximation threshold $\delta$. We present the rejection rates and the IACTs for TT-MH,  with $n=12$, $16$, and $32$ grid points in each direction, using values of $\delta=0.5$ and $\delta=0.05$, as well as for DRAM. In addition, we also give the setup cost in terms of numbers of evaluations of $\pi$, i.e. the number of points needed to construct the TT approximation via the TT cross algorithm for TT-MH and the burn-in
in DRAM.
The latter is estimated as the point of stabilization of $6$ moments of $\beta$ and $\theta_2$, approximated by averaging over $2^{14}$ random initial guesses.
The coarsest TT approximation requires about  $4 \cdot 10^4$ evaluations, whereas DRAM needs a burn-in of about $5 \cdot 10^4$ steps.
\begin{table}[t]
\centering
\caption{Comparison of TT-MH and DRAM; Rejection rate, IACT, and number of function evaluations to set up TT cross and to burn in DRAM for the shock absorber ($D=6$).}
\label{tab:shock-tau}
\begin{tabular}{c|cccc|c}
                & \multicolumn{4}{c|}{TT-MH}                                     & DRAM      \\
$n$             & $12$ & $16$ & $16$ & $32$ & \\
$\delta$  & $0.5$ & $0.5$ & $0.05$ & $0.05$ \\\hline
rej. rate        & 0.61                  & 0.33                  &  0.28                  & 0.12                    &  0.5   \\
$\tau$                & 13.76                  & 4.24                  &  2.94                  & 2.15                    &  24.85  \\ \hline
$N_{setup}$ & 35158                   & 44389                   &  101564                  & 221116                    & 49200 \\
\end{tabular}
\end{table}

\begin{figure*}
\centering
\caption{Shock absorber example ($D=6$): sampling error versus chain length (left) and versus total CPU time (right) for different choices of $n$ and $\delta$ in the TT cross method.}
\label{fig:sa-err}
\resizebox{0.49\linewidth}{!}{%
\begin{tikzpicture}%
  \begin{axis}[%
  xmode=normal,
  ymode=log,
  ymax=0.5,
  xlabel=$\log_{10}N$,
  ylabel={$\mathcal{E}_q$},
  legend style={at={(0.99,0.99)},anchor=north east},
  x filter/.code={\pgfmathparse{log10(\pgfmathresult)}\pgfmathresult},
  ]

  \addplot+[blue,dashed,mark options={scale=0.5},mark=*] coordinates{
        (2^10,   1.3109e-02)
        (2^11,   8.2736e-03)
        (2^12,   6.1296e-03)
        (2^13,   3.5863e-03)
        (2^14,   3.7658e-03)
        (2^15,   2.7119e-03)
        (2^16,   1.5411e-03)
        (2^17,   1.5336e-03)
        (2^18,   9.8920e-04)
        (2^19,   1.2111e-03)
        (2^20,   7.7604e-04)
        (2^21,   5.7430e-04)
        (2^22,   3.9705e-04)
        (2^23,   3.1615e-04)
    };  \addlegendentry{\footnotesize $n,\delta=16,0.5$,MH}; 

  \addplot+[red,dashed,mark options={scale=0.5},mark=square*] coordinates{
        (2^10,   1.7423e-02)
        (2^11,   5.9006e-03)
        (2^12,   6.3667e-03)
        (2^13,   4.2981e-03)
        (2^14,   3.7757e-03)
        (2^15,   2.4400e-03)
        (2^16,   1.2709e-03)
        (2^17,   1.1289e-03)
        (2^18,   1.0004e-03)
        (2^19,   6.1937e-04)
        (2^20,   5.2299e-04)
        (2^21,   3.9008e-04)
        (2^22,   2.3691e-04)
        (2^23,   1.8923e-04)
    };  \addlegendentry{\footnotesize $n,\delta=16,0.5$,qIW}; 

%

  \addplot+[mark options={scale=1},blue,solid,mark=diamond*] coordinates{
      (2^10,   5.5880e-03)
      (2^11,   4.2380e-03)
      (2^12,   4.0199e-03)
      (2^13,   2.4843e-03)
      (2^14,   2.5749e-03)
      (2^15,   1.5804e-03)
      (2^16,   1.4755e-03)
      (2^17,   8.5291e-04)
      (2^18,   6.4929e-04)
      (2^19,   5.1862e-04)
      (2^20,   3.6303e-04)
      (2^21,   2.2746e-04)
      (2^22,   2.0042e-04)
      (2^23,   1.5914e-04)
    };  \addlegendentry{\footnotesize $n,\delta=32,0.05$,MH}; 

  \addplot+[mark options={scale=1},red,solid,mark=triangle*] coordinates{
      (2^10,   2.1104e-03)
      (2^11,   2.0846e-03)
      (2^12,   1.0930e-03)
      (2^13,   1.7273e-03)
      (2^14,   8.6672e-04)
      (2^15,   9.8765e-04)
      (2^16,   5.9068e-04)
      (2^17,   5.1005e-04)
      (2^18,   3.4798e-04)
      (2^19,   3.7588e-04)
      (2^20,   2.4285e-04)
      (2^21,   1.6185e-04)
      (2^22,   1.2452e-04)
      (2^23,   1.2224e-04)
    };  \addlegendentry{\footnotesize $n,\delta=32,0.05$,qIW}; 

  \addplot+[orange,line width=1pt,mark options={scale=1,color=orange},mark=star] coordinates{
     (2^10,   6.6939e-02)
     (2^11,   4.9831e-02)
     (2^12,   2.5010e-02)
     (2^13,   1.0510e-02)
     (2^14,   1.0747e-02)
     (2^15,   5.4859e-03)
     (2^16,   3.9332e-03)
     (2^17,   3.6831e-03)
     (2^18,   2.1467e-03)
     (2^19,   1.5628e-03)
     (2^20,   8.7534e-04)
     (2^21,   6.4369e-04)
     (2^22,   4.9617e-04)
     (2^23,   4.5703e-04)
    };  \addlegendentry{\footnotesize DRAM};

  \addplot+[no marks, domain=1e3:2e7, black,dashed] {x^(-0.5)}; \addlegendentry{\footnotesize $N^{-0.5}$};
  \end{axis}
 \end{tikzpicture}%
}
\resizebox{0.49\linewidth}{!}{%
\begin{tikzpicture}%
  \begin{axis}[%
  xmode=normal,
  ymode=log,
  ymax=0.5,
  xlabel=$\log_{10}\mbox{CPU time}$,
  ylabel={$\mathcal{E}_q$},
  legend style={at={(0.99,0.99)},anchor=north east},
  x filter/.code={\pgfmathparse{log10(\pgfmathresult)}\pgfmathresult},
  ]

  \addplot+[blue,dashed,mark options={scale=0.5},mark=*] coordinates{
        (   9.9824e-01 + 2.0143e-02,   1.3109e-02)
        (   9.9959e-01 + 3.4696e-02,   8.2736e-03)
        (   9.4987e-01 + 6.5639e-02,   6.1296e-03)
        (   9.9922e-01 + 1.2986e-01,   3.5863e-03)
        (   9.7481e-01 + 2.6292e-01,   3.7658e-03)
        (   9.8749e-01 + 5.2631e-01,   2.7119e-03)
        (   9.8112e-01 + 1.0404e+00,   1.5411e-03)
        (   9.7601e-01 + 2.0364e+00,   1.5336e-03)
        (   1.0356e+00 + 4.1762e+00,   9.8920e-04)
        (   1.0032e+00 + 8.3475e+00,   1.2111e-03)
        (   9.9577e-01 + 1.6314e+01,   7.7604e-04)
        (   1.0025e+00 + 3.2534e+01,   5.7430e-04)
        (   9.6541e-01 + 6.4484e+01,   3.9705e-04)
        (   9.8414e-01 + 1.2986e+02,   3.1615e-04)
    };  \addlegendentry{\footnotesize $n,\delta=16,0.5$,MH}; 

  \addplot+[red,dashed,mark options={scale=0.5},mark=square*] coordinates{
        (   9.9824e-01 + 2.0143e-02,   1.7423e-02)
        (   9.9959e-01 + 3.4696e-02,   5.9006e-03)
        (   9.4987e-01 + 6.5639e-02,   6.3667e-03)
        (   9.9922e-01 + 1.2986e-01,   4.2981e-03)
        (   9.7481e-01 + 2.6292e-01,   3.7757e-03)
        (   9.8749e-01 + 5.2631e-01,   2.4400e-03)
        (   9.8112e-01 + 1.0404e+00,   1.2709e-03)
        (   9.7601e-01 + 2.0364e+00,   1.1289e-03)
        (   1.0356e+00 + 4.1762e+00,   1.0004e-03)
        (   1.0032e+00 + 8.3475e+00,   6.1937e-04)
        (   9.9577e-01 + 1.6314e+01,   5.2299e-04)
        (   1.0025e+00 + 3.2534e+01,   3.9008e-04)
        (   9.6541e-01 + 6.4484e+01,   2.3691e-04)
        (   9.8414e-01 + 1.2986e+02,   1.8923e-04)
    };  \addlegendentry{\footnotesize $n,\delta=16,0.5$,qIW}; 

  \addplot+[mark options={scale=1},blue,solid,mark=diamond*] coordinates{
      (   4.4560e+00  + 3.1086e-02,   5.5880e-03)
      (   4.1623e+00  + 5.7736e-02,   4.2380e-03)
      (   4.2826e+00  + 1.1599e-01,   4.0199e-03)
      (   4.1474e+00  + 2.2278e-01,   2.4843e-03)
      (   4.2661e+00  + 4.5029e-01,   2.5749e-03)
      (   4.2923e+00  + 9.0466e-01,   1.5804e-03)
      (   4.2332e+00  + 1.7948e+00,   1.4755e-03)
      (   3.8803e+00  + 3.4390e+00,   8.5291e-04)
      (   4.0667e+00  + 7.1232e+00,   6.4929e-04)
      (   4.2129e+00  + 1.4345e+01,   5.1862e-04)
      (   4.9182e+00  + 2.8557e+01,   3.6303e-04)
      (   3.8228e+00  + 5.4807e+01,   2.2746e-04)
      (   4.3581e+00  + 1.1613e+02,   2.0042e-04)
      (   4.1102e+00  + 2.2484e+02,   1.5914e-04)
    };  \addlegendentry{\footnotesize $n,\delta=32,0.05$,MH}; 

  \addplot+[mark options={scale=1},red,solid,mark=triangle*] coordinates{
      (   4.4560e+00 +  3.1086e-02,   2.1104e-03)
      (   4.1623e+00 +  5.7736e-02,   2.0846e-03)
      (   4.2826e+00 +  1.1599e-01,   1.0930e-03)
      (   4.1474e+00 +  2.2278e-01,   1.7273e-03)
      (   4.2661e+00 +  4.5029e-01,   8.6672e-04)
      (   4.2923e+00 +  9.0466e-01,   9.8765e-04)
      (   4.2332e+00 +  1.7948e+00,   5.9068e-04)
      (   3.8803e+00 +  3.4390e+00,   5.1005e-04)
      (   4.0667e+00 +  7.1232e+00,   3.4798e-04)
      (   4.2129e+00 +  1.4345e+01,   3.7588e-04)
      (   4.9182e+00 +  2.8557e+01,   2.4285e-04)
      (   3.8228e+00 +  5.4807e+01,   1.6185e-04)
      (   4.3581e+00 +  1.1613e+02,   1.2452e-04)
      (   4.1102e+00 +  2.2484e+02,   1.2224e-04)
    };  \addlegendentry{\footnotesize $n,\delta=32,0.05$,qIW}; 

  \addplot+[orange,line width=1pt,mark options={scale=1,color=orange},mark=star] coordinates{
  (6.2536e-01,   6.6939e-02)
  (1.0645e+00,   4.9831e-02)
  (1.8647e+00,   2.5010e-02)
  (3.5013e+00,   1.0510e-02)
  (6.6346e+00,   1.0747e-02)
  (1.2929e+01,   5.4859e-03)
  (2.5431e+01,   3.9332e-03)
  (5.0523e+01,   3.6831e-03)
  (1.0141e+02,   2.1467e-03)
  (2.0066e+02,   1.5628e-03)
  (4.0092e+02,   8.7534e-04)
  (8.0025e+02,   6.4369e-04)
  (1.6053e+03,   4.9617e-04)
  (3.1982e+03,   4.5703e-04)
  };  \addlegendentry{\footnotesize DRAM};

  \addplot+[no marks, domain=1e0:1e4, black,dashed] {1e-2*x^(-0.5)}; \addlegendentry{\footnotesize $C\cdot \mathcal{W}^{-0.5}$};
  \end{axis}
 \end{tikzpicture}%
}
\end{figure*}
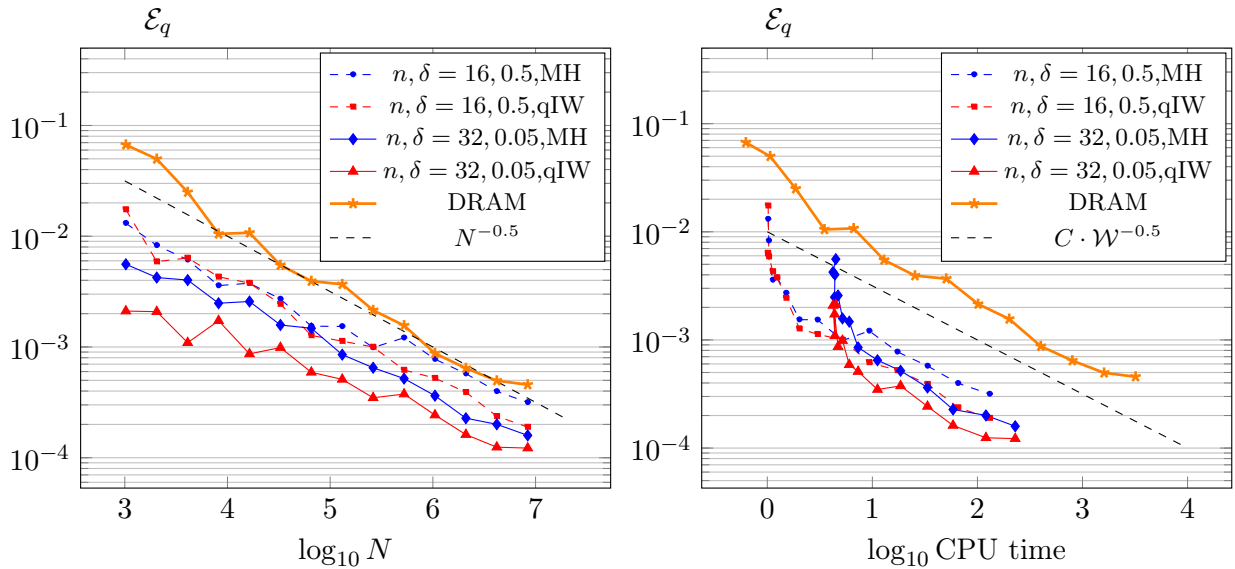

Next, in Fig. \ref{fig:sa-err} (left) we show the estimate $\mathcal{E}_q$ of the quadrature error defined in \eqref{eq:err_q1q2} for the two quantities of interest in  \eqref{eq:shock_qois}, versus the total number $N$ of samples in the MCMC chain, which is varied from $2^{10}$ to $2^{23}$.
We see that both MH methods (i.e. TT-MH and DRAM) converge with a rate of $N^{-1/2}$, as expected. To keep the set-up cost of the TT approximation low, we only consider fairly crude TT approximations (as in Tab.~\ref{tab:shock-tau}). However, all our approximations deliver a smaller sampling error for TT-MH than for DRAM when measured against the number of samples, and an even greater reduction when plotted against CPU time (Fig.~\ref{fig:sa-err}, right).
More accurate TT approximations require more evaluations of $\pi$ during the set-up in TT Cross, up to $2.5 \cdot 10^5$ for $\delta=0.05$ and $n=32$. This set-up cost is clearly visible in the vertical off-set of the curves in Fig.~\ref{fig:sa-err} (right). It exceeds the burn-in cost in DRAM.
However, TT-MH is much faster than DRAM for the same number of evaluations,
which yields a significant difference in terms of the total CPU time.

There are several reasons for this. For higher TT accuracies, the gains are mainly due to the significantly lower IACT of TT-MH, leading to a much better statistical efficiency of the MCMC chain. For low TT accuracies, the IACT of the TT-MH algorithm is still half of that for DRAM and in addition, there is some gain due to the reduced set-up cost. A further reason is the vectorization that is exploited in TT cross, where a block of $\mathcal{O}(nr^2)$ samples is evaluated in each step. In DRAM, the function needs to be evaluated point by point in order to perform the rejection.
Therefore, the number of \emph{distinct} calls to $\pi$ in TT cross is much smaller than $N$, reducing the corresponding overhead in Matlab.
In compiled languages (C, Fortran) on a single CPU, the difference may be less significant. However, parallel implementations will also benefit from the blocking, especially when each sample is expensive. More accurate TT approximations are worthwhile to compute if a highly accurate estimate of the expected value is required, since in that case the length of the MCMC chain will dominate the number of samples in the set-up phase.

In Fig. \ref{fig:sa-err}, we also present results with the TT-qIW approach described in Sec.~\ref{sec:isqmc},
where the approximate density $\ttpi$ is used as an importance weight and where the expected value and the normalizing constant are estimated via QMC quadrature.
In particular, we use a randomized rank-1 lattice rule with product weight parameters $\gamma_k = 1/k^2$.
The generating vector was taken from the file {\tt \verb+lattice-39102-1024-1048576.3600+}, available at {\tt \verb+http://web.maths.unsw.edu.au/~fkuo/+}.
Due to the non-smooth dependence of quantiles on the covariates,
the rate of convergence for TT-qIW with respect to $N$ is not improved in this example,
but in absolute terms it consistently outperforms TT-MH, leading to even bigger gains over DRAM.

Finally, we fix the TT and the MCMC parameters to $n=16$, $\delta=0.05$ and $N=2^{22}$ and vary the number of covariates $D$,
and hence the total dimensionality $d=D+2$.
In Fig. \ref{fig:sa-d}, we show the error in the quantiles, the number of evaluations of $\pi$, as well as the autocorrelation times and TT ranks.
We see that the TT ranks are almost independent of $d$,
and the TT-MH approach remains more efficient than DRAM over a wide range of dimensions.

\begin{figure*}
\centering
\caption{Shock absorber example: Error (left), number of $\pi$ evaluations during the proposal stage (middle) and IACT (right), for different numbers of covariates and $n=16$, $\delta=0.05$, $N=2^{22}$.
Numbers above points in the middle plot denote TT ranks.
}
\label{fig:sa-d}
\resizebox{0.32\linewidth}{!}{%
\begin{tikzpicture}
 \begin{axis}[%
  xmode=log,
  ymode=log,
  ymin=5e-5,ymax=3e-3,
  xlabel=$D$,
  ylabel={$\mathcal{E}_q$},
  legend style={at={(0.99,0.01)},anchor=south east},
  ]


  \addplot+[] coordinates{
                          (1 ,  1.9708e-04)
                          (2 ,  1.3119e-04)
                          (3 ,  1.6805e-04)
                          (4 ,  1.6885e-04)
                          (5 ,  2.1433e-04)
                          (6 ,  2.6397e-04)
                          (7 ,  4.2550e-04)
                          (8 ,  3.9207e-04)
                          (9 ,  5.8234e-04)
                          (10,  5.7664e-04)
                          (11,  4.5829e-04)
                          (12,  5.1226e-04)
                          (13,  5.6835e-04)
                          (14,  9.0095e-04)
                          (15,  6.6968e-04)
                         }; \addlegendentry{TT-MH}; 

  \addplot+[] coordinates{
                          (1 ,  2.2631e-04)
                          (2 ,  4.3846e-04)
                          (3 ,  3.8441e-04)
                          (4 ,  4.7314e-04)
                          (5 ,  5.0224e-04)
                          (6 ,  4.9617e-04)
                          (7 ,  8.1043e-04)
                          (8 ,  5.0537e-04)
                          (9 ,  6.2053e-04)
                          (10,  6.1985e-04)
                          (11,  7.9118e-04)
                          (12,  8.7507e-04)
                          (13,  9.7497e-04)
                          (14,  1.0842e-03)
                          (15,  1.3008e-03)
                         }; \addlegendentry{DRAM}; 

 \end{axis}
\end{tikzpicture}
}
\resizebox{0.32\linewidth}{!}{%
\begin{tikzpicture}
 \begin{axis}[%
  xmode=log,
  ymode=log,
  xlabel=$D$,
  ytick={1e4,1e5,1e6},
  yticklabels={$^{\phantom{-}}10^4$,$10^5$,$10^6$},
  ylabel={$N_{proposal}$},
  legend style={at={(0.99,0.01)},anchor=south east},
  nodes near coords,point meta=explicit symbolic,
  every node near coord/.append style={anchor=south},
  ]

  \addplot+[] coordinates{
                          (1 ,  8.1160e+03)[14]
                          (2 ,  1.9479e+04)[16]
                          (3 ,  6.1585e+04)[18]
                          (4 ,  8.3087e+04)[18]
                          (5 ,  9.3551e+04)
                          (6 ,  1.0766e+05)[19]
                          (7 ,  1.4510e+05)
                          (8 ,  1.4466e+05)[17]
                          (9 ,  1.5131e+05)
                          (10,  1.5973e+05)[17]
                          (11,  1.4154e+05)
                          (12,  1.7632e+05)
                          (13,  1.6482e+05)
                          (14,  1.7277e+05)
                          (15,  1.8404e+05)[17]
                         }; \addlegendentry{TT-MH}; 

  \addplot+[] coordinates{
                          (1 ,  7.1471e+06 - 2^22)
                          (2 ,  7.2563e+06 - 2^22)
                          (3 ,  7.3077e+06 - 2^22)
                          (4 ,  7.3529e+06 - 2^22)
                          (5 ,  7.3609e+06 - 2^22)
                          (6 ,  7.3577e+06 - 2^22)
                          (7 ,  7.3843e+06 - 2^22)
                          (8 ,  7.3942e+06 - 2^22)
                          (9 ,  7.4006e+06 - 2^22)
                          (10,  7.3992e+06 - 2^22)
                          (11,  7.4005e+06 - 2^22)
                          (12,  7.4045e+06 - 2^22)
                          (13,  7.4083e+06 - 2^22)
                          (14,  7.4053e+06 - 2^22)
                          (15,  7.4088e+06 - 2^22)
                         }; \addlegendentry{DRAM}; 
 \end{axis}
\end{tikzpicture}
}
\resizebox{0.32\linewidth}{!}{%
\begin{tikzpicture}
 \begin{axis}[%
  xmode=log,
  ymode=log,
  xlabel=$D$,
  ylabel={$\tau$},
  ytick={1e1,1e2},
  yticklabels={$^{\phantom{-}}10^1$,$10^2$},
  ymax=150,
  legend style={at={(0.01,0.99)},anchor=north west},
  ]

  \addplot+[] coordinates{
                          (1 ,  5.3302e+00)
                          (2 ,  2.3840e+00)
                          (3 ,  2.2517e+00)
                          (4 ,  2.3780e+00)
                          (5 ,  3.0751e+00)
                          (6 ,  2.9446e+00)
                          (7 ,  3.7976e+00)
                          (8 ,  5.6859e+00)
                          (9 ,  7.6432e+00)
                          (10,  6.5584e+00)
                          (11,  6.0049e+00)
                          (12,  4.8882e+00)
                          (13,  5.1203e+00)
                          (14,  8.0283e+00)
                          (15,  7.2548e+00)
                         }; \addlegendentry{TT-MH}; 

  \addplot+[] coordinates{
                          (1 ,  9.1186e+00)
                          (2 ,  1.2813e+01)
                          (3 ,  1.6324e+01)
                          (4 ,  2.0124e+01)
                          (5 ,  2.3100e+01)
                          (6 ,  2.4802e+01)
                          (7 ,  2.9624e+01)
                          (8 ,  3.3845e+01)
                          (9 ,  3.8141e+01)
                          (10,  4.1504e+01)
                          (11,  4.4962e+01)
                          (12,  4.9618e+01)
                          (13,  5.4101e+01)
                          (14,  5.8414e+01)
                          (15,  6.4715e+01)
                         }; \addlegendentry{DRAM}; 
 \end{axis}
\end{tikzpicture}
}
\end{figure*}
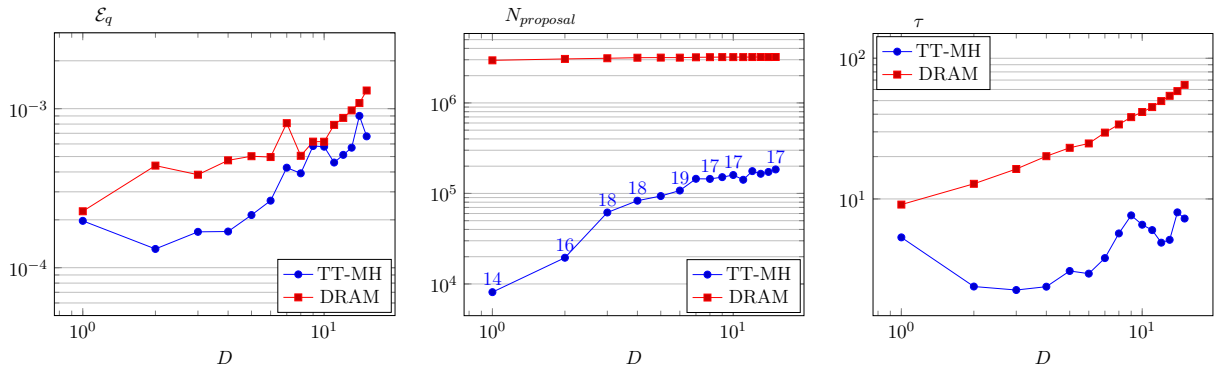

\subsection{Rosenbrock function}
\label{sec:rf}

As a benchmark example with particularly long tails (and hence potentially large autocorrelation times in MCMC), we consider the PDF induced by the Rosenbrock function $\pi(\theta) \propto \exp\left(-\frac{1}{2} r(\theta)\right)$, where
\begin{equation}
r(\theta) = \sum_{k=1}^{d-1}\left[\theta_k^2 + \left(\theta_{k+1} + 5 \cdot (\theta_{k}^2+1)\right)^2 \right].
\label{eq:rosen}
\end{equation}
The dimension $d$ can be increased arbitrarily.
The parameters for the TT approximation are chosen to be $\delta=3 \cdot 10^{-3}$ and $n=128$ for $\theta_1,\ldots,\theta_{d-2}$, $n=512$ for $\theta_{d-1}$ and $n=4096$ for $\theta_d$. Each $\theta_k$ is restricted to a finite interval $[-a_k,a_k]$, where $a_d=200$, $a_{d-1}=7$ and $a_k = 2$ otherwise.

Fig. \ref{fig:rosen-samples} shows certain projections of $N=2^{17}$ sampling points produced with TT-MH and DRAM for $d=32$.
\begin{figure*}
\centering
\caption{Rosenbrock function ($d=32$): $N=2^{17}$ samples projected to the $(\theta_1,\theta_2)$- (left), the $(\theta_{30},\theta_{31})$- (middle) and the  $(\theta_{31},\theta_{32})$-plane (right); TT-MH (blue) and DRAM (red).}
\label{fig:rosen-samples}
\ifrosensamples
\resizebox{0.32\linewidth}{!}{%
\begin{tikzpicture}
\begin{axis}
\addplot[%
    scatter=true,
    only marks,
    mark=*,
    mark size=0.5,
    opacity=0.5,
    color=blue,
    scatter/use mapped color={draw=none,fill=blue},
]  table[header=false,x index=0, y index=1]{rosen-chains-d32.dat};
\addplot[%
    scatter=true,
    only marks,
    mark=*,
    mark size=0.5,
    opacity=0.5,
    color=red,
    scatter/use mapped color={draw=none,fill=red},
]  table[header=false,x index=32, y index=33]{rosen-chains-d32.dat};
\end{axis}
\end{tikzpicture}
}
\resizebox{0.32\linewidth}{!}{%
\begin{tikzpicture}
\begin{axis}
\addplot[%
    scatter=true,
    only marks,
    mark=*,
    mark size=0.5,
    opacity=0.5,
    color=blue,
    scatter/use mapped color={draw=none,fill=blue},
]  table[header=false,x index=29, y index=30]{rosen-chains-d32.dat};
\addplot[%
    scatter=true,
    only marks,
    mark=*,
    mark size=0.5,
    opacity=0.5,
    color=red,
    scatter/use mapped color={draw=none,fill=red},
]  table[header=false,x index=61, y index=62]{rosen-chains-d32.dat};
\end{axis}
\end{tikzpicture}
}\resizebox{0.32\linewidth}{!}{%
\begin{tikzpicture}
\begin{axis}[legend style={at={(0.99,0.01)},anchor=south east}]
\addplot[%
    scatter=true,
    only marks,
    mark=*,
    mark size=0.5,
    opacity=0.5,
    color=blue,
    scatter/use mapped color={draw=none,fill=blue},
]  table[header=false,x index=30, y index=31]{rosen-chains-d32.dat}; \addlegendentry{TT-MH};
\addplot[%
    scatter=true,
    only marks,
    mark=*,
    mark size=0.5,
    opacity=0.3,
    color=red,
    scatter/use mapped color={draw=none,fill=red},
]  table[header=false,x index=62, y index=63]{rosen-chains-d32.dat}; \addlegendentry{DRAM};
\end{axis}
\end{tikzpicture}
}
\else
\includegraphics[width=0.32\linewidth]{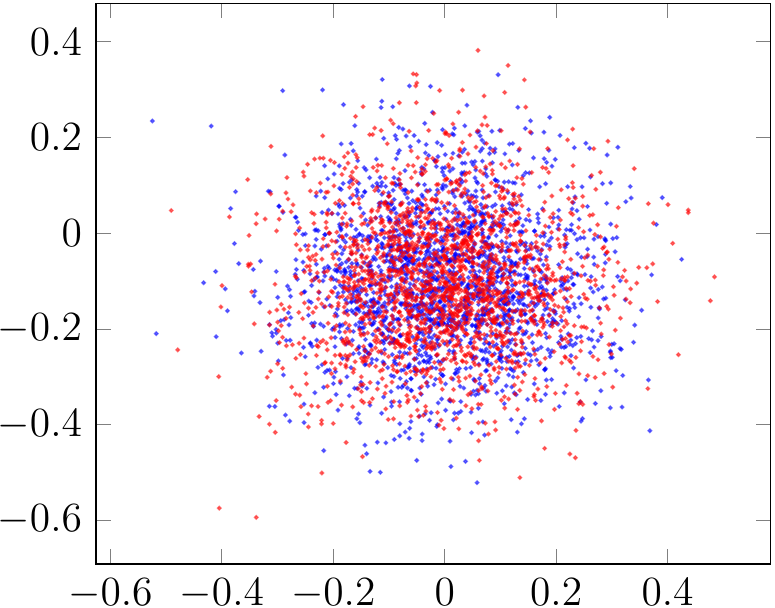}
\includegraphics[width=0.307\linewidth]{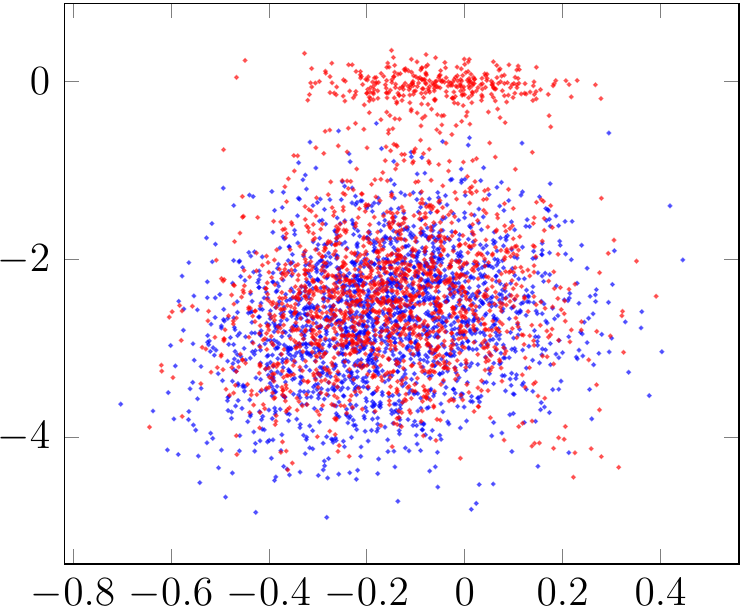}
\includegraphics[width=0.323\linewidth]{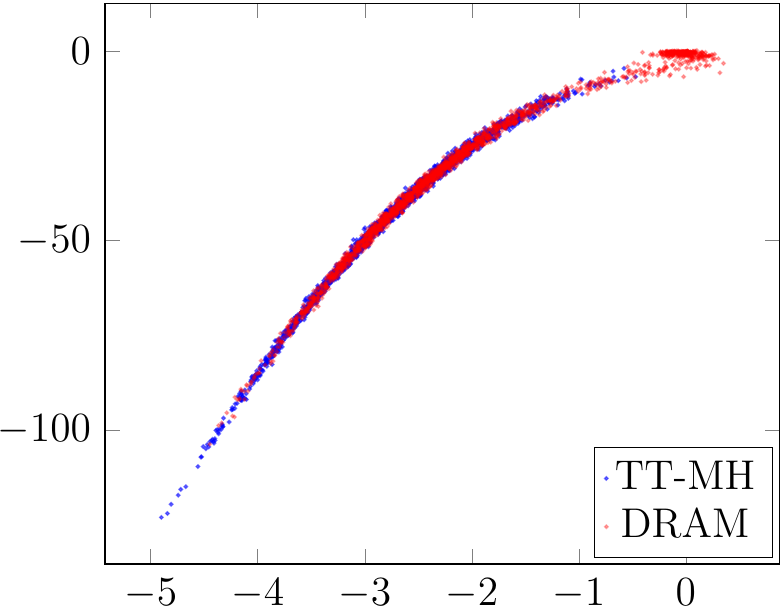}
\fi
\end{figure*}
We see that although the density function is reasonably compact and isotropic in the first variables, it is highly concentrated in the last variable.
DRAM requires a significant number of burn-in iterations, which can be seen in Fig. \ref{fig:rosen-samples} (middle and right) as the red cloud of samples that are not overlapped by blue ones.
In order to eliminate the burn-in in DRAM, we compute $2^{20}$ samples and discard the first quarter of the chain. The difference is even more significant if we look at the integrated autocorrelation times in Tab. \ref{tab:rosen-tau}. For TT-MH the IACT stays close to $1$ for all considered dimensions, while for DRAM it exceeds 100 for larger $d$.
\begin{table}[t]
\centering
\caption{Rosenbrock function example: IACT.}
\label{tab:rosen-tau}
\begin{tabular}{c|ccccc}
$d$    & 2        & 4           & 8             & 16       & 32 \\ \hline
TT-MH     & 1.096   & 1.080      & 1.100        & 1.079   & 1.084   \\
DRAM   & 61.54  & 26.63     & 45.01       & 84.02  & 169.57
\end{tabular}
\end{table}

\subsection{Inverse diffusion problem}
\label{sec:ff}

Finally, we use our new TT-CD sampler to explore the  posterior distribution arising from a Bayesian formulation of an infinite-dimensional inverse problem,
as formalized in~\cite{stuart-bayes-2010}.

Let $X$ and $V$ be two infinite-dimensional function spaces -- it is sufficient to consider separable Banach spaces --  and let $\mathcal G: X \rightarrow V$ be a (measurable and well-posed) forward map. Consider the inverse problem of finding $\kappa \in X$, an input to $\mathcal G$, given some noisy observations $y \in \mathbb{R}^{m_0}$ of some functionals of the output $u \in V$. In particular, we assume a (measurable) observation operator $Q: V \rightarrow \mathbb{R}^{m_0}$, such that
$$
y = Q(\mathcal G(\kappa)) + \eta,
$$
where $\eta \in \mathbb{R}^{m_0}$ is a mean-zero random variable that denotes the observational noise.
The inverse problem is clearly under-determined when $m_0 \ll \text{dim} (X)$ and in most mathematical models the inverse of the map $\mathcal G$ is ill-posed.

We do not consider prior modelling in any detail, and present here a stylized Bayesian formulation designed to highlight the computational structure and cost. We simply state a prior measure $\mu_0$, to model $\kappa$ in the absence of observations $y$.
The posterior distribution $\mu^y$ over $\kappa | y$, the unknown coefficients conditioned on observed data, is given by Bayes' theorem for general measure spaces,
\begin{equation}
\label{eq:bayes}
\frac{d\mu^y}{d\mu_0} (u) = \frac{1}{Z}L(\kappa),
\end{equation}
where the left hand side is the Radon-Nikodym derivative, $L$ is the likelihood function, and $Z$ is the normalizing constant \cite{stuart-bayes-2010}. 

For computing, we have to work with a finite dimensional approximation $\kappa_d \in X_d \subset X$ of the latent field $\kappa$ such that $\text{dim} (X_d) = d \in \mathbb{N}$, and define $\kappa_d$ as a deterministic function of a $d$-dimensional parameter $\theta := (\theta_1,\ldots,\theta_d)$. Typically, we require that $\kappa_d \to \kappa$ as $d \to \infty$, but we will not focus on that convergence here and instead fix $d \gg 1$.
To be able to apply the TT representation, we set $\theta_k \in [a_k,b_k]$ with $a_k < b_k$, for all $k=1,\ldots,d$, and then $\kappa_d$ maps the tensor-product domain $\Gamma_d := \prod_{k=1}^d [a_k,b_k]$ to $X_d$. We denote by $\pi_0(\theta)$ and $\pi(\theta) = \pi^y(\theta)$ the probability density functions of the pull-back measures of the prior and posterior measures $\mu_0$ and $\mu^y$ under the map $\kappa_d: \Gamma_d \to X_d$, respectively, and specify that map so that $\pi_0(\theta) = 1/|\Gamma_d|$, i.e. the prior distribution over $\theta$ is uniform.

We can then compute TT approximations of the posterior density $\pi(\theta)$ as in the previous examples by using Bayes' formula \eqref{eq:bayes}, i.e.
$$
\pi(\theta) = \frac{1}{Z} L(\kappa_d(\theta)), \quad \text{where} \quad Z = \int\limits_{\Gamma_d} L(\kappa_d(\theta)) d\pi_0(\theta)\,.
$$

Consider some quantity of interest in the form of another functional $F: V \rightarrow \mathbb{R}$ of the model output $\mathcal G(\kappa_d)$. The posterior expectation of $F$, conditioned on measured $y$, can be computed as
\begin{equation}
\mathbb{E}_{\pi}\left[ F(\mathcal G(\kappa_d))\right] = \frac{\mathbb{E}_{\pi_0}\left[L(\kappa_d) F(\mathcal G(\kappa_d))\right]}{\mathbb{E}_{\pi_0} \left[ L(\kappa_d)\right]}.
\label{eq:post_inf}
\end{equation}

\subsubsection{Stylized elliptic problem and parametrization}

As an example, we consider the forward map defined by the stochastic diffusion equation
\begin{equation}
-\nabla \cdot \big(\kappa_d(\theta) \nabla u\big) = 0 \quad \mbox{on} \quad D:= (0,1)^2,
\label{eq:pdeproblem}
\end{equation}
with Dirichlet boundary conditions $u|_{x_1=0}=1$ and $u|_{x_1=1}=0$, as well as homogeneous Neumann conditions otherwise \cite{scheichl-qmc-bayes-2017}, which depends on an unknown (parametrized) diffusion coefficient $\kappa_d \in X_d \subset L_\infty(D)$.

For this example, we take each of the parameters $\theta_k$, $k=1,\ldots,d,$ to be uniformly distributed on $[-\sqrt{3},\sqrt{3}]$. 
Then, for any $\theta \in \Gamma_d$ and $x = (x_1,x_2) \in D$,  the logarithm of the diffusion coefficient at $x$ is defined by the following expansion:
\begin{equation}
\begin{split}
\ln \kappa_d(\theta, x) & = \sum_{k=1}^{d} \theta_k \, \sqrt{\eta_k} \, \cos(2\pi \rho_1(k) x_1) \cos(2\pi \rho_2(k) x_2),\\
\rho_1(k) & = k - \tau(k)\frac{(\tau(k)+1)}{2}, \ \  \rho_2(k) = \tau(k)-\rho_1(k), \\
\tau(k) & = \left\lfloor -\frac{1}{2} + \sqrt{\frac{1}{4}+2k} \right\rfloor \ \text{and} \\
\eta_k & = k^{-(\nu+1)}/K, \quad K = \sum_{k=1}^{d} k^{-(\nu+1)}.
\end{split}
\label{eq:kle_art}
\end{equation}
The expansion is similar to the one proposed in \cite{eigel-adapt-stoch-fem-2014}, and 
mimics the asymptotic behaviour of the Karhunen-Lo\`eve expansion of random fields with Mat\'ern covariance function and smoothness parameter $\nu$ in two dimensions, in that the norms of the individual terms decay algebraically with the same rate. However, realizations do not have the same qualitative features and we use it purely to demonstrate the computational efficiency of our new TT samplers.

To discretize the partial differential equation (PDE) in \eqref{eq:pdeproblem} we tessellate the spatial domain $D$ with a uniform rectangular grid $T_h$ with mesh size $h$. Then, we approximate the exact solution $u \in V := H^1(D)$ that satisfies the Dirichlet boundary conditions with the continuous, piecewise bilinear finite element (FE) approximation $u_h \in V_h$ associated with $T_h$. To find $u_h$ we solve the resulting Galerkin system using a sparse direct solver.

For this example, we take the observations to be $m_0$ noisy local averages of the PDE solution over some subsets $D_{i} \subset D$, $i=1,\ldots,m_0$, i.e.,
$$
Q_{i}(\mathcal{G}(\theta)) = \frac{1}{|D_{i}|}\int_{D_i} u_h(x,\theta) dx, \quad i=1,\ldots,m_0\,.
$$
We take observation noise to be additive, distributed as i.i.d. zero-mean Gaussian noise with variance $\sigma_n^2$, giving the likelihood function,
$$
L(\theta) = \exp\left(-\frac{\left| Q(\mathcal{G}(\theta)) - y\right|^2}{2\sigma_n^2}\right),
$$
and posterior distribution
$\pi(\theta) = \frac{1}{Z} L(\theta)$, with the normalization constant $Z = \int\nolimits_{[-\sqrt{3},\sqrt{3}]^d} L(\theta) d\theta.$

In our experiments, the sets $D_{i}$ are square domains with side length $2/(\sqrt{m_0}+1)$, centred at the interior vertices of a uniform Cartesian grid on $D=[0,1]^2$ with grid size $1/(\sqrt{m_0}+1)$, that form an overlapping partition of $D$. We consider an academic problem with synthetic data for these $m_0$ local averages from some ``true'' value $\theta_*$. In particular, we evaluate the observation operator at  $\theta_* =(\theta_0,,\theta_0,\ldots,\theta_0)$, for some fixed $0 \not=\theta_0 \in (-\sqrt{3},\sqrt{3})$,  and synthesize data by then adding independent normally distributed noise $\eta_* \sim \mathcal{N}(0,\sigma_n^2 I)$, such that $y = Q(\mathcal{G}(\theta_*)) + \eta_*$.

We consider two quantities of interest. The first is the average flux at $x^1=1$. This can be computed as~\cite{scheichl-mlmc-further-2013}
\begin{equation}
\label{def_flux}
\begin{split}
F(\mathcal{G}(\theta)) & = -\int_{0}^{1}\int_{0}^{1} \kappa_d(x,\theta) \nabla w_h(x) \nabla u_h(x,\theta) dx \\
& = -\int_{0}^{1} \kappa_d(x,\theta) \left.\frac{\partial u_h(x,\theta)}{\partial \mathbf{n}}\right|_{x^1=1}dx^2,
\end{split}
\end{equation}
where $w_h \in V_h$ is any FE function that satisfies the Dirichlet conditions at $x^1=0$ and $x^1=1$. This formula for the average flux is a smooth function of $\theta$, which ensures a fast convergence for QMC-based quadrature rules, with an order close to $N^{-1}$.
However, we also consider the discontinuous indicator function $\mathbb{I}_{F(\theta)>1.5}$, to estimate the probability that the average flux in \eqref{def_flux} becomes larger than $1.5$, i.e.,
$$
P_{F>1.5} = \mathrm{Prob}\left(F(\mathcal{G}(\theta))>1.5\right) = \mathbb{E}_{\pi} \left[\mathbb{I}_{F(\theta)>1.5}\right].
$$
As we shall see, the non-smoothness of $\mathbb{I}_{F(\theta)>1.5}$ reduces the order of convergence of the QMC quadrature to the basic Monte Carlo rate $N^{-1/2}$.
For the same reason, this function lacks a low-rank TT decomposition, and hence we cannot compute its expectation using a tensor product quadrature directly.
The mean field flux $F|_{\theta=0}=1$ (in the units used), and the probability $P_{F>1.5}$ are both of the order of $0.1$.

The default parameters used in the stochastic model and for function approximation are shown in Table \ref{tab:ff-par}. We will make it clear when we change any of those default parameters.
\begin{table}[tbh]
\centering
\caption{Default model and discretization parameters for the inverse diffusion example.}
\label{tab:ff-par}
\begin{tabular}{ccc|cc|ccc}
   $\nu$ & $\sigma_n^2$   & $\theta_0$ & $m_0$  & $h$       & $d$           & $\delta$  & $n$   \\\hline
  $2$ & $0.01$         & $1.5$ & $9$  &  $2^{-6}$  & $11$          & $0.1$     & $32$    \\
\end{tabular}
\end{table}

The TT approximation $\tilde\pi$ can be computed directly by the TT cross algorithm, as in the previous examples. For a TT tolerance of $\delta = 0.1$, this requires about $10^4-10^5$ evaluations of $\pi$. However, since here the computation of each value of $\pi(\theta)$ involves the numerical solution of the PDE \eqref{eq:pdeproblem} this leads to a significant set-up time.
This set-up time can be hugely reduced, by first building a TT approximation $\tilde u_h(\cdot,\theta)$ of the FE solution $u_h(\cdot,\theta)$ and then using $\tilde u_h(\cdot,\theta)$ in the TT cross algorithm for building $\tilde\pi$ instead of $u_h(\cdot,\theta)$.

It was shown in \cite{ds-alscross-2019} that a highly accurate approximation of $u_h(\cdot,\theta)$ in the TT format can be computed using a variant of the TT cross algorithm, the alternating least-squares cross (ALS-cross) algorithm, that only requires $\mathcal{O}(r)$ PDE solves, if the TT ranks to approximate $u_h(\cdot,\theta)$ up to the discretization error are bounded by $r$. Moreover, the rank grows only logarithmically with the required accuracy. We will see, below, that $r < 100$ for this model problem for $h=2^{-6}$, significantly reducing the number of PDE solves required in the set-up phase.

Since the observation operator $Q$ consists of integrals of the PDE solution over subdomains of the spatial domain $D$, when applied to a function given in TT format it can be evaluated at a cost that is smaller than $r$ PDE solves on $T_h$ without any increase in the TT rank \cite{ds-alscross-2019}.  Finally, to compute an approximation of $\pi$ via the TT cross algorithm we use the significantly cheaper TT surrogate $Q(\tilde u_h(\cdot,\theta))$ in each evaluation of $\pi(\theta)$ instead of computing the actual FE solution $u_h(\cdot,\theta)$. Since $\tilde u_h(\cdot,\theta)$ is accurate up to the FE discretization error in $V_h$ -- which in this model problem for $h=2^{-6}$ is of $\mathcal{O}(10^{-4})$ -- this has essentially no impact on the accuracy of the resulting TT approximation~$\tilde\pi$ (especially for TT accuracy $\delta=0.1$).

\subsubsection{Set-up cost and accuracy of TT approximation}

As in the shock absorber example, we test how the quality of the Markov chain produced by TT-MH depends on the error between $\tilde\pi$ and $\pi$. In Figure \ref{fig:ff-eps} (left), we show the rejection rates, IACT and error estimates \eqref{eq:eps},\eqref{eq:epsl1} for different stopping tolerances $\delta$ and grid sizes $n$. In the top plot, we fix $\delta=10^{-3}$ and vary $n$, while in the bottom plot, $n$ is fixed to $512$ and $\delta$ is varied. The other model parameters are set according to Table \ref{tab:ff-par}, and the chain length is $N=2^{16}$. The behaviour is as in the shock absorber example and as predicted in Lemma \ref{lem:acc_rate}.
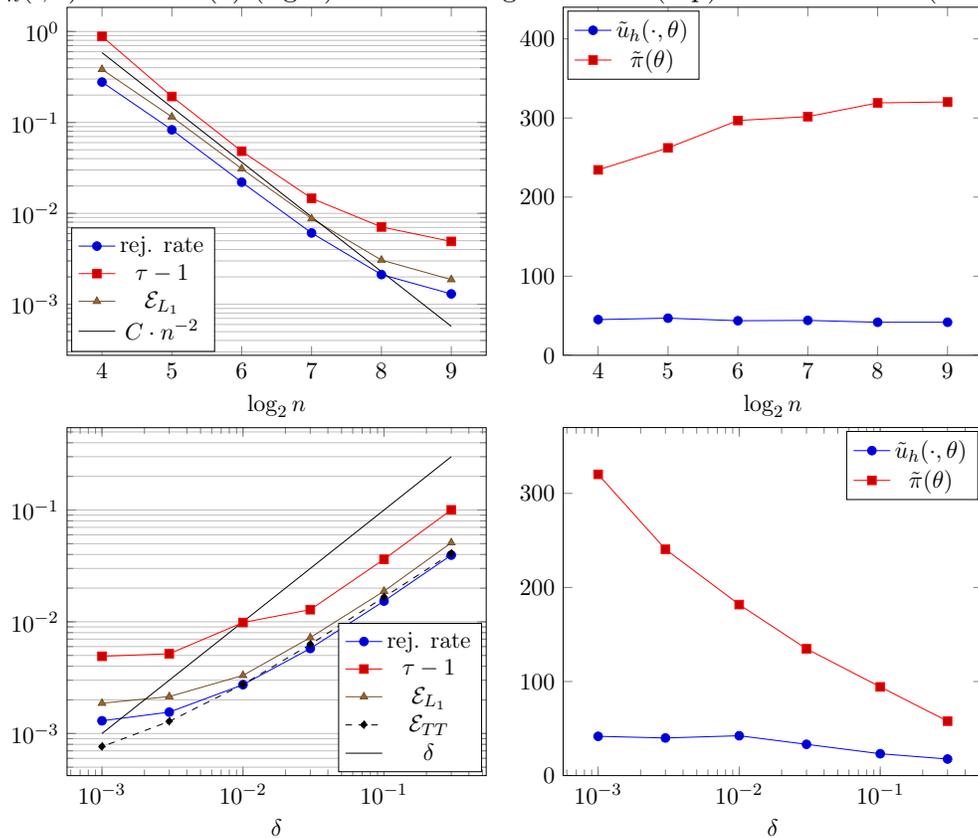
\begin{figure*}
\centering
\caption{Inverse diffusion problem: Rejection rate, IACT and errors (left), as well as maximal TT ranks for $\tilde u_h(\cdot,\theta)$ and for $\tilde\pi (\theta)$ (right) for different grid sizes $n$ (top) and values of $\delta$ (bottom).}
\label{fig:ff-eps}
\resizebox{0.4\linewidth}{!}{
\begin{tikzpicture}
  \begin{axis}[
      xmode=normal,
      ymode=log,
      xlabel={$\log_2{n}$},
      legend style={at={(0.01,0.01)},anchor=south west},
      x filter/.code={\pgfmathparse{log2(\pgfmathresult)}\pgfmathresult},
   ]
   \addplot+ coordinates{
                         (16  , 2.7798e-01)
                         (32  , 8.2954e-02)
                         (64  , 2.2023e-02)
                         (128 , 6.0844e-03)
                         (256 , 2.1219e-03)
                         (512 , 1.2999e-03)
                         }; \addlegendentry{rej. rate}; 
   \addplot+ coordinates{
                         (16  , 8.8435e-01)
                         (32  , 1.9279e-01)
                         (64  , 4.8222e-02)
                         (128 , 1.4633e-02)
                         (256 , 7.0839e-03)
                         (512 , 4.9131e-03)
                         }; \addlegendentry{$\tau-1$};  

  \addplot+[mark=triangle*] coordinates{
                          (16  , 3.8368e-01)
                          (32  , 1.1514e-01)
                          (64  , 3.0987e-02)
                          (128 , 8.7409e-03)
                          (256 , 3.0637e-03)
                          (512 , 1.8683e-03)
                          };  \addlegendentry{$\mathcal{E}_{L_1}$};


  \addplot+[no marks, domain=16:512, black] {(x^(-2))*1.5e2}; \addlegendentry{$C \cdot n^{-2}$};
  \end{axis}
\end{tikzpicture}
}
\resizebox{0.39\linewidth}{!}{
\begin{tikzpicture}
  \begin{axis}[
      xmode=normal,
      ymode=normal,
      xlabel={$\log_2{n}$},
      ymin=0,
      ymax=440,
      legend style={at={(0.01,0.99)},anchor=north west},
      x filter/.code={\pgfmathparse{log2(\pgfmathresult)}\pgfmathresult},
   ]
   \addplot+ coordinates{
                         (16  , 4.5125e+01)
                         (32  , 4.6938e+01)
                         (64  , 4.3688e+01)
                         (128 , 4.4125e+01)
                         (256 , 4.1750e+01)
                         (512 , 4.1750e+01)
                        };  \addlegendentry{$\tilde u_h(\cdot,\theta)$};

   \addplot+ coordinates{
                         (16  , 2.3438e+02)
                         (32  , 2.6219e+02)
                         (64  , 2.9681e+02)
                         (128 , 3.0169e+02)
                         (256 , 3.1906e+02)
                         (512 , 3.2025e+02)
                        };  \addlegendentry{$\tilde\pi (\theta)$};
  \end{axis}
\end{tikzpicture}
}\\
\resizebox{0.4\linewidth}{!}{
\begin{tikzpicture}
  \begin{axis}[
      xmode=log,
      ymode=log,
      xlabel={$\delta$},
      legend style={at={(0.99,0.01)},anchor=south east},
   ]
   \addplot+ coordinates{
                         (0.3   , 3.9500e-02)
                         (0.1   , 1.5313e-02)
                         (0.03  , 5.7669e-03)
                         (0.01  , 2.7342e-03)
                         (0.003 , 1.5545e-03)
                         (0.001 , 1.2999e-03)
                         }; \addlegendentry{rej. rate}; 
   \addplot+ coordinates{
                         (0.3   , 1.0038e-01)
                         (0.1   , 3.6247e-02)
                         (0.03  , 1.2833e-02)
                         (0.01  , 9.8507e-03)
                         (0.003 , 5.1776e-03)
                         (0.001 , 4.9131e-03)
                         }; \addlegendentry{$\tau-1$}; 

  \addplot+[mark=triangle*] coordinates{
                         (0.3   , 5.0994e-02)
                         (0.1   , 1.8794e-02)
                         (0.03  , 7.2179e-03)
                         (0.01  , 3.3122e-03)
                         (0.003 , 2.1415e-03)
                         (0.001 , 1.8683e-03)
                         };  \addlegendentry{$\mathcal{E}_{L_1}$};


   \addplot+[dashed,black,mark=diamond*,mark options={black}] coordinates{
                         (0.3   , 4.0759e-02)
                         (0.1   , 1.6607e-02)
                         (0.03  , 6.2794e-03)
                         (0.01  , 2.7477e-03)
                         (0.003 , 1.2835e-03)
                         (0.001 , 7.6360e-04)
                         }; \addlegendentry{$\mathcal{E}_{TT}$}; 

  \addplot+[solid,no marks, domain=1e-3:3e-1, black] {x}; \addlegendentry{$\delta$};
  \end{axis}
\end{tikzpicture}
}
\resizebox{0.39\linewidth}{!}{
\begin{tikzpicture}
  \begin{axis}[
      xmode=log,
      ymode=normal,
      xlabel={$\delta$},
      legend style={at={(0.99,0.99)},anchor=north east},
      ymin=0,
      ymax=370,
   ]
   \addplot+ coordinates{
                         (0.3   , 1.7625e+01)
                         (0.1   , 2.3250e+01)
                         (0.03  , 3.3250e+01)
                         (0.01  , 4.2375e+01)
                         (0.003 , 40)
                         (0.001 , 4.1750e+01)
                        };  \addlegendentry{$\tilde u_h(\cdot,\theta)$};

   \addplot+ coordinates{
                         (0.3   , 5.7875e+01)
                         (0.1   , 9.4312e+01)
                         (0.03  , 1.3481e+02)
                         (0.01  , 1.8181e+02)
                         (0.003 , 2.4062e+02)
                         (0.001 , 3.2025e+02)
                        };  \addlegendentry{$\tilde\pi (\theta)$};
  \end{axis}
\end{tikzpicture}
}
\end{figure*}

In Fig. \ref{fig:ff-eps} (right), we demonstrate the benefit of first computing a TT approximation $\tilde u_h(\cdot,\theta)$ of the FE solution $u_h(\cdot,\theta)$, as described in the previous subsection. We see that the TT ranks to approximate $u_h$ are significantly smaller than the TT ranks to approximate the density $\pi$ to the same accuracy. In both cases, the TT ranks are independent of $n$, for $n$ sufficiently large, and they show only a logarithmic dependence on the TT accuracy $\delta$. However, the growth is significantly faster for $\ttpi$ than for $\tilde u_h$. For the default parameters in Table \ref{tab:ff-par}, the ranks of $\tilde u_h(\cdot,\theta)$ and $\ttpi(\theta)$ are 26 and 82, respectively, and the numbers of function evaluations to build the TT surrogates are about $100$ and about $53000$, respectively. The advantage is that with the surrogate $\tilde u_h$ available those $53000$ evaluations of $\pi$, using $\tilde u_h$ instead of $u_h$ in the data misfit functional, are significantly cheaper and the PDE only has to be actually solved $100$ times.

\subsubsection{Convergence of the expected quantities of interest}

In this section we investigate the convergence of estimates of the expected value of the quantities of interest, and the computational complexity of the different methods. For the TT approximation of the density function $\pi$ we fix $n=32$ and $\delta=0.1$. For the TT approximation of $u_h$ we choose a TT tolerance of $10^{-4}$, which is equal to the discretization error for $h=2^{-6}$.

To compute the posterior expectations of the QoIs in \eqref{eq:post_inf} we compare two approaches that use our TT-CD sampling procedure:
\begin{description}
 \item[{[TT-MH]}] (Sec.~\ref{sec:ttmcmc}) Metropolis-Hastings with independence proposals sampled via the TT-CD sampling procedure from the approximate distribution $\ttpi$.
 \item[{[TT-qIW]}]  (Sec.~\ref{sec:isqmc}) Using the approximate density $\ttpi$ as an importance weight and estimating the expected value and the normalizing constant via a randomized QMC lattice rule.
\end{description}
Moreover, we test the two-level versions of both methods described in Section~\ref{sec:multi}.

To benchmark the TT approaches, we use again DRAM with the initial covariance chosen to be the identity and discard the first $N/4$ samples. However, as a second benchmark, we also compute the posterior expectation directly by applying QMC to the two terms in the ratio estimate (\textbf{QMC-rat}), as defined in \eqref{eq:post_inf} and analysed in \cite{scheichl-qmc-bayes-2017}. The QMC method in TT-qIW is again the randomized rank-1 lattice rule with product weights $\gamma_k = 1/k^2$ and generating vector from the file {\tt \verb+lattice-39102-1024-1048576.3600+} at {\tt \verb+http://web.maths.unsw.edu.au/~fkuo/+}.
In order to reduce random fluctuations in the results, we average $16$ runs of each approach in each experiment. The rejection rate and the IACT for TT-MH and DRAM are shown in Table \ref{tab:ff-chain}. Notice that the autocorrelation times of DRAM for the coordinates $\theta$ and for the quantity of interest $F$ differ significantly, since the latter coordinates have a weaker influence on $F$.

\begin{figure*}
\centering
\caption{Inverse diffusion problem: Relative errors for the average flux (left) and for the probability of the flux exceeding $1.5$ (right) for different numbers of samples $N$.}
\label{fig:ff-err-N}
\resizebox{0.39\linewidth}{!}{
\begin{tikzpicture}
  \begin{axis}[%
  xmode=normal,
  ymode=log,
  xlabel=$\log_2 N$,
  ylabel={relative error for $\mathbb{E}_{\pi}[F]$},
  legend style={at={(0.99,0.99)},anchor=north east},
  x filter/.code={\pgfmathparse{log2(\pgfmathresult)}\pgfmathresult},
  ]
  \addplot+[] coordinates{(2^9 , 9.5709e-03)
                          (2^10, 4.7039e-03)
                          (2^11, 3.6251e-03)
                          (2^12, 2.6660e-03)
                          (2^13, 1.6596e-03)
                          (2^14, 1.0000e-03)
                          (2^15, 6.8202e-04)
                          (2^16, 6.1045e-04)
                          (2^17, 3.9070e-04)
                          (2^18, 3.2234e-04)
                          }; 



  \addplot+[] coordinates{(2^7 ,    2.0883e-03)
                          (2^8 ,    1.4163e-03)
                          (2^9 ,    8.9528e-04)
                          (2^10,    5.7055e-04)
                          (2^11,    2.5678e-04)
                          (2^12,    2.1606e-04)
                          (2^13,    1.3923e-04)
                          (2^14,    1.3405e-04)
                          (2^15,    6.9282e-05)
                          (2^16,    5.0129e-05)
                          (2^17,    2.3499e-05)
                          (2^18,    1.3300e-05)
                         }; 

  \addplot+[purple,mark=triangle*,mark options={purple}] coordinates{(2^13-6000, 2.2878e-02)
                                (2^15-6000, 6.2463e-03)
                                (2^17-6000, 1.6843e-03)
                                (2^18-6000, 1.1349e-03)
                                (2^19-6000, 6.9849e-04)
                                }; 

  \addplot+[black] coordinates{
                          (2^11, 4.9471e-03)
                          (2^13, 1.2222e-03)
                          (2^15, 4.8390e-04)
                          (2^17, 1.5904e-04)
                          (2^19, 6.5513e-05)
                         }; 


  \addplot+[green!50!black,mark options={green!50!black}] coordinates{
                                (2^13-6000, 1.4777e-02) 
                                (2^15-6000, 7.0841e-03) 
                                (2^17-6000, 2.8177e-03) 
                                (2^18-6000, 1.7960e-03)
                                }; 

  \addplot+[no marks, domain=2^11:2^15,solid,black] {(x^(-0.5))*5.0}  node[pos=0.5,anchor=west] {$0.5$};
  \addplot+[no marks, domain=2^11:2^15,solid,black] {(x^(-1.0))*5.0*2^5.5} node[pos=0.5,anchor=east] {$1.0$};

  \addplot+[no marks,dashed,black, domain=2^6:2^20] {1.1991e-04} node[pos=0.0,anchor=north west] {discr. error};
  \end{axis}
 \end{tikzpicture}
}
\resizebox{0.39\linewidth}{!}{
\begin{tikzpicture}
  \begin{axis}[%
  xmode=normal,
  ymode=log,
  xlabel=$\log_2 N$,
  ylabel={relative error for $P_{F>1.5}$},
  legend style={at={(0.01,0.01)},anchor=south west},
  x filter/.code={\pgfmathparse{log2(\pgfmathresult)}\pgfmathresult},
  ]
  \addplot+[] coordinates{(2^9 , 1.3600e-01)
                          (2^10, 1.0773e-01)
                          (2^11, 6.8493e-02)
                          (2^12, 4.8459e-02)
                          (2^13, 2.2761e-02)
                          (2^14, 1.3097e-02)
                          (2^15, 1.1295e-02)
                          (2^16, 9.3384e-03)
                          (2^17, 7.7678e-03)
                          (2^18, 4.9315e-03)
                          };  \addlegendentry{TT-MH};  



  \addplot+[] coordinates{(2^7 ,   1.4660e-01)
                          (2^8 ,   1.4487e-01)
                          (2^9 ,   5.0458e-02)
                          (2^10,   2.5822e-02)
                          (2^11,   2.0454e-02)
                          (2^12,   8.6385e-03)
                          (2^13,   7.4490e-03)
                          (2^14,   4.0763e-03)
                          (2^15,   2.7994e-03)
                          (2^16,   1.7197e-03)
                          (2^17,   9.0654e-04)
                          (2^18,   7.2459e-04)
                         };  \addlegendentry{TT-qIW}; 

  \addplot+[purple,mark=triangle*,mark options={purple}] coordinates{(2^13-6000, 3.2192e-01)
                                (2^15-6000, 9.4396e-02)
                                (2^17-6000, 2.4265e-02)
                                (2^18-6000, 2.2791e-02)
                                (2^19-6000, 1.4292e-02)
                                };  \addlegendentry{DRAM};

  \addplot+[black] coordinates{
                          (2^11, 8.8555e-02)
                          (2^13, 4.3787e-02)
                          (2^15, 1.6524e-02)
                          (2^17, 8.4674e-03)
                          (2^19, 7.1032e-03)
                         };  \addlegendentry{QMC-rat};


  \addplot+[green!50!black,mark options={green!50!black}] coordinates{
                                (2^13-6000, 1.7270e-01) 
                                (2^15-6000, 8.9108e-02) 
                                (2^17-6000, 3.6180e-02) 
                                (2^18-6000, 2.6522e-02)
                                };  \addlegendentry{MALA};

  \addplot+[no marks, domain=2^11:2^15,black,solid] {(x^(-0.5))*40.0}  node[pos=0.5,anchor=west] {$0.5$};
  \addplot+[no marks, domain=2^11:2^15,black,solid] {(x^(-1.0))*40.0*2^5.5} node[pos=0.5,anchor=east] {$1.0$};

  \addplot+[no marks,dashed,black, domain=2^6:2^20] {1.5970e-03};
  \end{axis}
 \end{tikzpicture}
}
\end{figure*}
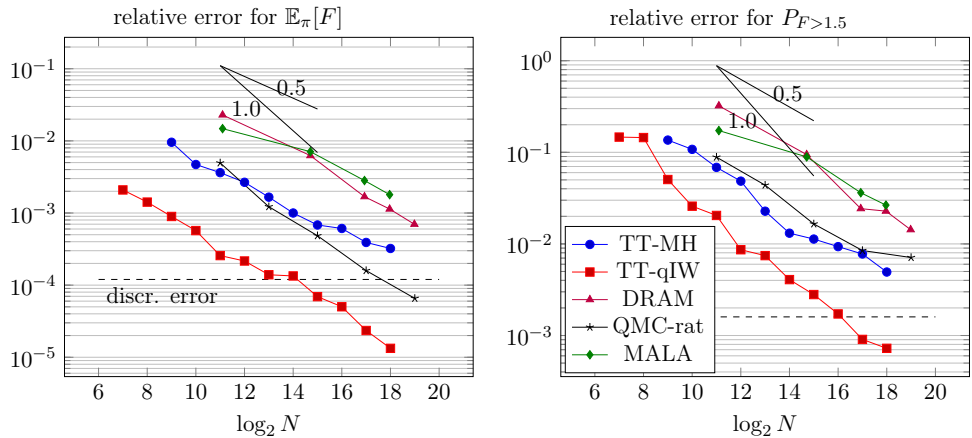

\begin{table}[t]
\centering
\caption{Inverse diffusion problem: rejection rates and IACTs for TT-MH and DRAM; $\tau_{\theta}$  and $\tau_{F}$ are the IACT for the parameter vector $\theta$ and for the QoI in \eqref{def_flux}, repectively.}
\label{tab:ff-chain}
 \begin{tabular}{c|ccc}
      & rejection rate      & $\tau_{\theta}$  & $\tau_{F}$  \\ \hline
TT-MH & 0.0853              & 1.1964           & 1.1903 \\  
DRAM  & 0.73                & 84.0             & 29.7 \\
 \end{tabular}
\end{table}

In Figure \ref{fig:ff-err-N}, we present the relative errors in the quantities of interest versus the chain length~$N$ together with reference slopes.
For the expected value $\mathbb{E}_{\pi}[F]$ of the flux in Fig.~\ref{fig:ff-err-N} (left),
the QMC ratio estimator (QMC-rat) converges with a rate close to linear in $1/N$, so that it becomes competitive with the TT approaches for higher accuracies.
However, by far the most effective approach is TT-qIW,
where the TT approximation $\ttpi$ is used as an importance weight in a QMC ratio estimator.
Asymptotically, the convergence rate for TT-qIW is also $\mathcal{O}(N^{-1})$ for $\mathbb{E}_{\pi}[F]$ and the effectivity of the estimator is almost two orders of magnitude better than that of DRAM.
All the other TT-based approaches and DRAM converge, as expected, with the standard MC order $N^{-1/2}$.
For the non-smooth indicator function employed in $P_{F>1.5}$ in Fig.~\ref{fig:ff-err-N} (right),
the relative performance of the different approaches is similar, although the QMC-rat estimator now also converges with the MC rate of order $\mathcal{O}(N^{-1/2})$.
Somewhat surprisingly, the TT-qIW method seems to converge slightly better than $\mathcal{O}(N^{-1/2})$ also for $P_{F>1.5}$ and outperforms all other approaches by an order of magnitude.

The results in Fig.~\ref{fig:ff-err-N} are all computed for the same spatial resolution of the forward model.
In a practical inverse problem, for the best efficiency, all errors (due to truncation, discretization and sampling) are typically equilibrated.
Thus, it is useful to estimate the spatial discretization error.
We achieve this by computing the posterior expectations of the QoIs on three discretization grids (with TT-qIW and $N=2^{18}$) and by using these to estimate the error via Runge's rule.
The estimated error for $h=2^{-6}$ is plotted as a horizontal dashed line in Fig.~\ref{fig:ff-err-N}.
We see that with the TT-qIW method $N=2^{13}$ samples are sufficient to obtain a sampling error of the order of the discretization error for $\mathbb{E}_{\pi}[F]$,
while all other approaches require at least $N=2^{17}$ samples (up to $N > 2^{21}$ for DRAM).

In addition to DRAM, we also consider a version of the Metropolis adjusted Langevin (MALA) algorithm with adapted empirical covariance matrix as a preconditioner \cite{atchade-adaMALA-2006}.
However, the latter components of the gradient are rather small and give little information about the geometry.
This makes the MALA convergence comparable to that of DRAM.
Moreover, the computation of the gradient of $u_h(\cdot,\theta)$ (feeding into $\nabla \log\pi(\theta)$) is more expensive than the computation of the posterior alone.

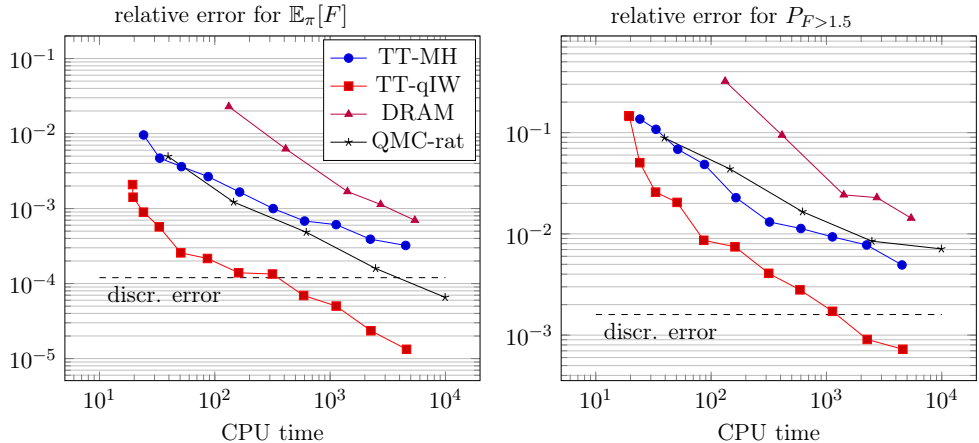
\begin{figure*}
\centering
\caption{Inverse diffusion problem: Relative errors in the mean flux (left) and in the exceedance probability (right) plotted against the total CPU times (sec.)}
\label{fig:ff-ttimes}
\resizebox{0.39\linewidth}{!}{%
\begin{tikzpicture}
  \begin{axis}[%
  xmode=log,
  ymode=log,
  ylabel={relative error for $\mathbb{E}_{\pi}[F]$},
  xlabel={CPU time},
  ymax=2e-1,
  legend style={at={(0.99,0.99)},anchor=north east},
  ]
  \addplot+[] coordinates{(1.1174e+01  + 3.6516e+00 +  1.1705e-01 +  9.1806e+00, 9.5709e-03)
                          (1.1195e+01  + 3.6814e+00 +  1.3391e-01 +  1.8255e+01, 4.7039e-03)
                          (1.1180e+01  + 3.6736e+00 +  1.7398e-01 +  3.6343e+01, 3.6251e-03)
                          (1.1146e+01  + 3.7007e+00 +  2.7548e-01 +  7.2463e+01, 2.6660e-03)
                          (1.1180e+01  + 3.6648e+00 +  4.8354e-01 +  1.4895e+02, 1.6596e-03)
                          (1.1184e+01  + 3.6509e+00 +  8.3720e-01 +  3.0519e+02, 1.0000e-03)
                          (1.1176e+01  + 3.6710e+00 +  1.4952e+00 +  5.8388e+02, 6.8202e-04)
                          (1.1232e+01  + 3.6767e+00 +  2.7833e+00 +  1.1095e+03, 6.1045e-04)
                          (1.1159e+01  + 3.6446e+00 +  5.4774e+00 +  2.2124e+03, 3.9070e-04)
                          (1.1201e+01  + 3.6698e+00 +  1.0954e+01 +  4.4924e+03, 3.2234e-04)
                          }; \addlegendentry{TT-MH};  

  \addplot+[] coordinates{(1.3108e+01 +  3.7328e+00 +  1.0599e-01 +  2.4164e+00,    2.0883e-03)
                          (1.1149e+01 +  3.6634e+00 +  1.1022e-01 +  4.6348e+00,    1.4163e-03)
                          (1.1157e+01 +  3.7465e+00 +  1.1824e-01 +  9.0401e+00,    8.9528e-04)
                          (1.1164e+01 +  3.6610e+00 +  1.3293e-01 +  1.8042e+01,    5.7055e-04)
                          (1.1164e+01 +  3.6619e+00 +  1.7270e-01 +  3.5702e+01,    2.5678e-04)
                          (1.1101e+01 +  3.6627e+00 +  2.7383e-01 +  7.1311e+01,    2.1606e-04)
                          (1.1120e+01 +  3.6698e+00 +  4.8847e-01 +  1.4580e+02,    1.3923e-04)
                          (1.1140e+01 +  3.6747e+00 +  8.3033e-01 +  3.0016e+02,    1.3405e-04)
                          (1.1124e+01 +  3.6549e+00 +  1.4903e+00 +  5.7317e+02,    6.9282e-05)
                          (1.1156e+01 +  3.6635e+00 +  2.7722e+00 +  1.1105e+03,    5.0129e-05)
                          (1.1193e+01 +  3.6737e+00 +  5.4103e+00 +  2.2374e+03,    2.3499e-05)
                          (1.1142e+01 +  3.6673e+00 +  1.0694e+01 +  4.5731e+03,    1.3300e-05)
                         }; \addlegendentry{TT-qIW}; 

  \addplot+[purple,mark=triangle*,mark options={purple}] coordinates{(1.3234e+02, 2.2878e-02)
                                (4.1180e+02, 6.2463e-03)
                                (1.4132e+03, 1.6843e-03)
                                (2.7390e+03, 1.1349e-03)
                                (5.4260e+03, 6.9849e-04)
                                }; \addlegendentry{DRAM};


  \addplot+[black] coordinates{
                          (3.9353e+01, 4.9471e-03)
                          (1.4544e+02, 1.2222e-03)
                          (6.2091e+02, 4.8390e-04)
                          (2.4775e+03, 1.5904e-04)
                          (9.9293e+03, 6.5513e-05)
                         }; \addlegendentry{QMC-rat};

    \addplot+[no marks,dashed,black, domain=1e1:1e4] {1.1991e-04} node[pos=0.0,anchor=north west] {discr. error};
  \end{axis}
\end{tikzpicture}
}
\resizebox{0.39\linewidth}{!}{%
\begin{tikzpicture}
  \begin{axis}[%
  xmode=log,
  ymode=log,
  ylabel={relative error for $P_{F>1.5}$},
  xlabel={CPU time},
  ymax=9e-1,
  legend style={at={(0.99,0.99)},anchor=north east},
  ]
  \addplot+[] coordinates{(1.1174e+01 +  3.6516e+00 +  1.1705e-01  + 9.1806e+00,  1.3600e-01)
                          (1.1195e+01 +  3.6814e+00 +  1.3391e-01  + 1.8255e+01,  1.0773e-01)
                          (1.1180e+01 +  3.6736e+00 +  1.7398e-01  + 3.6343e+01,  6.8493e-02)
                          (1.1146e+01 +  3.7007e+00 +  2.7548e-01  + 7.2463e+01,  4.8459e-02)
                          (1.1180e+01 +  3.6648e+00 +  4.8354e-01  + 1.4895e+02,  2.2761e-02)
                          (1.1184e+01 +  3.6509e+00 +  8.3720e-01  + 3.0519e+02,  1.3097e-02)
                          (1.1176e+01 +  3.6710e+00 +  1.4952e+00  + 5.8388e+02,  1.1295e-02)
                          (1.1232e+01 +  3.6767e+00 +  2.7833e+00  + 1.1095e+03,  9.3384e-03)
                          (1.1159e+01 +  3.6446e+00 +  5.4774e+00  + 2.2124e+03,  7.7678e-03)
                          (1.1201e+01 +  3.6698e+00 +  1.0954e+01  + 4.4924e+03,  4.9315e-03)
                          }; 

  \addplot+[] coordinates{(1.3108e+01 +  3.7328e+00 +  1.0599e-01 +  2.4164e+00,  1.4660e-01)
                          (1.1149e+01 +  3.6634e+00 +  1.1022e-01 +  4.6348e+00,  1.4487e-01)
                          (1.1157e+01 +  3.7465e+00 +  1.1824e-01 +  9.0401e+00,  5.0458e-02)
                          (1.1164e+01 +  3.6610e+00 +  1.3293e-01 +  1.8042e+01,  2.5822e-02)
                          (1.1164e+01 +  3.6619e+00 +  1.7270e-01 +  3.5702e+01,  2.0454e-02)
                          (1.1101e+01 +  3.6627e+00 +  2.7383e-01 +  7.1311e+01,  8.6385e-03)
                          (1.1120e+01 +  3.6698e+00 +  4.8847e-01 +  1.4580e+02,  7.4490e-03)
                          (1.1140e+01 +  3.6747e+00 +  8.3033e-01 +  3.0016e+02,  4.0763e-03)
                          (1.1124e+01 +  3.6549e+00 +  1.4903e+00 +  5.7317e+02,  2.7994e-03)
                          (1.1156e+01 +  3.6635e+00 +  2.7722e+00 +  1.1105e+03,  1.7197e-03)
                          (1.1193e+01 +  3.6737e+00 +  5.4103e+00 +  2.2374e+03,  9.0654e-04)
                          (1.1142e+01 +  3.6673e+00 +  1.0694e+01 +  4.5731e+03,  7.2459e-04)
                         }; 

  \addplot+[purple,mark=triangle*,mark options={purple}] coordinates{(1.3234e+02, 3.2192e-01)
                                (4.1180e+02, 9.4396e-02)
                                (1.4132e+03, 2.4265e-02)
                                (2.7390e+03, 2.2791e-02)
                                (5.4260e+03, 1.4292e-02)
                                }; 


  \addplot+[black] coordinates{(3.9353e+01, 8.8555e-02)
                               (1.4544e+02, 4.3787e-02)
                               (6.2091e+02, 1.6524e-02)
                               (2.4775e+03, 8.4674e-03)
                               (9.9293e+03, 7.1032e-03)
                         }; 

  \addplot+[no marks,dashed,black, domain=1e1:1e4] {1.5970e-03}  node[pos=0.0,anchor=north west] {discr. error};
  \end{axis}
\end{tikzpicture}
}
\end{figure*}

In Fig.~\ref{fig:ff-ttimes} we compare the approaches in terms of total CPU time. The horizontal off-set for all the TT based methods is the time needed to build the TT approximation $\tilde\pi$. The error then initially drops rapidly. As soon as the number $N$ of samples is big enough, the set-up cost becomes negligible and the relative performance of all the approaches is very similar to that in Fig.~\ref{fig:ff-err-N}, since the computational time per sample is dominated by the PDE solve and all approaches that we are comparing evaluate $\pi$ for each sample.
It is possible to significantly reduce this sampling cost, if we do not evaluate the exact $\pi$ for each sample, e.g. by simply computing the expected value of the QoIs with respect to the approximate density $\ttpi$ using TT-CD and QMC quadrature. However, in that case the estimator will be biased and the amount of bias depends on the accuracy of the TT surrogate $\ttpi$. In that case, the total cost is dominated by the set-up cost (a more detailed study of the cost of the various stages of our TT approach is included in Fig. \ref{fig:ff-d} below.)


%
%
%

In Fig.~\ref{fig:ff-Nneps}, we include a more detailed study of the influence of the TT parameters $n$ and $\delta$. As expected, a more accurate TT surrogate provides a better proposal/importance weight  and thus leads to a better performance, but it also leads to a higher set-up cost. So for lower accuracies, cruder approximations are better. However, the quality of the surrogate seems to be less important for Monte Carlo based approaches.
For the middle plot in Fig.~\ref{fig:ff-Nneps}, we used the importance weighting method described in Sec.~\ref{sec:isqmc} with random Monte Carlo samples (TT-rIW).
It converges with almost the same rate as TT-MH, which might be due to independence proposals.
The quality of the surrogate seems to be significantly more important for the QMC-based approaches, such as for TT-qIW (Fig.~\ref{fig:ff-Nneps}, right), since the mapped QMC samples carry the PDF approximation error.

Another thing we study in Fig.~\ref{fig:ff-Nneps} are the two-level versions of TT-MH and of importance weighting described in Section~\ref{sec:multi}.
While the variance reduction and the induced cost reduction are significant compared to the single-level quadrature in the case of i.i.d. seed points in Alg.~\ref{alg:samp} (both in TT-MH and TT-rIW),
the difference in the case of QMC seeds in TT-qIW is marginal.
This is because the rate of convergence of the QMC quadrature drops to $\mathcal{O}(N^{-1/2})$ when applied to the less smooth difference term in \eqref{eq:qcorr2}.
In contrast, the single-level QMC estimator \eqref{eq:is} converges with a noticeably higher rate.

\begin{figure*}
\centering
\caption{Inverse diffusion problem: Convergence of $\mathbb{E}_{\pi}[F]$ (solid lines) and $P_{F>1.5}$ (dashed lines) with TT-MH (left), TT-rIW (middle) and TT-qIW (right) for different choices of $n$ and $\delta$.}
\label{fig:ff-Nneps}

\resizebox{0.32\linewidth}{!}{%
\begin{tikzpicture}
\begin{axis}[%
  xmode=log,
  ymode=log,
  xlabel={CPU time (TT-MH)},
  xmin=9e0,xmax=2e4,
  ymin=3e-6,ymax=4e-1,
  legend style={at={(0.01,0.01)},anchor=south west},
]
\addplot+[] coordinates{
                        (   1.1092e+01 +  1.8923e+00 +  9.9443e-02 +  2.5576e+00,    1.5580e-02)
                        (   1.1078e+01 +  1.8918e+00 +  1.0097e-01 +  4.9537e+00,    1.4338e-02)
                        (   1.1108e+01 +  1.8974e+00 +  1.0406e-01 +  9.7414e+00,    1.2496e-02)
                        (   1.1118e+01 +  1.9014e+00 +  1.1395e-01 +  1.9557e+01,    6.4084e-03)
                        (   1.1149e+01 +  1.8983e+00 +  1.2924e-01 +  3.8957e+01,    5.7848e-03)
                        (   1.1065e+01 +  1.8849e+00 +  1.7099e-01 +  7.8205e+01,    3.9331e-03)
                        (   1.1092e+01 +  1.8880e+00 +  2.5307e-01 +  1.5217e+02,    1.8969e-03)
                        (   1.1087e+01 +  1.9061e+00 +  4.2599e-01 +  3.0737e+02,    1.2500e-03)
                        (   1.1078e+01 +  1.8979e+00 +  7.3324e-01 +  5.8370e+02,    1.0328e-03)
                        (   1.1048e+01 +  1.8970e+00 +  1.3848e+00 +  1.0994e+03,    8.5746e-04)
                        (   1.1081e+01 +  1.8979e+00 +  2.7622e+00 +  2.2033e+03,    5.1325e-04)
                        (   1.1112e+01 +  1.8924e+00 +  5.4399e+00 +  4.4212e+03,    3.7862e-04)
                       }; 
\addplot+[] coordinates{
                        (   1.1178e+01 +  3.6354e+00 +  1.0425e-01 +  2.4239e+00,   9.6214e-03)
                        (   1.1210e+01 +  3.6465e+00 +  1.0989e-01 +  4.6808e+00,   9.1990e-03)
                        (   1.1174e+01 +  3.6516e+00 +  1.1705e-01 +  9.1806e+00,   9.5709e-03)
                        (   1.1195e+01 +  3.6814e+00 +  1.3391e-01 +  1.8255e+01,   4.7039e-03)
                        (   1.1180e+01 +  3.6736e+00 +  1.7398e-01 +  3.6343e+01,   3.6251e-03)
                        (   1.1146e+01 +  3.7007e+00 +  2.7548e-01 +  7.2463e+01,   2.6660e-03)
                        (   1.1180e+01 +  3.6648e+00 +  4.8354e-01 +  1.4895e+02,   1.6596e-03)
                        (   1.1184e+01 +  3.6509e+00 +  8.3720e-01 +  3.0519e+02,   1.0000e-03)
                        (   1.1176e+01 +  3.6710e+00 +  1.4952e+00 +  5.8388e+02,   6.8202e-04)
                        (   1.1232e+01 +  3.6767e+00 +  2.7833e+00 +  1.1095e+03,   6.1045e-04)
                        (   1.1159e+01 +  3.6446e+00 +  5.4774e+00 +  2.2124e+03,   3.9070e-04)
                        (   1.1201e+01 +  3.6698e+00 +  1.0954e+01 +  4.4924e+03,   3.2234e-04)
                       }; 
\addplot+[mark=triangle*] coordinates{
                        (   1.2916e+01 +  1.6929e+01 +  1.7566e-01 +  2.4652e+00,   8.9385e-03)
                        (   1.1598e+01 +  1.6901e+01 +  1.8899e-01 +  4.7830e+00,   9.4374e-03)
                        (   1.1595e+01 +  1.6911e+01 +  2.2930e-01 +  9.2658e+00,   8.7064e-03)
                        (   1.1639e+01 +  1.6905e+01 +  3.0724e-01 +  1.8319e+01,   4.2334e-03)
                        (   1.1645e+01 +  1.6938e+01 +  4.8650e-01 +  3.6230e+01,   3.3129e-03)
                        (   1.1695e+01 +  1.6966e+01 +  8.8911e-01 +  7.2155e+01,   2.1345e-03)
                        (   1.1616e+01 +  1.6961e+01 +  1.6256e+00 +  1.3985e+02,   1.5281e-03)
                        (   1.1608e+01 +  1.7021e+01 +  3.5411e+00 +  2.7973e+02,   7.7447e-04)
                        (   1.1630e+01 +  1.7012e+01 +  6.9107e+00 +  5.5938e+02,   7.2758e-04)
                        (   1.1623e+01 +  1.6934e+01 +  1.4183e+01 +  1.1236e+03,   6.2144e-04)
                        (   1.1599e+01 +  1.6978e+01 +  2.8465e+01 +  2.2410e+03,   4.3162e-04)
                        (   1.1595e+01 +  1.7024e+01 +  5.6983e+01 +  4.5262e+03,   3.1168e-04)
                       }; 

\addplot+[mark=diamond*,green!50!black,mark options={green!50!black}] coordinates{
      (1.0918e+01 + 1.0080e+01 + 2.9669e-01 + 4.8327e+00, 2.2796e-03)
      (1.0906e+01 + 1.0081e+01 + 3.4031e-01 + 9.2710e+00, 2.1662e-03)
      (1.0956e+01 + 1.0029e+01 + 4.1132e-01 + 1.8148e+01, 1.5641e-03)
      (1.0921e+01 + 1.0144e+01 + 5.8944e-01 + 3.6238e+01, 8.6936e-04)
      (1.0920e+01 + 1.0094e+01 + 1.0583e+00 + 8.5692e+01, 6.5700e-04)
      (1.0931e+01 + 1.0094e+01 + 2.4480e+00 + 2.5666e+02, 3.3273e-04)
      (1.3078e+01 + 1.0160e+01 + 9.3746e+00 + 9.2151e+02, 1.6841e-04)
      (1.0929e+01 + 1.0103e+01 + 3.8110e+01 + 3.3849e+03, 6.8484e-05)
                       }; 

\pgfplotsset{cycle list shift=-4};

\addplot+[dashed] coordinates{
                        (   1.1092e+01 +  1.8923e+00 +  9.9443e-02 +  2.5576e+00,  2.2500e-01)
                        (   1.1078e+01 +  1.8918e+00 +  1.0097e-01 +  4.9537e+00,  2.1143e-01)
                        (   1.1108e+01 +  1.8974e+00 +  1.0406e-01 +  9.7414e+00,  2.0918e-01)
                        (   1.1118e+01 +  1.9014e+00 +  1.1395e-01 +  1.9557e+01,  1.1449e-01)
                        (   1.1149e+01 +  1.8983e+00 +  1.2924e-01 +  3.8957e+01,  1.1054e-01)
                        (   1.1065e+01 +  1.8849e+00 +  1.7099e-01 +  7.8205e+01,  7.4185e-02)
                        (   1.1092e+01 +  1.8880e+00 +  2.5307e-01 +  1.5217e+02,  3.1663e-02)
                        (   1.1087e+01 +  1.9061e+00 +  4.2599e-01 +  3.0737e+02,  2.1128e-02)
                        (   1.1078e+01 +  1.8979e+00 +  7.3324e-01 +  5.8370e+02,  1.4062e-02)
                        (   1.1048e+01 +  1.8970e+00 +  1.3848e+00 +  1.0994e+03,  1.0380e-02)
                        (   1.1081e+01 +  1.8979e+00 +  2.7622e+00 +  2.2033e+03,  8.6749e-03)
                        (   1.1112e+01 +  1.8924e+00 +  5.4399e+00 +  4.4212e+03,  8.3748e-03)
                       };
\addplot+[dashed] coordinates{
                        (   1.1178e+01 +  3.6354e+00 +  1.0425e-01 +  2.4239e+00,  1.6872e-01)
                        (   1.1210e+01 +  3.6465e+00 +  1.0989e-01 +  4.6808e+00,  1.7958e-01)
                        (   1.1174e+01 +  3.6516e+00 +  1.1705e-01 +  9.1806e+00,  1.3600e-01)
                        (   1.1195e+01 +  3.6814e+00 +  1.3391e-01 +  1.8255e+01,  1.0773e-01)
                        (   1.1180e+01 +  3.6736e+00 +  1.7398e-01 +  3.6343e+01,  6.8493e-02)
                        (   1.1146e+01 +  3.7007e+00 +  2.7548e-01 +  7.2463e+01,  4.8459e-02)
                        (   1.1180e+01 +  3.6648e+00 +  4.8354e-01 +  1.4895e+02,  2.2761e-02)
                        (   1.1184e+01 +  3.6509e+00 +  8.3720e-01 +  3.0519e+02,  1.3097e-02)
                        (   1.1176e+01 +  3.6710e+00 +  1.4952e+00 +  5.8388e+02,  1.1295e-02)
                        (   1.1232e+01 +  3.6767e+00 +  2.7833e+00 +  1.1095e+03,  9.3384e-03)
                        (   1.1159e+01 +  3.6446e+00 +  5.4774e+00 +  2.2124e+03,  7.7678e-03)
                        (   1.1201e+01 +  3.6698e+00 +  1.0954e+01 +  4.4924e+03,  4.9315e-03)
                       }; 
\addplot+[dashed,mark=triangle*] coordinates{
                        (   1.2916e+01 +  1.6929e+01 +  1.7566e-01 +  2.4652e+00,  1.6923e-01)
                        (   1.1598e+01 +  1.6901e+01 +  1.8899e-01 +  4.7830e+00,  1.5027e-01)
                        (   1.1595e+01 +  1.6911e+01 +  2.2930e-01 +  9.2658e+00,  1.3445e-01)
                        (   1.1639e+01 +  1.6905e+01 +  3.0724e-01 +  1.8319e+01,  1.0111e-01)
                        (   1.1645e+01 +  1.6938e+01 +  4.8650e-01 +  3.6230e+01,  6.2622e-02)
                        (   1.1695e+01 +  1.6966e+01 +  8.8911e-01 +  7.2155e+01,  4.0173e-02)
                        (   1.1616e+01 +  1.6961e+01 +  1.6256e+00 +  1.3985e+02,  2.0303e-02)
                        (   1.1608e+01 +  1.7021e+01 +  3.5411e+00 +  2.7973e+02,  1.1174e-02)
                        (   1.1630e+01 +  1.7012e+01 +  6.9107e+00 +  5.5938e+02,  1.1110e-02)
                        (   1.1623e+01 +  1.6934e+01 +  1.4183e+01 +  1.1236e+03,  9.0165e-03)
                        (   1.1599e+01 +  1.6978e+01 +  2.8465e+01 +  2.2410e+03,  7.9344e-03)
                        (   1.1595e+01 +  1.7024e+01 +  5.6983e+01 +  4.5262e+03,  3.8849e-03)
                       }; 

\addplot+[mark=diamond*,green!50!black,dashed,mark options={green!50!black}] coordinates{
      (1.0918e+01 + 1.0080e+01 + 2.9669e-01 + 4.8327e+00, 1.7919e-01)
      (1.0906e+01 + 1.0081e+01 + 3.4031e-01 + 9.2710e+00, 1.5418e-01)
      (1.0956e+01 + 1.0029e+01 + 4.1132e-01 + 1.8148e+01, 6.1111e-02)
      (1.0921e+01 + 1.0144e+01 + 5.8944e-01 + 3.6238e+01, 3.2115e-02)
      (1.0920e+01 + 1.0094e+01 + 1.0583e+00 + 8.5692e+01, 2.0153e-02)
      (1.0931e+01 + 1.0094e+01 + 2.4480e+00 + 2.5666e+02, 1.2688e-02)
      (1.3078e+01 + 1.0160e+01 + 9.3746e+00 + 9.2151e+02, 5.2103e-03)
      (1.0929e+01 + 1.0103e+01 + 3.8110e+01 + 3.3849e+03, 3.5913e-03)
                       }; 

\end{axis}
\end{tikzpicture}
}
\resizebox{0.32\linewidth}{!}{%
\begin{tikzpicture}
\begin{axis}[%
  xmode=log,
  ymode=log,
  xlabel={CPU time (TT-rIW)},
  xmin=9e0,xmax=2e4,
  ymin=3e-6,ymax=4e-1,
  legend style={at={(0.99,0.99)},anchor=north east},
]

\addplot+[] coordinates{
   (1.1596e+01 +  1.3040e+00 +  1.0143e-01 +  2.5900e+00,   1.4433e-02)
   (1.0978e+01 +  1.3165e+00 +  1.0567e-01 +  4.9990e+00,   9.0124e-03)
   (1.0944e+01 +  1.3075e+00 +  1.0739e-01 +  9.7158e+00,   6.8487e-03)
   (1.0940e+01 +  1.3102e+00 +  1.1679e-01 +  1.9374e+01,   5.5550e-03)
   (1.0967e+01 +  1.3130e+00 +  1.3846e-01 +  3.9188e+01,   3.5438e-03)
   (1.0984e+01 +  1.3053e+00 +  1.7499e-01 +  7.7761e+01,   2.0332e-03)
   (1.0970e+01 +  1.3051e+00 +  2.5581e-01 +  1.4995e+02,   2.5539e-03)
   (1.0968e+01 +  1.2989e+00 +  4.2650e-01 +  2.8336e+02,   1.0972e-03)
   (1.0981e+01 +  1.3004e+00 +  7.3473e-01 +  5.5304e+02,   6.6201e-04)
   (1.0977e+01 +  1.3132e+00 +  1.4180e+00 +  1.1022e+03,   5.8126e-04)
   (1.0963e+01 +  1.3039e+00 +  2.7224e+00 +  2.2175e+03,   4.0715e-04)
   (1.0961e+01 +  1.3040e+00 +  5.4674e+00 +  4.3890e+03,   3.0554e-04)
                       }; 
\addplot+[] coordinates{
   (1.0996e+01 +  2.1736e+00 +  1.0730e-01 +  2.3506e+00,   1.3177e-02)
   (1.0866e+01 +  2.2130e+00 +  1.1102e-01 +  4.5553e+00,   8.9679e-03)
   (1.0955e+01 +  2.2005e+00 +  1.1964e-01 +  8.8989e+00,   6.4709e-03)
   (1.0960e+01 +  2.1765e+00 +  1.3463e-01 +  1.7711e+01,   4.4246e-03)
   (1.0986e+01 +  2.1812e+00 +  1.7536e-01 +  3.5426e+01,   3.2054e-03)
   (1.0979e+01 +  2.1747e+00 +  2.7064e-01 +  6.9565e+01,   2.2771e-03)
   (1.0948e+01 +  2.1681e+00 +  4.8264e-01 +  1.4201e+02,   2.1780e-03)
   (1.0995e+01 +  2.1676e+00 +  8.2630e-01 +  2.8594e+02,   9.8302e-04)
   (1.0958e+01 +  2.1719e+00 +  1.5136e+00 +  5.6467e+02,   7.6102e-04)
   (1.0936e+01 +  2.1678e+00 +  2.8306e+00 +  1.1162e+03,   5.2725e-04)
   (1.0927e+01 +  2.1749e+00 +  5.5182e+00 +  2.2249e+03,   4.3630e-04)
   (1.0955e+01 +  2.1665e+00 +  1.0978e+01 +  4.4527e+03,   2.7877e-04)
                       }; 
\addplot+[mark=triangle*] coordinates{
   (1.2039e+01 +  8.3904e+00 +  1.9691e-01 +  2.4520e+00,   1.3529e-02)
   (1.0922e+01 +  8.3567e+00 +  2.1610e-01 +  4.6289e+00,   8.8705e-03)
   (1.0951e+01 +  8.3983e+00 +  2.5193e-01 +  8.9818e+00,   6.5883e-03)
   (1.0957e+01 +  8.3642e+00 +  3.3455e-01 +  1.7924e+01,   4.4698e-03)
   (1.0956e+01 +  8.3615e+00 +  5.2822e-01 +  3.5484e+01,   3.1242e-03)
   (1.0962e+01 +  8.3928e+00 +  9.3271e-01 +  7.0100e+01,   2.2988e-03)
   (1.0946e+01 +  8.4007e+00 +  1.7221e+00 +  1.4023e+02,   2.0824e-03)
   (1.0924e+01 +  8.3698e+00 +  3.5282e+00 +  2.8015e+02,   9.9872e-04)
   (1.1005e+01 +  8.4021e+00 +  6.9631e+00 +  5.5927e+02,   7.5938e-04)
   (1.0920e+01 +  8.4185e+00 +  1.4502e+01 +  1.1242e+03,   4.8266e-04)
   (1.0937e+01 +  8.3951e+00 +  2.8854e+01 +  2.2436e+03,   4.6413e-04)
   (1.0908e+01 +  8.4038e+00 +  5.7335e+01 +  4.4778e+03,   2.8265e-04)
                       }; 

\addplot+[mark=diamond*,green!50!black,mark options={green!50!black}] coordinates{
(1.6449e+01 + 9.0068e+00 + 1.3267e+00 + 2.3397e+00, 7.7298e-04)
(1.6105e+01 + 9.0494e+00 + 2.5245e+00 + 4.5321e+00, 4.1851e-04)
(1.5845e+01 + 9.0014e+00 + 4.7859e+00 + 8.9063e+00, 2.6741e-04)
(1.5798e+01 + 9.0446e+00 + 6.8114e+00 + 1.8130e+01, 2.3847e-04)
(1.5847e+01 + 9.0457e+00 + 1.5255e+01 + 3.5327e+01, 1.1258e-04)
(1.5868e+01 + 9.0757e+00 + 2.8207e+01 + 7.0066e+01, 9.9860e-05)
(1.5966e+01 + 9.0142e+00 + 5.5131e+01 + 1.4159e+02, 7.9177e-05)
(1.8470e+01 + 9.0230e+00 + 1.0796e+02 + 2.8168e+02, 4.6215e-05)
(1.5812e+01 + 9.0422e+00 + 1.9766e+02 + 5.7011e+02, 2.9795e-05)
(1.5903e+01 + 9.0411e+00 + 4.9352e+02 + 1.1905e+03, 3.5819e-05)
(1.5865e+01 + 9.0144e+00 + 9.3106e+02 + 2.2754e+03, 2.5596e-05)
(1.5851e+01 + 9.0423e+00 + 1.8938e+03 + 4.5267e+03, 1.3862e-05)
                       }; 

\pgfplotsset{cycle list shift=-4};

\addplot+[dashed] coordinates{
   (1.1596e+01 +  1.3040e+00 +  1.0143e-01 +  2.5900e+00,   2.2723e-01)
   (1.0978e+01 +  1.3165e+00 +  1.0567e-01 +  4.9990e+00,   8.1047e-02)
   (1.0944e+01 +  1.3075e+00 +  1.0739e-01 +  9.7158e+00,   8.8806e-02)
   (1.0940e+01 +  1.3102e+00 +  1.1679e-01 +  1.9374e+01,   8.0180e-02)
   (1.0967e+01 +  1.3130e+00 +  1.3846e-01 +  3.9188e+01,   7.5268e-02)
   (1.0984e+01 +  1.3053e+00 +  1.7499e-01 +  7.7761e+01,   3.1495e-02)
   (1.0970e+01 +  1.3051e+00 +  2.5581e-01 +  1.4995e+02,   3.2431e-02)
   (1.0968e+01 +  1.2989e+00 +  4.2650e-01 +  2.8336e+02,   2.0301e-02)
   (1.0981e+01 +  1.3004e+00 +  7.3473e-01 +  5.5304e+02,   1.6752e-02)
   (1.0977e+01 +  1.3132e+00 +  1.4180e+00 +  1.1022e+03,   7.8938e-03)
   (1.0963e+01 +  1.3039e+00 +  2.7224e+00 +  2.2175e+03,   8.4323e-03)
   (1.0961e+01 +  1.3040e+00 +  5.4674e+00 +  4.3890e+03,   5.2723e-03)
                       }; 
\addplot+[dashed] coordinates{
   (1.0996e+01 +  2.1736e+00 +  1.0730e-01 +  2.3506e+00,   2.0028e-01)
   (1.0866e+01 +  2.2130e+00 +  1.1102e-01 +  4.5553e+00,   9.6390e-02)
   (1.0955e+01 +  2.2005e+00 +  1.1964e-01 +  8.8989e+00,   8.4137e-02)
   (1.0960e+01 +  2.1765e+00 +  1.3463e-01 +  1.7711e+01,   7.3274e-02)
   (1.0986e+01 +  2.1812e+00 +  1.7536e-01 +  3.5426e+01,   6.2165e-02)
   (1.0979e+01 +  2.1747e+00 +  2.7064e-01 +  6.9565e+01,   3.4422e-02)
   (1.0948e+01 +  2.1681e+00 +  4.8264e-01 +  1.4201e+02,   3.4087e-02)
   (1.0995e+01 +  2.1676e+00 +  8.2630e-01 +  2.8594e+02,   2.1253e-02)
   (1.0958e+01 +  2.1719e+00 +  1.5136e+00 +  5.6467e+02,   1.4972e-02)
   (1.0936e+01 +  2.1678e+00 +  2.8306e+00 +  1.1162e+03,   6.9168e-03)
   (1.0927e+01 +  2.1749e+00 +  5.5182e+00 +  2.2249e+03,   9.3306e-03)
   (1.0955e+01 +  2.1665e+00 +  1.0978e+01 +  4.4527e+03,   3.9741e-03)
                       }; 
\addplot+[dashed,mark=triangle*] coordinates{
   (1.2039e+01 +  8.3904e+00 +  1.9691e-01 +  2.4520e+00,   2.1621e-01 )
   (1.0922e+01 +  8.3567e+00 +  2.1610e-01 +  4.6289e+00,   9.1258e-02 )
   (1.0951e+01 +  8.3983e+00 +  2.5193e-01 +  8.9818e+00,   8.5684e-02 )
   (1.0957e+01 +  8.3642e+00 +  3.3455e-01 +  1.7924e+01,   7.3281e-02 )
   (1.0956e+01 +  8.3615e+00 +  5.2822e-01 +  3.5484e+01,   6.4461e-02 )
   (1.0962e+01 +  8.3928e+00 +  9.3271e-01 +  7.0100e+01,   3.2548e-02 )
   (1.0946e+01 +  8.4007e+00 +  1.7221e+00 +  1.4023e+02,   3.3597e-02 )
   (1.0924e+01 +  8.3698e+00 +  3.5282e+00 +  2.8015e+02,   2.0296e-02 )
   (1.1005e+01 +  8.4021e+00 +  6.9631e+00 +  5.5927e+02,   1.3556e-02 )
   (1.0920e+01 +  8.4185e+00 +  1.4502e+01 +  1.1242e+03,   6.5204e-03 )
   (1.0937e+01 +  8.3951e+00 +  2.8854e+01 +  2.2436e+03,   9.1551e-03 )
   (1.0908e+01 +  8.4038e+00 +  5.7335e+01 +  4.4778e+03,   4.0360e-03 )
                       }; 

\addplot+[mark=diamond*,green!50!black,dashed,mark options={green!50!black}] coordinates{
(1.6449e+01 + 9.0068e+00 + 1.3267e+00 + 2.3397e+00, 2.4298e-02)
(1.6105e+01 + 9.0494e+00 + 2.5245e+00 + 4.5321e+00, 1.6691e-02)
(1.5845e+01 + 9.0014e+00 + 4.7859e+00 + 8.9063e+00, 7.4323e-03)
(1.5798e+01 + 9.0446e+00 + 6.8114e+00 + 1.8130e+01, 4.9721e-03)
(1.5847e+01 + 9.0457e+00 + 1.5255e+01 + 3.5327e+01, 5.5693e-03)
(1.5868e+01 + 9.0757e+00 + 2.8207e+01 + 7.0066e+01, 2.6363e-03)
(1.5966e+01 + 9.0142e+00 + 5.5131e+01 + 1.4159e+02, 2.0139e-03)
(1.8470e+01 + 9.0230e+00 + 1.0796e+02 + 2.8168e+02, 1.4818e-03)
(1.5812e+01 + 9.0422e+00 + 1.9766e+02 + 5.7011e+02, 1.1530e-03)
(1.5903e+01 + 9.0411e+00 + 4.9352e+02 + 1.1905e+03, 6.2046e-04)
(1.5865e+01 + 9.0144e+00 + 9.3106e+02 + 2.2754e+03, 5.8082e-04)
(1.5851e+01 + 9.0423e+00 + 1.8938e+03 + 4.5267e+03, 3.2155e-04)
                       }; 

\end{axis}
\end{tikzpicture}
}
\resizebox{0.32\linewidth}{!}{%
\begin{tikzpicture}
\begin{axis}[%
  xmode=log,
  ymode=log,
  xlabel={CPU time (TT-qIW)},
  xmin=9e0,xmax=2e4,
  ymin=3e-6,ymax=4e-1,
  legend style={at={(0.99,0.99)},anchor=north east},
]

\addplot+[] coordinates{
                        (   1.1139e+01 +  1.8894e+00 +  9.9413e-02 +  2.5460e+00,     4.0220e-03)
                        (   1.1127e+01 +  1.8995e+00 +  1.0221e-01 +  4.8957e+00,     4.0692e-03)
                        (   1.1093e+01 +  1.8835e+00 +  1.0416e-01 +  9.6264e+00,     2.6611e-03)
                        (   1.1022e+01 +  1.8926e+00 +  1.1144e-01 +  1.9284e+01,     2.2489e-03)
                        (   1.1119e+01 +  1.8880e+00 +  1.2837e-01 +  3.8701e+01,     1.0558e-03)
                        (   1.1066e+01 +  2.1184e+00 +  1.7061e-01 +  7.8135e+01,     5.4976e-04)
                        (   1.1143e+01 +  1.8951e+00 +  2.4859e-01 +  1.5355e+02,     3.8209e-04)
                        (   1.1093e+01 +  1.8900e+00 +  4.2120e-01 +  2.9470e+02,     2.9300e-04)
                        (   1.1122e+01 +  1.8963e+00 +  7.1755e-01 +  5.8524e+02,     1.3049e-04)
                        (   1.1094e+01 +  1.9090e+00 +  1.3568e+00 +  1.1007e+03,     9.7475e-05)
                        (   1.1105e+01 +  1.8958e+00 +  2.7336e+00 +  2.2191e+03,     3.8946e-05)
                        (   1.1109e+01 +  1.9006e+00 +  5.1300e+00 +  4.4526e+03,     4.1083e-05)
                       }; \addlegendentry{$n,\delta=16,0.5$};
\addplot+[] coordinates{
                        (1.3108e+01 +  3.7328e+00 +  1.0599e-01 +  2.4164e+00,    2.0883e-03)
                        (1.1149e+01 +  3.6634e+00 +  1.1022e-01 +  4.6348e+00,    1.4163e-03)
                        (1.1157e+01 +  3.7465e+00 +  1.1824e-01 +  9.0401e+00,    8.9528e-04)
                        (1.1164e+01 +  3.6610e+00 +  1.3293e-01 +  1.8042e+01,    5.7055e-04)
                        (1.1164e+01 +  3.6619e+00 +  1.7270e-01 +  3.5702e+01,    2.5678e-04)
                        (1.1101e+01 +  3.6627e+00 +  2.7383e-01 +  7.1311e+01,    2.1606e-04)
                        (1.1120e+01 +  3.6698e+00 +  4.8847e-01 +  1.4580e+02,    1.3923e-04)
                        (1.1140e+01 +  3.6747e+00 +  8.3033e-01 +  3.0016e+02,    1.3405e-04)
                        (1.1124e+01 +  3.6549e+00 +  1.4903e+00 +  5.7317e+02,    6.9282e-05)
                        (1.1156e+01 +  3.6635e+00 +  2.7722e+00 +  1.1105e+03,    5.0129e-05)
                        (1.1193e+01 +  3.6737e+00 +  5.4103e+00 +  2.2374e+03,    2.3499e-05)
                        (1.1142e+01 +  3.6673e+00 +  1.0694e+01 +  4.5731e+03,    1.3300e-05)
                       }; \addlegendentry{$n,\delta=32,0.1$};

\addplot+[mark=triangle*] coordinates{
                        (   1.1498e+01 +  1.8207e+01 +  2.0855e-01 +  2.4550e+00,   1.0695e-03)
                        (   1.0937e+01 +  1.8293e+01 +  2.2510e-01 +  4.6327e+00,   9.3461e-04)
                        (   1.0929e+01 +  1.8264e+01 +  2.6736e-01 +  9.0304e+00,   4.1979e-04)
                        (   1.0948e+01 +  1.8207e+01 +  3.5051e-01 +  1.7855e+01,   1.2556e-04) 
                        (   1.1002e+01 +  1.8215e+01 +  5.6038e-01 +  3.5449e+01,   8.1101e-05)
                        (   1.0899e+01 +  1.8386e+01 +  9.8271e-01 +  7.0361e+01,   7.7591e-05)
                        (   1.0901e+01 +  1.8285e+01 +  1.8109e+00 +  1.4012e+02,   3.8038e-05)
                        (   1.0949e+01 +  1.8175e+01 +  4.1103e+00 +  2.8089e+02,   3.7592e-05)
                        (   1.0941e+01 +  1.8451e+01 +  7.9474e+00 +  5.6057e+02,   1.6753e-05)
                        (   1.2575e+01 +  1.8064e+01 +  1.4807e+01 +  1.1220e+03,   1.4232e-05)
                        (   1.2690e+01 +  1.8030e+01 +  3.0562e+01 +  2.2453e+03,   1.2002e-05)
                        (   1.0985e+01 +  1.8228e+01 +  5.8713e+01 +  4.4962e+03,   9.0773e-06)
                       }; \addlegendentry{$n,\delta=64,0.01$};

\addplot+[mark=diamond*,green!50!black] coordinates{
                        (      1.5999e+01 +  1.8274e+01 +  3.3021e-01 +  2.5326e+00,   6.1465e-04)
                        (      1.5947e+01 +  1.8224e+01 +  6.9381e-01 +  4.8696e+00,   3.5152e-04)
                        (      1.5879e+01 +  1.8287e+01 +  1.4246e+00 +  9.4671e+00,   2.1764e-04)
                        (      1.6009e+01 +  1.8350e+01 +  2.7855e+00 +  1.8853e+01,   1.4733e-04) 
                        (      1.5927e+01 +  1.8240e+01 +  5.7703e+00 +  3.7494e+01,   1.1814e-04)
                        (      1.5906e+01 +  1.8226e+01 +  1.1837e+01 +  7.4276e+01,   6.1204e-05)
                        (      1.5957e+01 +  1.8172e+01 +  2.2959e+01 +  1.5105e+02,   5.1043e-05)
                        (      1.5939e+01 +  1.8258e+01 +  4.4318e+01 +  2.9860e+02,   3.9600e-05)
                        (      1.5904e+01 +  1.8289e+01 +  8.5183e+01 +  5.9212e+02,   1.2592e-05)
                        (      1.5936e+01 +  1.8212e+01 +  1.9183e+02 +  1.1868e+03,   9.0572e-06)
                        (      1.5932e+01 +  1.8235e+01 +  3.7807e+02 +  2.3888e+03,   8.2076e-06)
                        (      1.5982e+01 +  1.8058e+01 +  7.4154e+02 +  4.7820e+03,   5.1881e-06)
                       }; \addlegendentry{$64,0.01$, 2L};

\pgfplotsset{cycle list shift=-4};

\addplot+[dashed] coordinates{
                        (   1.1139e+01 +  1.8894e+00 +  9.9413e-02 +  2.5460e+00,     2.2967e-01)
                        (   1.1127e+01 +  1.8995e+00 +  1.0221e-01 +  4.8957e+00,     1.7451e-01)
                        (   1.1093e+01 +  1.8835e+00 +  1.0416e-01 +  9.6264e+00,     9.2184e-02)
                        (   1.1022e+01 +  1.8926e+00 +  1.1144e-01 +  1.9284e+01,     7.0071e-02)
                        (   1.1119e+01 +  1.8880e+00 +  1.2837e-01 +  3.8701e+01,     2.5913e-02)
                        (   1.1066e+01 +  2.1184e+00 +  1.7061e-01 +  7.8135e+01,     1.5320e-02)
                        (   1.1143e+01 +  1.8951e+00 +  2.4859e-01 +  1.5355e+02,     1.1848e-02)
                        (   1.1093e+01 +  1.8900e+00 +  4.2120e-01 +  2.9470e+02,     7.9577e-03)
                        (   1.1122e+01 +  1.8963e+00 +  7.1755e-01 +  5.8524e+02,     5.1347e-03)
                        (   1.1094e+01 +  1.9090e+00 +  1.3568e+00 +  1.1007e+03,     4.3689e-03)
                        (   1.1105e+01 +  1.8958e+00 +  2.7336e+00 +  2.2191e+03,     1.8541e-03)
                        (   1.1109e+01 +  1.9006e+00 +  5.1300e+00 +  4.4526e+03,     1.0144e-03)
                       }; 
\addplot+[dashed] coordinates{
                        (1.3108e+01 +  3.7328e+00 +  1.0599e-01 +  2.4164e+00,   1.4660e-01)
                        (1.1149e+01 +  3.6634e+00 +  1.1022e-01 +  4.6348e+00,   1.4487e-01)
                        (1.1157e+01 +  3.7465e+00 +  1.1824e-01 +  9.0401e+00,   5.0458e-02)
                        (1.1164e+01 +  3.6610e+00 +  1.3293e-01 +  1.8042e+01,   2.5822e-02)
                        (1.1164e+01 +  3.6619e+00 +  1.7270e-01 +  3.5702e+01,   2.0454e-02)
                        (1.1101e+01 +  3.6627e+00 +  2.7383e-01 +  7.1311e+01,   8.6385e-03)
                        (1.1120e+01 +  3.6698e+00 +  4.8847e-01 +  1.4580e+02,   7.4490e-03)
                        (1.1140e+01 +  3.6747e+00 +  8.3033e-01 +  3.0016e+02,   4.0763e-03)
                        (1.1124e+01 +  3.6549e+00 +  1.4903e+00 +  5.7317e+02,   2.7994e-03)
                        (1.1156e+01 +  3.6635e+00 +  2.7722e+00 +  1.1105e+03,   1.7197e-03)
                        (1.1193e+01 +  3.6737e+00 +  5.4103e+00 +  2.2374e+03,   9.0654e-04)
                        (1.1142e+01 +  3.6673e+00 +  1.0694e+01 +  4.5731e+03,   7.2459e-04)
                       }; 

\addplot+[dashed,mark=triangle*] coordinates{
                        (   1.1498e+01 +  1.8207e+01 +  2.0855e-01 +  2.4550e+00,   1.4630e-01)
                        (   1.0937e+01 +  1.8293e+01 +  2.2510e-01 +  4.6327e+00,   1.5113e-01)
                        (   1.0929e+01 +  1.8264e+01 +  2.6736e-01 +  9.0304e+00,   6.3555e-02)
                        (   1.0948e+01 +  1.8207e+01 +  3.5051e-01 +  1.7855e+01,   2.8396e-02) 
                        (   1.1002e+01 +  1.8215e+01 +  5.6038e-01 +  3.5449e+01,   2.0650e-02)
                        (   1.0899e+01 +  1.8386e+01 +  9.8271e-01 +  7.0361e+01,   1.1737e-02)
                        (   1.0901e+01 +  1.8285e+01 +  1.8109e+00 +  1.4012e+02,   4.2961e-03)
                        (   1.0949e+01 +  1.8175e+01 +  4.1103e+00 +  2.8089e+02,   2.7076e-03)
                        (   1.0941e+01 +  1.8451e+01 +  7.9474e+00 +  5.6057e+02,   1.5543e-03)
                        (   1.2575e+01 +  1.8064e+01 +  1.4807e+01 +  1.1220e+03,   1.0456e-03)
                        (   1.2690e+01 +  1.8030e+01 +  3.0562e+01 +  2.2453e+03,   9.6637e-04)
                        (   1.0985e+01 +  1.8228e+01 +  5.8713e+01 +  4.4962e+03,   5.7682e-04)
                       }; 

\addplot+[mark=diamond*,green!50!black,dashed,mark options={green!50!black}] coordinates{
                        (      1.5999e+01 +  1.8274e+01 +  3.3021e-01 +  2.5326e+00,   3.0726e-02)
                        (      1.5947e+01 +  1.8224e+01 +  6.9381e-01 +  4.8696e+00,   2.3797e-02)
                        (      1.5879e+01 +  1.8287e+01 +  1.4246e+00 +  9.4671e+00,   1.6271e-02)
                        (      1.6009e+01 +  1.8350e+01 +  2.7855e+00 +  1.8853e+01,   4.7887e-03) 
                        (      1.5927e+01 +  1.8240e+01 +  5.7703e+00 +  3.7494e+01,   4.4841e-03)
                        (      1.5906e+01 +  1.8226e+01 +  1.1837e+01 +  7.4276e+01,   3.4781e-03)
                        (      1.5957e+01 +  1.8172e+01 +  2.2959e+01 +  1.5105e+02,   2.0166e-03)
                        (      1.5939e+01 +  1.8258e+01 +  4.4318e+01 +  2.9860e+02,   2.2898e-03)
                        (      1.5904e+01 +  1.8289e+01 +  8.5183e+01 +  5.9212e+02,   1.3255e-03)
                        (      1.5936e+01 +  1.8212e+01 +  1.9183e+02 +  1.1868e+03,   6.0984e-04)
                        (      1.5932e+01 +  1.8235e+01 +  3.7807e+02 +  2.3888e+03,   3.3792e-04)
                        (      1.5982e+01 +  1.8058e+01 +  7.4154e+02 +  4.7820e+03,   1.7269e-04)
                       }; 

\end{axis}
\end{tikzpicture}
}
\end{figure*}
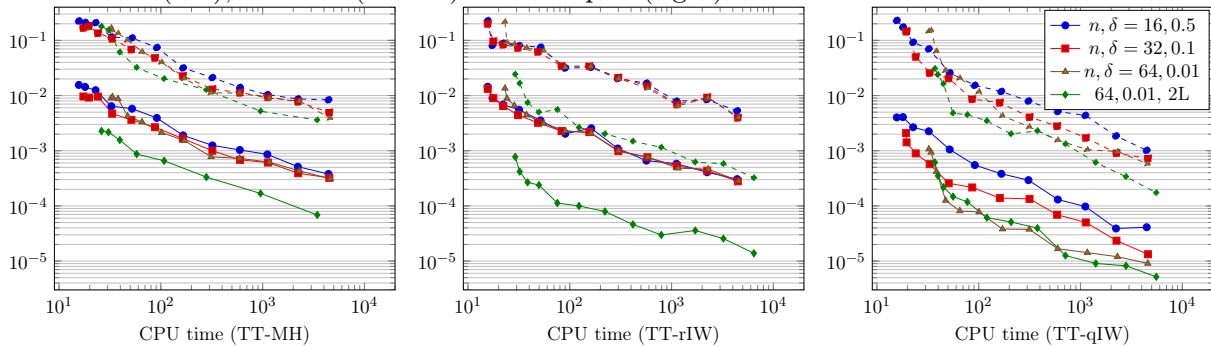


We also benchmark the algorithms in a more challenging scenario of a smaller noise variance $\sigma_n^2=10^{-3}$.
Due to nonlinearity of the forward model, the posterior density function is concentrated
along a complicated high-dimensional manifold, for smaller $\sigma_n$.
This increases all complexity indicators: the ranks of the TT approximation, the IACT in TT-MH and in DRAM and the variances in the ratio estimators. Since the density function is more concentrated,
we choose finer parameters $n=64$ and $\delta=0.03$ for the TT approximation.
Nevertheless, in Fig. \ref{fig:ff-sn0.001} we see that even though the set-up cost is larger, the TT-based samplers are still all significantly more efficient than DRAM. Due to the stronger concentration of $\pi$, the performance of the basic ratio estimator QMC-rat is worse. On the other hand, the QMC estimator TT-qIW with TT importance weighting is again the most performant method. Note that it is the only method that reduces the quadrature error to the size of the discretization error within the considered limit of one million samples.

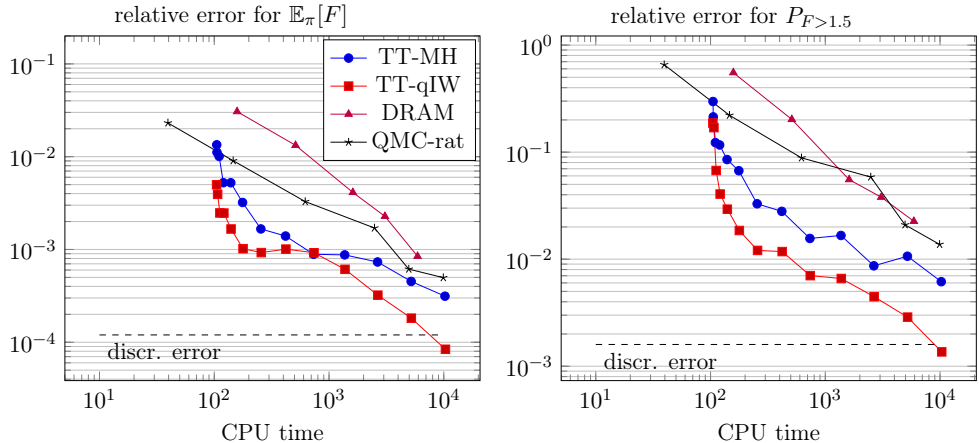
\begin{figure*}
\centering
\caption{Inverse diffusion problem: Relative errors in the mean flux (left) and in the exceedance probability (right) plotted against the total CPU times (sec.) for $\sigma_n^2=10^{-3}$.}
\label{fig:ff-sn0.001}
\resizebox{0.39\linewidth}{!}{%
\begin{tikzpicture}
  \begin{axis}[%
  xmode=log,
  ymode=log,
  ylabel={relative error for $\mathbb{E}_{\pi}[F]$},
  xlabel={CPU time},
  ymax=2e-1,
  legend style={at={(0.99,0.99)},anchor=north east},
  ]
  \addplot+[] coordinates{(   1.3330e+01 +  8.8792e+01 +  3.6088e-01 +  2.6513e+00,   1.3431e-02)
                          (   1.1594e+01 +  8.8666e+01 +  4.4891e-01 +  4.8912e+00,   1.1149e-02)
                          (   1.1592e+01 +  8.8636e+01 +  6.0994e-01 +  9.3956e+00,   1.0079e-02)
                          (   1.1598e+01 +  8.8889e+01 +  9.7644e-01 +  1.8597e+01,   5.2406e-03)
                          (   1.1665e+01 +  8.9051e+01 +  1.7170e+00 +  3.6845e+01,   5.2343e-03)
                          (   1.1552e+01 +  8.8741e+01 +  3.3137e+00 +  7.2883e+01,   3.2112e-03)
                          (   1.1566e+01 +  8.8866e+01 +  7.6022e+00 +  1.4674e+02,   1.6645e-03)
                          (   1.1568e+01 +  8.8819e+01 +  1.9704e+01 +  2.9790e+02,   1.3951e-03)
                          (   1.1570e+01 +  8.8580e+01 +  4.0292e+01 +  5.9392e+02,   8.8740e-04)
                          (   1.1561e+01 +  8.9127e+01 +  8.1178e+01 +  1.1883e+03,   8.7386e-04)
                          (   1.1570e+01 +  8.8882e+01 +  1.6370e+02 +  2.3851e+03,   7.3297e-04)
                          (   1.1616e+01 +  8.8836e+01 +  3.2759e+02 +  4.7685e+03,   4.5257e-04)
                          (   1.1579e+01 +  8.8646e+01 +  6.5778e+02 +  9.4873e+03,   3.1372e-04)
                          }; \addlegendentry{TT-MH};  


  \addplot+[] coordinates{(   1.1634e+01 +  9.0110e+01 +  4.3024e-01 +  2.6397e+00,   4.9806e-03)
                          (   1.1566e+01 +  8.9982e+01 +  5.2149e-01 +  4.9376e+00,   3.9291e-03)
                          (   1.1648e+01 +  9.0148e+01 +  6.9160e-01 +  9.4356e+00,   2.4745e-03)
                          (   1.1584e+01 +  9.0007e+01 +  1.0868e+00 +  1.8594e+01,   2.4675e-03)
                          (   1.1590e+01 +  8.9825e+01 +  1.8374e+00 +  3.6858e+01,   1.6613e-03)
                          (   1.1592e+01 +  9.0399e+01 +  3.3070e+00 +  7.2963e+01,   1.0192e-03)
                          (   1.1574e+01 +  9.0318e+01 +  7.8752e+00 +  1.4709e+02,   9.2513e-04)
                          (   1.1622e+01 +  9.0287e+01 +  2.0535e+01 +  2.9860e+02,   1.0086e-03)
                          (   1.1619e+01 +  9.0118e+01 +  4.2098e+01 +  5.9589e+02,   9.2170e-04)
                          (   1.1592e+01 +  9.0020e+01 +  8.3422e+01 +  1.1932e+03,   6.1094e-04)
                          (   1.1572e+01 +  9.0350e+01 +  1.6839e+02 +  2.3937e+03,   3.2171e-04)
                          (   1.1573e+01 +  9.0237e+01 +  3.3436e+02 +  4.7783e+03,   1.8175e-04)
                          (   1.3864e+01 +  9.0075e+01 +  6.7327e+02 +  9.5649e+03,   8.4016e-05)
                         }; \addlegendentry{TT-qIW};

  \addplot+[purple,mark=triangle*,mark options={purple}] coordinates{(1.5806e+02, 3.0619e-02)
                                (5.1087e+02, 1.3259e-02)
                                (1.6139e+03, 4.1177e-03)
                                (3.0717e+03, 2.2759e-03)
                                (5.9127e+03, 8.4436e-04)
                                }; \addlegendentry{DRAM};

  \addplot+[black] coordinates{
                          (3.9494e+01, 2.3107e-02)
                          (1.4621e+02, 9.0435e-03)
                          (6.2069e+02, 3.2819e-03)
                          (2.4901e+03, 1.7034e-03)
                          (4.9564e+03, 6.1367e-04)
                          (9.9083e+03, 4.9681e-04)
                         }; \addlegendentry{QMC-rat};

  \addplot+[no marks,dashed,black, domain=1e1:1e4] {1.1991e-04} node[pos=0.0,anchor=north west] {discr. error};
  \end{axis}
\end{tikzpicture}
}
\resizebox{0.39\linewidth}{!}{%
\begin{tikzpicture}
  \begin{axis}[%
  xmode=log,
  ymode=log,
  ylabel={relative error for $P_{F>1.5}$},
  xlabel={CPU time},
  legend style={at={(0.99,0.99)},anchor=north east},
  ]
  \addplot+[] coordinates{(   1.3330e+01 +  8.8792e+01 +  3.6088e-01 +  2.6513e+00,   2.9717e-01)
                          (   1.1594e+01 +  8.8666e+01 +  4.4891e-01 +  4.8912e+00,   2.1356e-01)
                          (   1.1592e+01 +  8.8636e+01 +  6.0994e-01 +  9.3956e+00,   1.2273e-01)
                          (   1.1598e+01 +  8.8889e+01 +  9.7644e-01 +  1.8597e+01,   1.1671e-01)
                          (   1.1665e+01 +  8.9051e+01 +  1.7170e+00 +  3.6845e+01,   8.5267e-02)
                          (   1.1552e+01 +  8.8741e+01 +  3.3137e+00 +  7.2883e+01,   6.7049e-02)
                          (   1.1566e+01 +  8.8866e+01 +  7.6022e+00 +  1.4674e+02,   3.2925e-02)
                          (   1.1568e+01 +  8.8819e+01 +  1.9704e+01 +  2.9790e+02,   2.7980e-02)
                          (   1.1570e+01 +  8.8580e+01 +  4.0292e+01 +  5.9392e+02,   1.5621e-02)
                          (   1.1561e+01 +  8.9127e+01 +  8.1178e+01 +  1.1883e+03,   1.6661e-02)
                          (   1.1570e+01 +  8.8882e+01 +  1.6370e+02 +  2.3851e+03,   8.6681e-03)
                          (   1.1616e+01 +  8.8836e+01 +  3.2759e+02 +  4.7685e+03,   1.0656e-02)
                          (   1.1579e+01 +  8.8646e+01 +  6.5778e+02 +  9.4873e+03,   6.1626e-03)
                          }; 


  \addplot+[] coordinates{(   1.1634e+01 +  9.0110e+01 +  4.3024e-01 +  2.6397e+00,   1.8702e-01)
                          (   1.1566e+01 +  8.9982e+01 +  5.2149e-01 +  4.9376e+00,   1.6947e-01)
                          (   1.1648e+01 +  9.0148e+01 +  6.9160e-01 +  9.4356e+00,   6.7395e-02)
                          (   1.1584e+01 +  9.0007e+01 +  1.0868e+00 +  1.8594e+01,   4.0595e-02)
                          (   1.1590e+01 +  8.9825e+01 +  1.8374e+00 +  3.6858e+01,   2.9320e-02)
                          (   1.1592e+01 +  9.0399e+01 +  3.3070e+00 +  7.2963e+01,   1.8531e-02)
                          (   1.1574e+01 +  9.0318e+01 +  7.8752e+00 +  1.4709e+02,   1.2079e-02)
                          (   1.1622e+01 +  9.0287e+01 +  2.0535e+01 +  2.9860e+02,   1.1776e-02)
                          (   1.1619e+01 +  9.0118e+01 +  4.2098e+01 +  5.9589e+02,   7.0201e-03)
                          (   1.1592e+01 +  9.0020e+01 +  8.3422e+01 +  1.1932e+03,   6.6003e-03)
                          (   1.1572e+01 +  9.0350e+01 +  1.6839e+02 +  2.3937e+03,   4.4602e-03)
                          (   1.1573e+01 +  9.0237e+01 +  3.3436e+02 +  4.7783e+03,   2.8745e-03)
                          (   1.3864e+01 +  9.0075e+01 +  6.7327e+02 +  9.5649e+03,   1.3615e-03)
                         }; 

  \addplot+[purple,mark=triangle*,mark options={purple}] coordinates{(1.5806e+02, 5.5297e-01)
                                (5.1087e+02, 2.0208e-01)
                                (1.6139e+03, 5.5398e-02)
                                (3.0717e+03, 3.7944e-02)
                                (5.9127e+03, 2.2592e-02)
                                }; 

  \addplot+[black] coordinates{
                          (3.9494e+01, 6.5612e-01)
                          (1.4621e+02, 2.2114e-01)
                          (6.2069e+02, 8.8401e-02)
                          (2.4901e+03, 5.8528e-02)
                          (4.9564e+03, 2.0935e-02)
                          (9.9083e+03, 1.3746e-02)
                         }; 

  \addplot+[no marks,dashed,black, domain=1e1:1e4] {1.5970e-03}  node[pos=0.0,anchor=north west] {discr. error};
  \end{axis}
\end{tikzpicture}
}
\end{figure*}

Finally, we profile the computational cost of all the various components in the TT approaches with respect to the total error (truncation, spatial discretization and quadrature).
We vary the spatial mesh size $h$ from $2^{-5}$ to $2^{-7}$ and estimate the convergence rate of the discretization error (Fig. \ref{fig:ff-d}, left).
Then, we choose the other approximation parameters in order to equilibrate the errors.
In particular, the number of random variables $d$ and the number of samples $N$ are chosen such that the KL truncation error in \eqref{eq:kle_art} and the quadrature error of the TT-qIW method are equal to the discretization error, respectively (see Fig. \ref{fig:ff-d}, left).

The solid lines in Fig. \ref{fig:ff-d} (right) give the computational times necessary for the various components of our algorithm (with all errors equilibrated),
as a function of $d$ (and thus also as a function of $h^{-1}$ and $N$):
the ALS-Cross algorithm to build the TT surrogate of $u_h$,
the TT cross algorithm to build the TT surrogate of $\pi$,
the TT-CD sampling procedure for the $N$ samples $x^\ell$, $\ell=1,\ldots,N$
and the evaluation of $\pi$ at the $N$ samples.
Clearly the $N$ PDE solves in the evaluation of $\pi$ are the dominant part and the complexity of these evaluations grows fairly rapidly due to the spatial mesh refinement and the increase in $N$.
The TT cross algorithm for building $\tilde\pi$ (once a TT surrogate of the forward solution is available) and the cost of the TT-CD sampler depend on the dimension $d$ and on the TT ranks of $\tilde\pi$ (which grow very mildly with $d$ and $h^{-1}$).

In addition, we also ran all the experiments with $h=2^{-6}$ and $N=2^{14}$ fixed, varying only $d$ to explicitly see the growth with $d$.
The timings for these experiments are plotted using dashed lines.
The cost for the ALS-Cross algorithm to build $\tilde{u}_h$ grows cubically in $d$,
while the cost to build the TT surrogate $\tilde\pi$ and the cost of the TT-CD sampling procedure grow linearly with $d$.
Since the evaluation of $\pi$ is dominated by the cost of the PDE solve, its cost does not grow with dimension.
This shows that the TT-CD sampler is an extremely effective surrogate for high dimensions when the model admits a natural extension in $d$ (e.g. it converges as $d\rightarrow \infty$, or the variables remain locally correlated).

\begin{figure*}
\centering
\caption{Inverse diffusion problem: Dimension ($d$) dependence of discretization error and numbers of samples (left) and CPU times of the various algorithmic components in TT-qIW (right); solid lines with equilibrated errors, dashed lines with $h=2^{-6}$ and $N=2^{14}$ fixed.}
\label{fig:ff-d}
\resizebox{0.415\linewidth}{!}{
\begin{tikzpicture}
  \begin{axis}[%
  xmode=log,
  ymode=log,
  xlabel=$d$ (log scale),
  xmin=7,xmax=25,
  xtick={8,12,16,24},
  xticklabels={8,12,16,24},
  ylabel={relative discretisation error},
  legend style={at={(0.01,0.6)},anchor=west},
  y label style={at={(-0.15,1.0)},rotate=0,color={blue},anchor=south west,at={(0.0,1.0)}},every y tick label/.style={blue},
  grid=none,
  ]
  \addplot+[] coordinates{( 9, 6.0460e-04+4.796e-04)
                          (15, 9.3359e-05+1.199e-04)
                          (23, 2.0939e-05+2.998e-05)
                         };
  \end{axis}
  \begin{axis}[%
  xmode=log,
  ymode=log,
  ylabel={\# samples $N$},
  xmin=7,xmax=25,
  xtick={8,12,16,24},
  xticklabels={8,12,16,24},
  legend style={at={(0.99,0.99)},anchor=north east},
  axis y line*=right, y label style={at={(1.1,1.0)},anchor=south east,rotate=0,color={red}},every y tick label/.style={red},
  axis x line=none,
  grid=none,
  ]
  \addplot+[red, mark options={red},mark=+] coordinates{
                                              ( 9, 2^10)
                                              (15, 2^14)
                                              (23, 2^17)
                                            };
  \end{axis}
\end{tikzpicture}
}
\resizebox{0.39\linewidth}{!}{
\begin{tikzpicture}
  \begin{axis}[%
  xmode=log,
  ymode=log,
  xlabel=$d$ (log scale),
  ylabel={CPU time},
  ymax=2e6,
  legend columns=2,
  legend style={/tikz/column 2/.style={column sep=10pt},at={(0.99,0.99)},anchor=north east}
  ]
  \addplot+[blue,solid,mark=*] coordinates{( 9, 5.5055)
                          (15, 26.6886)
                          (23, 142.7671)
                         }; \addlegendentry{ALS-Cross($\tilde u$)};
  \addplot+[red,solid,mark=square*] coordinates{( 9, 2.6447)
                          (15, 4.5219)
                          (23, 7.1680)
                         }; \addlegendentry{TT-Cross($\tilde\pi$)};
  \addplot+[orange,solid,mark=diamond*] coordinates{( 9, 0.1301)
                          (15, 1.3359)
                          (23, 14.9513)
                         }; \addlegendentry{TT-CD($x^\ell$)};
  \addplot+[black,solid,mark=triangle*] coordinates{( 9, 4.3790)
                          (15, 282.8199)
                          (23, 1.0189e+04)
                         }; \addlegendentry{Exact $\pi(x^\ell)$};

  \addplot+[blue,dashed,mark=o,line width=1pt] coordinates{
                                            ( 9, 8.9671)                     
                                            (15, 26.8375)                    
                                            (22, 107.847)                    
                                            (23, 114.701)                    
                                            (33, 360.156)                    
                                            (36, 497.731)                    
                                            (47, 967.644)                    
                                            (70, 2456.96)                    
                                            (100,5215.22)                    
                         }; 
  \addplot+[red,dashed,mark=square,line width=1pt] coordinates{
                                                ( 9, 2.6773)
                                                (15, 4.5624)
                                                (22, 4.49238)
                                                (23, 4.67722)
                                                (33, 6.5398)
                                                (36, 7.09813)
                                                (47, 8.88098)
                                                (70, 11.5215)
                                                (100,16.1314)
                         }; 
  \addplot+[orange,dashed,mark=diamond,line width=1pt] coordinates{
                                                    ( 9, 0.7387)
                                                    (15, 1.3300)
                                                    (22, 1.53788)
                                                    (23, 1.60291)
                                                    (33, 2.04577)
                                                    (36, 2.22716)
                                                    (47, 2.6007)
                                                    (70, 3.22144)
                                                    (100,4.11477)
                         }; 
  \addplot+[black,dashed,mark=triangle,line width=1pt] coordinates{
                                           ( 9, 299.9616)
                                           (15, 282.0095)
                                           (22, 282.399)
                                           (23, 281.291)
                                           (33, 284.154)
                                           (36, 283.647)
                                           (47, 283.593)
                                           (70, 278.719)
                                           (100,283.908)
                         }; 
  \end{axis}


\end{tikzpicture}
}
\end{figure*}
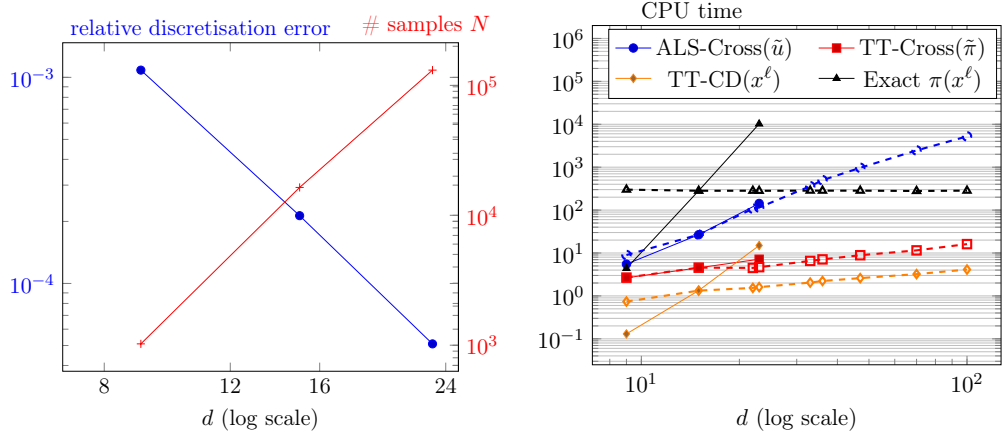


\section{Conclusion}
We presented a method for computational inference based on function approximation of the target PDF. That task has traditionally been viewed as infeasible for general multivariate distributions due to the exponential growth in cost for grid-based representations. The advent of the tensor train representation, amongst other hierarchical representations, is a significant development that circumvents that `curse of dimensionality'. Our main contributions here have been showing that the conditional distribution method can be implemented efficiently for PDFs represented in (interpolated) TT format, and that quasi-Monte Carlo quadrature is both feasible and efficient with bias correction through a control-variate structure or via importance weighting. The latter scheme was most efficient across all computed examples and parameter choices.

We adapted existing tools for \emph{tensors}, i.e., multi-dimensional arrays, in particular the TT cross approximation scheme, and tools for basic linear algebra. We expect that substantial improvement could be achieved with algorithms tailored for the  specific tasks required, such as function approximation, and the setting of coordinates and bounding region. Nevertheless, the algorithms presented are already very promising, providing sample-based inference that is more computationally efficient than a benchmark MCMC, the DRAM MCMC. We demonstrated the algorithms in three stylized examples: a time-to-failure model; an inverse problem; and sampling from a non-Gaussian PDF. Extensive computations showed that in each example the methods performed as theory predicts, and that scaling with dimension is linear.

We view the methods developed here as a promising development in Markov chain Monte Carlo methods. It is noteworthy, however, that our most efficient algorithm (TT-qIW), implements neither a Markov chain for the basic sampler, nor uses standard Monte Carlo quadrature. Instead, points from a randomized quasi-Monte Carlo (QMC) lattice are mapped into state space by the inverse Rosenblatt transform, implemented in the TT-CD algorithm, with unbiased estimates available via importance-weighted QMC quadrature.  Nevertheless, the basic structure remains a proposal mechanism that is modified to produce a sequence of points that is ergodic for the target distribution.

Numerical experiments were carried out in Matlab R2016b on an Intel Xeon E5-2650 CPU at the Balena High Performance Computing Service at the University of Bath, using one core per run.
We implemented Algorithm~\ref{alg:samp} in Matlab and C+Python, using the TT-Toolbox in Matlab \cite{tt-toolbox} and Python (available at
\url{http://github.com/oseledets/ttpy}
), respectively. The code is available at
\url{http://github.com/dolgov/tt-irt};
we welcome suggestions or feedback from users.\vspace{2ex}

\noindent
\textbf{Acknowledgments.}
SD is grateful for the support from the Engineering and Physical Sciences Research Council (EPSRC) through Fellowship EP/M019004/1. This research was started while CF was Global Chair in the Institute for Mathematical Innovation (IMI) at the University of Bath. 

\bibliographystyle{siam}
\bibliography{bayes-tt}

\begin{thebibliography}{10}

\bibitem{atchade-adaMALA-2006}
{\sc Y.~F. Atchad{\'e}}, {\em An adaptive version for the metropolis adjusted
  langevin algorithm with a truncated drift}, Methodology and Computing in
  Applied Probability, 8 (2006), pp.~235--254.

\bibitem{Ballani-HTUQOut-2015}
{\sc J.~Ballani and L.~Grasedyck}, {\em Hierarchical tensor approximation of
  output quantities of parameter-dependent {PDE}s}, SIAM/ASA J. Uncertain.
  Quantif., 3 (2015), pp.~852--872.

\bibitem{handbook2011}
{\sc S.~Brooks, A.~Gelman, G.~Jones, and X.-L. Meng}, eds., {\em Handbook of
  {M}arkov chain {M}onte {C}arlo}, CRC Press, 2011.

\bibitem{twalk}
{\sc J.~Christen and C.~Fox}, {\em A general purpose sampling algorithm for
  continuous distributions (the t-walk)}, Bayesian Anal., 5 (2010),
  pp.~263--282.

\bibitem{devroye-rvgen-1986}
{\sc L.~Devroye}, {\em Non-Uniform Random Variate Generation}, Springer-Verlag,
  1986.

\bibitem{Dick-Acta-2013}
{\sc J.~Dick, F.~Kuo, and I.~Sloan}, {\em High-dimensional integration: {T}he
  quasi-{Monte Carlo} way}, Acta Numer., 22 (2013), pp.~133--288.

\bibitem{scheichl-mlmcmc-2015}
{\sc T.~Dodwell, C.~Ketelsen, R.~Scheichl, and A.~Teckentrup}, {\em A
  hierarchical multilevel {M}arkov chain {M}onte {C}arlo algorithm with
  applications to uncertainty quantification in subsurface flow}, SIAM/ASA J.
  Uncertain. Quantif., 3 (2015), pp.~1075--1108.

\bibitem{ds-amen-2014}
{\sc S.~Dolgov and D.~Savostyanov}, {\em Alternating minimal energy methods for
  linear systems in higher dimensions}, SIAM J. Sci. Comput., 36 (2014),
  pp.~A2248--A2271.

\bibitem{ds-alscross-2019}
{\sc S.~Dolgov and R.~Scheichl}, {\em A hybrid alternating least squares --
  {TT} cross algorithm for parametric {PDEs}}, SIAM/ASA J. Uncertain. Quantif.,
  7 (2019), pp.~260--291.

\bibitem{eigel-adapt-stoch-fem-2014}
{\sc M.~Eigel, C.~Gittelson, C.~Schwab, and E.~Zander}, {\em Adaptive
  stochastic {G}alerkin {FEM}}, Comput. Method. Appl. M., 270 (2014),
  pp.~247--269.

\bibitem{Eigel-bayes-2017}
{\sc M.~Eigel, M.~Marschall, and R.~Schneider}, {\em Sampling-free {B}ayesian
  inversion with adaptive hierarchical tensor representations}, Inverse Probl.,
  34 (2018), p.~035010.

\bibitem{FHC2013}
{\sc C.~Fox, H.~Haario, and J.~Christen}, {\em Inverse problems}, in Bayesian
  Theory and Applications, P.~Damien, P.~Dellaportas, N.~Polson, and
  D.~Stephens, eds., Oxford University Press, 2013, pp.~619--643.

\bibitem{FN1997}
{\sc C.~Fox and G.~Nicholls}, {\em Sampling conductivity images via {MCMC}}, in
  The Art and Science of {B}ayesian Image Analysis, Leeds Annual Statistics
  Research Workshop, 1997, pp.~91--100.

\bibitem{FN2016}
{\sc C.~Fox and R.~Norton}, {\em Fast sampling in a linear-{G}aussian inverse
  problem}, SIAM/ASA J. Uncertain. Quantif., 4 (2016), pp.~1191--1218.

\bibitem{FoxParker2017}
{\sc C.~Fox and A.~Parker}, {\em Accelerated {G}ibbs sampling of normal
  distributions using matrix splittings and polynomials}, Bernoulli, 23 (2017),
  pp.~3711--3743.

\bibitem{gilks1992adaptive}
{\sc W.~Gilks and P.~Wild}, {\em Adaptive rejection sampling for {G}ibbs
  sampling}, Appl. Statist.,  (1992), pp.~337--348.

\bibitem{gvl-matcomp-2013}
{\sc G.~H. Golub and C.~F. Van~Loan}, {\em Matrix Computations}, Johns Hopkins
  University Press, 4th~ed., 2013.

\bibitem{gostz-maxvol-2010}
{\sc S.~Goreinov, I.~Oseledets, D.~Savostyanov, E.~Tyrtyshnikov, and
  N.~Zamarashkin}, {\em How to find a good submatrix}, in Matrix Methods:
  Theory, Algorithms, Applications, V.~Olshevsky and E.~Tyrtyshnikov, eds.,
  World Scientific, 2010, pp.~247--256.

\bibitem{gore-tyrt-zam-1997}
{\sc S.~Goreinov, E.~Tyrtyshnikov, and N.~Zamarashkin}, {\em A theory of
  pseudoskeleton approximations}, Linear Algebra Appl., 261 (1997), pp.~1--21.

\bibitem{Christen-shock-2005}
{\sc H.~Gutierrez-Pulido, V.~Aguirre-Torres, and J.~Christen}, {\em A practical
  method for obtaining prior distributions in reliability}, IEEE T. Reliab., 54
  (2005), pp.~262--269.

\bibitem{Haario-DRAM-2006}
{\sc H.~Haario, M.~Laine, A.~Mira, and E.~Saksman}, {\em {DRAM}: {E}fficient
  adaptive {MCMC}}, Stat. Comput., 16 (2006), pp.~339--354.

\bibitem{HaggstromRosenthal2007}
{\sc O.~H{\"a}ggstr{\"o}m and J.~Rosenthal}, {\em On variance conditions for
  {M}arkov chain {CLT}s}, Electron. Comm. Probab., 12 (2007), pp.~454--464.

\bibitem{hoang-mlmcmc-2013}
{\sc V.~H. Hoang, C.~Schwab, and A.~M. Stuart}, {\em Complexity analysis of
  accelerated {MCMC} methods for {Bayesian} inversion}, Inverse Probl., 29
  (2013), p.~085010.

\bibitem{hormann-rvgen-2004}
{\sc W.~H\"{o}rmann, J.~Leydold, and G.~Derflinger}, {\em Automatic Nonuniform
  Random Variate Generation}, Springer-Verlag, 2004.

\bibitem{johnson1987multivariate}
{\sc M.~Johnson}, {\em Multivariate Statistical Simulation}, John Wiley \&
  Sons, 1987.

\bibitem{khor-rstruct-2006}
{\sc B.~Khoromskij}, {\em Structured {rank-$(r_1,\ldots,r_d)$} decomposition of
  function-related operators in {$\mathbb{R}^d$}}, Comput. Methods Appl. Math,
  6 (2006), pp.~194--220.

\bibitem{Scheichl-mlqmc-lognorm-2017}
{\sc F.~Kuo, R.~Scheichl, C.~Schwab, I.~Sloan, and E.~Ullmann}, {\em Multilevel
  quasi-{M}onte {C}arlo methods for lognormal diffusion problems}, Math.
  Comput., 86 (2017), pp.~2827--2860.

\bibitem{liu1996}
{\sc J.~Liu}, {\em Metropolized independent sampling with comparisons to
  rejection sampling and importance sampling}, Stat. Comput., 6 (1996),
  pp.~113--119.

\bibitem{MRL2015}
{\sc L.~Martino, J.~Read, and D.~Luengo}, {\em Independent doubly adaptive
  rejection {M}etropolis sampling within {G}ibbs sampling}, IEEE T. Signal
  Proces., 63 (2015), pp.~3123--3138.

\bibitem{meeker-reliability-book-1998}
{\sc W.~Meeker and L.~Escobar}, {\em Statsitical Methods for Reliability Data},
  John Wiley \& Sons, 1998.

\bibitem{MengersenTweedie1996}
{\sc K.~L. Mengersen and R.~L. Tweedie}, {\em Rates of convergence of the
  {H}astings and {M}etropolis algorithms}, Ann. Stat., 24 (1996), pp.~101--121.

\bibitem{MeyerCaiPerron2008}
{\sc R.~Meyer, B.~Cai, and F.~Perron}, {\em Adaptive rejection {M}etropolis
  sampling using {L}agrange interpolation polynomials of degree 2}, Comput.
  Stat. Data An., 52 (2008), pp.~3408--3423.

\bibitem{mira2001}
{\sc A.~Mira}, {\em Ordering and improving the performance of {M}onte {C}arlo
  {M}arkov chains}, Stat. Sci., 16 (2001), pp.~340--350.

\bibitem{MiraGeyer1999}
{\sc A.~Mira and C.~J. Geyer}, {\em Ordering {M}onte {C}arlo {M}arkov chains},
  Tech. Rep. 632, Univ. of Minnesota, 03 1999.

\bibitem{nieder-qmc-1978}
{\sc H.~Niederreiter}, {\em Quasi--{Monte Carlo} methods and pseudo--random
  numbers}, B. Am. Math. Soc., 84 (1978), pp.~957--1041.

\bibitem{NCF2017}
{\sc R.~Norton, J.~Christen, and C.~Fox}, {\em Sampling hyperparameters in
  hierarchical models: Improving on {G}ibbs for high-dimensional latent fields
  and large datasets}, Commun. Stat. Simulat., 47 (2018), pp.~2639--2655.

\bibitem{oconnor-absorber-example-2012}
{\sc P.~O'Connor and A.~Kleyner}, {\em Practical Reliability Engineering},
  Wiley-Blackwell, 5th~ed., 2012.

\bibitem{Os-mvk2-2011}
{\sc I.~Oseledets}, {\em {DMRG} approach to fast linear algebra in the
  {TT}--format}, Comput. Meth. Appl. Math., 11 (2011), pp.~382--393.

\bibitem{osel-tt-2011}
\leavevmode\vrule height 2pt depth -1.6pt width 23pt, {\em Tensor-train
  decomposition}, SIAM J. Sci. Comput., 33 (2011), pp.~2295--2317.

\bibitem{osel-constr-2013}
\leavevmode\vrule height 2pt depth -1.6pt width 23pt, {\em Constructive
  representation of functions in low-rank tensor formats}, Constr. Approx., 37
  (2013), pp.~1--18.

\bibitem{tt-toolbox}
{\sc I.~Oseledets, S.~Dolgov, V.~Kazeev, D.~Savostyanov, O.~Lebedeva,
  P.~Zhlobich, T.~Mach, and L.~Song}, {\em {TT-Toolbox}}, 2011.

\bibitem{ot-ttcross-2010}
{\sc I.~Oseledets and E.~Tyrtyshnikov}, {\em {TT-cross} approximation for
  multidimensional arrays}, Linear Algebra Appl., 432 (2010), pp.~70--88.

\bibitem{RobertsRosenthal2011}
{\sc G.~O. Roberts and J.~S. Rosenthal}, {\em Quantitative non-geometric
  convergence bounds for independence samplers}, Methodol. Comput. Appl.
  Probab., 13 (2011), pp.~391--403.

\bibitem{rosenblatt-1952}
{\sc M.~Rosenblatt}, {\em Remarks on a multivariate transformation}, Ann. Math.
  Stat., 23 (1952), pp.~470--472.

\bibitem{Rue01}
{\sc H.~Rue}, {\em Fast sampling of {G}aussian {M}arkov random fields}, J. Roy.
  Stat. Soc. B, 63 (2001), pp.~325--338.

\bibitem{RueHeld05}
{\sc H.~Rue and L.~Held}, {\em Gaussian {M}arkov random fields: {T}heory and
  applications}, Chapman \& Hall, 2005.

\bibitem{scheichl-qmc-bayes-2017}
{\sc R.~Scheichl, A.~Stuart, and A.~Teckentrup}, {\em {Q}uasi-{M}onte {C}arlo
  and multilevel {M}onte {C}arlo methods for computing posterior expectations
  in elliptic inverse problems}, SIAM/ASA J. Uncertain. Quantif., 5 (2017),
  pp.~493--518.

\bibitem{uschmajew-approx-rate-2013}
{\sc R.~Schneider and A.~Uschmajew}, {\em Approximation rates for the
  hierarchical tensor format in periodic {S}obolev spaces}, J. Complexity, 30
  (2013), pp.~56--71.

\bibitem{smithtierney1996}
{\sc R.~L. Smith and L.~Tierney}, {\em Exact transition probabilities for the
  independence {M}etropolis sampler}, tech. rep., Univ. of North Carolina,
  1996.

\bibitem{stuart-bayes-2010}
{\sc A.~Stuart}, {\em Inverse problems: A {B}ayesian perspective}, Acta Numer.,
  19 (2010), pp.~451--559.

\bibitem{scheichl-mlmc-further-2013}
{\sc A.~Teckentrup, R.~Scheichl, M.~Giles, and E.~Ullmann}, {\em Further
  analysis of multilevel {M}onte {C}arlo methods for elliptic {PDEs} with
  random coefficients}, Numer. Math., 125 (2013), pp.~569--600.

\bibitem{tierney1998}
{\sc L.~Tierney}, {\em A note on {M}etropolis--{H}astings kernels for general
  state spaces}, Ann. Appl. Probab., 8 (1998), pp.~1--9.

\bibitem{tee-tensor-2003}
{\sc E.~Tyrtyshnikov}, {\em Tensor approximations of matrices generated by
  asymptotically smooth functions}, Sbornik: Mathematics, 194 (2003),
  pp.~941--954.

\bibitem{wolff-mcerr-2004}
{\sc U.~Wolff}, {\em Monte {C}arlo errors with less errors}, Comput. Phys.
  Commun., 156 (2004), pp.~143--153.

\end{thebibliography}
\end{document}

IW2
Old IW calibration
log2(N) error_flux    error_prob
10      6.6019e-04    3.8787e-02
12      1.5351e-04    1.0429e-02
14      7.0200e-05    4.8852e-03
15      7.1171e-05    3.4664e-03

11      3.1822e-04    2.1632e-02
13      1.5362e-04    5.5476e-03

IW2
log2(N) error_flux    error_prob
10      6.5649e-04    2.5813e-02
12      1.9215e-04    1.3722e-02
14      6.2819e-05    5.1572e-03